\newtheorem{theorem}{Theorem}
\newtheorem{lemma}[theorem]{Lemma}
\newtheorem{proposition}[theorem]{Proposition}
\newtheorem*{claim}{Claim}
\numberwithin{theorem}{section}
\numberwithin{equation}{section}
\theoremstyle{definition}
\newtheorem{definition}[theorem]{Definition}
\newtheorem{observation}[theorem]{Observation}
\newtheorem{question}[theorem]{Question}
\newtheorem{problem}[theorem]{Problem}
\newtheorem{remark}[theorem]{Remark}
\newcommand{\bn}{\mathbf{n}}
\newcommand{\cnot}{\centernot}
\newcommand{\mb}{\mathbf}
\date{\today}
\keywords{Topological recurrence, measurable recurrence, Bohr topology, upper Banach density, difference set, sumset}
\begin{document}

\title[Single recurrence]{Single recurrence in abelian groups}

\author{John T. Griesmer}
\address{Department of Applied Mathematics and Statistics\\ Colorado School of Mines, Golden, Colorado}
\email{jtgriesmer@gmail.com}

\begin{abstract} We collect problems on recurrence for measure preserving and topological actions of a countable abelian group, considering combinatorial versions of these problems as well. We solve one of these problems by constructing, in $G_{2}:=\bigoplus_{n=1}^{\infty} \mathbb Z/2\mathbb Z$, a set $S$ such that every translate of $S$ is a set of topological recurrence, while $S$ is not a set of measurable recurrence.  This construction answers negatively a variant of the following question asked by several authors: if $A\subset \mathbb Z$ has positive upper Banach density, must $A-A$ contain a Bohr neighborhood of some $n\in \mathbb Z$?

We also solve a variant of a problem posed by the author by constructing, for all $\varepsilon>0$, sets $S, A\subseteq G_{2}$ such that every translate of $S$ is a set of topological recurrence, $d^{*}(A)>1-\varepsilon$, and the sumset $S+A$ is not piecewise syndetic.  Here $d^{*}$ denotes upper Banach density.    \end{abstract}

\maketitle

\section{Introduction}

\subsection{Recurrence in dynamics}

 A set $S\subseteq \mathbb Z$ is a \emph{set of measurable recurrence} if for every measure preserving transformation $T:X\to X$ of a probability measure space $(X,\mu)$  and every $D\subseteq X$ having $\mu(D)>0$, there exists $n\in S$ such that $\mu(D\cap T^{-n}D)>0$.  We say that $S$ is a \emph{set of topological recurrence} if for every minimal topological system $(X,T)$, where $X$ is a compact metric space, and $U\subseteq X$ is a nonempty open set, there exists $n\in S$ such that $U\cap T^{-n}U \neq \varnothing$.  Every set of measurable recurrence is also a set of topological recurrence, since every minimal topological system admits an invariant probability measure of full support.  The concepts of measurable and topological recurrence generalize to actions of abelain groups; see Section \ref{sectionDefinitions} for definitions.

Vitaly Bergelson asked if there is a set $S\subseteq \mathbb Z$ which is a set of topological recurrence but not a set of measurable recurrence, and in \cite{Kriz} Igor K\v{r}\'{i}\v{z} constructed such a set.  Alan Forrest, in his doctoral thesis \cite{ForrestThesis}, produced an analogous example for actions of $G_{2}:= \bigoplus_{n=1}^{\infty} \mathbb Z/2\mathbb Z$.  Our main result, Theorem \ref{theoremMainDynamic}, is a more robust example: we find a set $S\subseteq G_{2}$ such that every translate of $S$ is a set of topological recurrence while $S$ is not a set of measurable recurrence.

If $G$ is an abelian group and $A, B\subseteq G$, $g\in G$, we write $A+B$ for the \emph{sumset} $\{a+b:a\in A, b\in B\}$, $A-B$ for the \emph{difference set} $\{a-b: a\in A, b\in B\}$, and $A+g$ for the \emph{translate} $\{a+g: a\in A\}$.

\begin{theorem}\label{theoremMainDynamic}
  There is a set $S \subseteq G_{2}$ such that for all $g\in G_{2}$, $S+g$ is a set of topological recurrence, while $S$ is not a set of measurable recurrence.
\end{theorem}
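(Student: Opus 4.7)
\emph{Sketch of approach.} Since $G_{2}$ has exponent two, $-g=g$ for every $g \in G_{2}$, so $A-A = A+A$ and translates of difference sets are themselves sumsets. Using the standard characterization of topological recurrence via syndetic sets, the requirement that \emph{every} translate of $S$ be a set of topological recurrence reduces to
\[
A + A + S = G_{2} \qquad \text{for every syndetic } A \subseteq G_{2}.
\]
Meanwhile, failure of measurable recurrence means exhibiting a probability-measure-preserving $G_{2}$-action $(T_{s})_{s \in G_{2}}$ on some $(X,\mu)$ together with a set $D$ of positive measure such that $\mu(T_{s}D \cap D) = 0$ for every nonzero $s\in S$.

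The plan is to adapt the Forrest--K\v{r}\'{i}\v{z} construction of a topologically recurrent set in $G_{2}$ that fails measurable recurrence, refining it so that translation invariance of the recurrence condition is enforced. I would work along the chain of finite-index subgroups $H_{n} := \bigoplus_{k>n}\mathbb Z/2\mathbb Z$ and build $S = \bigsqcup_{n} S_{n}$ inductively, with each $S_{n}$ a finite set supported in a prescribed family of cosets of $H_{n}$. The sets $S_{n}$ are chosen so that (i) they are quantitatively spread across every coset of every $H_{m}$ with $m \le n$, and (ii) their distribution pattern is compatible with a generalized Riesz-product-type invariant measure on $\{0,1\}^{G_{2}}$ that admits a positive-measure cylinder set disjoint from all of its $S$-translates.

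Once $S$ is in hand, the topological condition is verified by observing that any syndetic $A \subseteq G_{2}$ has positive density in every coset of $H_{n}$ for all sufficiently large $n$, so $A + A$ fills a large fraction of each coset of $H_{n}$; combined with the spread property (i), a pigeonhole/counting argument yields $A+A+S = G_{2}$. The non-recurrence side follows from condition (ii) by passing the combinatorial structure of $S$ to the associated shift system, using $D$ as the cylinder provided by the construction.

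The principal obstacle is reconciling (i) and (ii). Forrest's and K\v{r}\'{i}\v{z}'s original constructions place $S$ near the identity coset, which suffices for ordinary topological recurrence but leaves many translates $S+g$ missing the difference sets of certain syndetic sets. Promoting recurrence to \emph{every} translate forces $S$ to meet every coset of every $H_{n}$ with controlled lower density, and this density constraint threatens the sparsity needed to support a Riesz-product measure witnessing failure of measurable recurrence. The delicate quantitative trade-off between the size of $S_{n}$ and its distribution across cosets of $H_{n}$ is where I expect the technical heart of the proof to lie.
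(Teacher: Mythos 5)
Your proposal is a plan rather than a proof, and the plan defers exactly the step that constitutes the theorem: you write that reconciling your conditions (i) and (ii) is ``where I expect the technical heart of the proof to lie,'' but that reconciliation \emph{is} the content of the result. Beyond incompleteness, the method you sketch for the topological side cannot work as described. First, the auxiliary claim is false: a syndetic set need not have positive density in every coset of $H_{n}$ for large $n$ (take $A=H_{1}$, an index-two subgroup, which is syndetic yet disjoint from every coset of $H_{n}$ lying outside $H_{1}$; likewise $A+A=H_{1}$ does not ``fill a large fraction of each coset''). Second, and more fundamentally, no ``pigeonhole/counting'' argument based on the density of $A$ (or of $A+A$) in cosets can establish that every translate of $S$ is a set of topological recurrence: such an argument would only use that $A$ has positive density, and would therefore show that $S$ meets $B-B$ for every $B$ of positive upper Banach density, i.e.\ that $S$ is a set of density (equivalently measurable) recurrence --- contradicting the other half of the theorem you are trying to prove. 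The recurrence of $S$ and its translates is necessarily \emph{chromatic}, not density-based, and the known proofs require a genuinely combinatorial input: in the paper, $S=\bigcup_{i}V(n_{i},k_{i})$ is a union of Hamming balls about $\mathbf{1}$, and Lemma \ref{lemmaChromatic} shows every translate is a set of chromatic recurrence by invoking Lov\'asz's theorem on intersecting families / Kneser graphs (Theorem \ref{thmLovasz}) together with a Poincar\'e-type pigeonhole only for the degenerate case. Nothing in your sketch supplies this ingredient.

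On the non-recurrence side, your ``generalized Riesz-product-type invariant measure'' is not constructed or analyzed, so there is nothing to check; note also that the paper avoids building a measure directly: it constructs, combinatorially, iterated niveau sets $A_{i}(\mathbf n_{l})$ with $d^{*}(A)>\tfrac12-\varepsilon$ and $(A-A)\cap S=\varnothing$ (Lemmas \ref{lemmaProperties}--\ref{lemmaPush}), and only then transfers to a measure-preserving system via the correspondence principle (Lemma \ref{lemmaMeasureCorrespondence}). Your opening reduction (that $(R_{5}^{\bullet})$ amounts to $A+A+S=G_{2}$ for suitable $A$, using $-g=g$ in $G_{2}$) is fine, except that the natural class is piecewise syndetic rather than syndetic sets (the two formulations do coincide here via Remark \ref{remarkPWSdefinition} and Lemma \ref{lemmaPWrecurrence}), but everything after that reduction is either unestablished or rests on the false density claim above.
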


If $(X,\mu,T)$ is a measure preserving $G$-system and $D\subseteq X$ has $\mu(D)>0$, the set $\operatorname{Ret}_{T}(D):=\{g\in G: \mu(D\cap T^{g}D)>0\}$ is of interest.  One way to study such sets is to identify sets of measurable recurrence.  Part (iii) of Lemma \ref{lemmaDifferenceAndRecurrence} and Lemma \ref{lemmaMeasureCorrespondence} show that Theorem \ref{theoremMainDynamic} is equivalent to the following statement: there is an ergodic measure preserving $G_{2}$-system $(X,\mu,T)$ and $D\subseteq X$ with $\mu(D)>0$ such that $\operatorname{Ret}_{T}(D)$ does not contain a set of the form $g+(B-B)$ where $B\subseteq G_{2}$ is piecewise syndetic and $g\in G_{2}$.  See Section \ref{sectionCombinatorial} for the definition of ``piecewise syndetic."

\subsection{Difference sets}  If $G$ is a countable abelian group and $A\subseteq G$, let $d^{*}(A)$ denote the upper Banach density of $A$; see Sections \ref{sectionBohr} and \ref{sectionUBD} for definitions of Bohr neighborhoods and upper Banach density.  A theorem of F{\o}lner \cite{Folner54} states that when $d^{*}(A)>0$, $A-A$ contains a set of the form $U\setminus Z$, where $U$ is a Bohr neighborhood of $0\in G$ and $d^{*}(Z)=0$.  For $G=\mathbb Z$, K\v{r}\'{i}\v{z}'s construction \cite{Kriz} exhibits a set $A$ having $d^{*}(A)>0$ such that $A-A$ does not contain a Bohr neighborhood of $0$, and in fact $A-A$ does not contain a set of the form $S-S$, where $S\subseteq \mathbb Z$ is piecewise syndetic.  Several authors have asked (\cite{BergelsonRuzsa, RuzsaBook, GriesmerIsr, HegyvariRuzsa}) whether $A-A$ must contain a Bohr neighborhood of \emph{some} $n\in \mathbb Z$ whenever $d^{*}(A)>0$, and this question remains open.   For $G_{p}:= \bigoplus_{n=1}^{\infty} \mathbb Z/p\mathbb Z$, where $p$ is prime, the author constructed in \cite{GriesmerBohr}  sets $A\subseteq G_{p}$ having $d^{*}(A)>0$ such that $A-A$ does not contain a Bohr neighborhood of \emph{any} $g\in G_{p}$.

 From a combinatorial perspective, the sets $\operatorname{Ret}_{T}(D)$ are essentially difference sets $A-A$, where $d^{*}(A)>0$ (as shown by Lemma \ref{lemmaMeasureCorrespondence}).  We prove Theorem \ref{theoremMainDynamic} by improving the result of \cite{GriesmerBohr} in the special case $p=2$, with a simpler proof than in \cite{GriesmerBohr}.  The following theorem summarizes the result of our construction.

\begin{theorem}\label{theoremMainDensity}
  For all $\varepsilon>0$, there is a set $A\subseteq G_{2}$ such that $d^{*}(A)>\frac{1}{2}-\varepsilon$, and $A-A$ does not contain a set of the form $B-B+g$, where $B\subseteq G_{2}$ is piecewise syndetic and $g\in G_{2}$.
\end{theorem}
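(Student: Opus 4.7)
I plan to realize $A$ as a large independent set in a Cayley graph $\mathrm{Cay}(G_{2},S)$ for a carefully chosen symmetric $S \subseteq G_{2}\setminus\{0\}$. Specifically, I seek $S$ and $A$ with (i) $(A-A)\cap S = \varnothing$, (ii) $d^{*}(A) > \tfrac{1}{2}-\varepsilon$, and (iii) $(S+g)\cap(B-B)\neq\varnothing$ for every piecewise syndetic $B \subseteq G_{2}$ and every $g \in G_{2}$. These suffice: given $B$ and $g$, pick $h \in (S+g)\cap(B-B)$ via (iii) and write $h = s+g$ with $s \in S$, $h = b_{1}-b_{2}$ with $b_{1},b_{2} \in B$; using $g=-g$ in $G_{2}$, $s = h+g = (b_{1}-b_{2})+g \in B-B+g$, while (i) puts $s \notin A-A$, so $B-B+g \not\subseteq A-A$.

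The combinatorial core is a finitary building block: for each $\delta,\eta > 0$ and $m$ sufficiently large, produce a symmetric $S' \subseteq \mathbb{F}_{2}^{m}\setminus\{0\}$ and $A' \subseteq \mathbb{F}_{2}^{m}$ with $|A'| \geq (\tfrac{1}{2}-\delta)2^{m}$, $(A'-A')\cap S' = \varnothing$, and $(S'+g)\cap(B'-B')\neq\varnothing$ for every $g \in \mathbb{F}_{2}^{m}$ and every $B' \subseteq \mathbb{F}_{2}^{m}$ with $|B'|\geq\eta\cdot 2^{m}$. The threshold $\tfrac{1}{2}$ is sharp, since $|A'|>2^{m-1}$ forces $A'+A' = \mathbb{F}_{2}^{m}$ by pigeonhole, which would intersect any nonempty $S'$. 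A Bogolyubov-type result in characteristic $2$ shows $B'-B'$ contains almost all of a subspace of codimension bounded by a function of $\eta$ alone, reducing the catching property to a quantitative equidistribution requirement on $S'$ over bounded-codimension affine subspaces. I expect $(S',A')$ to be built by an explicit algebraic or probabilistic/deletion argument balancing the near-half-density independent set against this equidistribution requirement.

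For the gluing step, fix a decreasing sequence of finite-index subgroups $H_{k} \subseteq G_{2}$ with $\bigcap_{k} H_{k} = \{0\}$, so each quotient $G_{2}/H_{k}$ is a finite $\mathbb{F}_{2}$-vector space. Apply the finite building block in each quotient with parameters $\delta_{k},\eta_{k}\to 0$ to obtain $\bar{S}_{k}, \bar{A}_{k} \subseteq G_{2}/H_{k}$, and combine via a suitable limit (direct-limit or diagonal) construction to obtain $S \subseteq G_{2}$ and $A \subseteq G_{2}$ satisfying (i), (ii), and (iii). Property (iii) follows because any piecewise syndetic $B$ projects to a set of density at least $\eta_{k}$ on some coset of $H_{k}$ for all sufficiently large $k$, activating the finite catching property at that level. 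The principal obstacle is the finite building block: the near-half-density independent set demands a sparse $S'$, while the catching property demands $S'$ be distributed richly across cosets of every bounded-codimension affine subspace, and reconciling these opposing pressures is the delicate point. The characteristic-$2$ identity $x+x=0$---which equates $B-B$ with $B+B$ and streamlines the pigeonhole and Fourier analysis---is presumably why the construction here is both simpler and quantitatively sharper than its counterpart for odd $p$ in \cite{GriesmerBohr}.
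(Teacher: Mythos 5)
Your top-level reduction is the same as the paper's and is correct: it suffices to produce $S$ and $A$ with $(A-A)\cap S=\varnothing$, $d^{*}(A)>\tfrac12-\varepsilon$, and $(S+g)\cap(B-B)\neq\varnothing$ for every piecewise syndetic $B$ and every $g$. The fatal problem is your finite building block. You ask for $A'\subseteq\mathbb{F}_{2}^{m}$ with $|A'|\geq(\tfrac12-\delta)2^{m}$ and $(A'-A')\cap S'=\varnothing$, while simultaneously requiring $(S'+g)\cap(B'-B')\neq\varnothing$ for \emph{every} $B'$ of density at least $\eta$; since your gluing needs $\delta_{k},\eta_{k}\to 0$, eventually $\eta\leq\tfrac12-\delta$, and then taking $B'=A'$ and $g=0$ makes the second and third requirements contradict each other outright. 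In other words, you are demanding that every translate of $S'$ be a set of $\eta$-density recurrence, which is strictly stronger than what can coexist with a near-half-density avoiding set; the whole point of the construction is that the relevant sets are sets of chromatic (partition) recurrence but are \emph{not} sets of $\delta$-density recurrence for $\delta<\tfrac12$ (properties (V2) and (V1) in Section \ref{sectionHammingBalls}). Relatedly, your gluing step converts piecewise syndeticity of $B\subseteq G_{2}$ only into a density lower bound for its projection to a finite quotient; but positive-density sets that are not piecewise syndetic (indeed the very set $A$ you are building) have equally dense projections, so no argument that sees only densities of projections can separate the two notions --- and that separation is precisely the content of the theorem. The Bogolyubov/F{\o}lner-type step does not rescue this: even granting that $B'-B'$ contains most of a bounded-codimension subspace, the contradiction above is already present before any Fourier analysis.

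The paper keeps your reduction but replaces the density-catching property by a chromatic one, and this changes the construction of both sets. It takes $S=\bigcup_{i}V(n_{i},k_{i})$, the Hamming balls of radius $k_{i}$ about $\mathbf 1$ at scale $n_{i}$, and proves via Lov\'asz's theorem (Theorem \ref{thmLovasz}, through Lemma \ref{lemmaChromatic}) that every translate of $S$ meets $B-B$ whenever $B$ is piecewise syndetic --- a partition statement with no density hypothesis, which is exactly what survives the obstruction above. The companion set $A$ is not a limit of independent sets in quotients but an iterated niveau-set construction $A_{1}((n_{1},m_{1}),\dots,(n_{l},m_{l}))$ with $m_{j}=3k_{j}$, engineered (Lemmas \ref{lemmaProperties}--\ref{lemmaPush}) so that $A+S$ lands inside the corresponding sets centered at $\mathbf 0$, which are disjoint from $A$; the paper explicitly remarks that the naive union $\bigcup_{j}A_{1}(n_{j},m_{j})$ --- the analogue of your diagonal/direct-limit gluing --- fails to satisfy $(A-A)\cap S=\varnothing$, which is exactly where your ``suitable limit construction'' would also break down. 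So both halves of your plan --- the finite block and the gluing --- need to be replaced, not merely filled in.
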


Theorems \ref{theoremMainDynamic} and \ref{theoremMainDensity}  are proved in Sections \ref{sectionGroups} and \ref{sectionProof}, which are mostly self-contained.  The proof of Theorem \ref{theoremMainDensity} is a combinatorial construction requiring no background regarding dynamical systems.


\begin{remark} When $C$ is a Bohr neighborhood of $0$ in a countable abelian group $G$, it contains a difference set $B-B$ where $B$ is syndetic (see Remark \ref{remarkSyndetic}), so the present Theorem \ref{theoremMainDensity} implies the special case of \cite[Theorem 1.2]{GriesmerBohr} where $p=2$.  However, we do not know whether piecewise syndeticity of $B\subseteq G_{2}$ implies $B-B$ contains a Bohr neighborhood -- this is equivalent to Part (i) of Question \ref{questionMain} in the case $G=G_{2}$. So we cannot be  certain that Theorem \ref{theoremMainDensity} is a strict improvement over \cite[Theorem 1.2]{GriesmerBohr}.
\end{remark}

In light of a correspondence principle (see Lemma \ref{lemmaMeasureCorrespondence}), we may also state Theorem \ref{theoremMainDensity} as a more quantitative form of Theorem \ref{theoremMainDynamic}.

\begin{theorem}\label{theoremPreciseDynamic}
  For all $\varepsilon>0$, there is
\begin{enumerate}
     \item[$\bullet$] a measure preserving action $T$ of $G_{2}$ on a probability measure space $(X,\mu)$,
     \item[$\bullet$] a set $D\subseteq X$ such that $\mu(D)>\frac{1}{2}-\varepsilon$, and
      \item[$\bullet$] a set $S\subseteq G_{2}$ such that every translate of $S$ is a set of topological recurrence,
\end{enumerate} such that $D\cap T^{g}D=\varnothing$ for all $g\in  S$.
\end{theorem}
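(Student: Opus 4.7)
The plan is to derive Theorem \ref{theoremPreciseDynamic} as a direct packaging of Theorem \ref{theoremMainDensity} with the correspondence machinery already alluded to in the introduction; the combinatorial work is concentrated in Theorem \ref{theoremMainDensity}, so here I only need to translate its conclusion into dynamical language.

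First, I would apply Theorem \ref{theoremMainDensity} with the given $\varepsilon$ to obtain $A\subseteq G_{2}$ with $d^{*}(A)>\tfrac{1}{2}-\varepsilon$ such that $A-A$ contains no set of the form $B-B+g$ with $B\subseteq G_{2}$ piecewise syndetic and $g\in G_{2}$. I would then set $S:=G_{2}\setminus(A-A)$ and verify that every translate of $S$ is topologically recurrent: for any $h\in G_{2}$ and any piecewise syndetic $B\subseteq G_{2}$, the failure of containment $B-B-h\not\subseteq A-A$ supplied by Theorem \ref{theoremMainDensity} gives $(B-B-h)\cap S\neq\varnothing$, i.e.\ $(S+h)\cap(B-B)\neq\varnothing$. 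Invoking Lemma \ref{lemmaDifferenceAndRecurrence}(iii), which characterizes sets of topological recurrence as those meeting every difference set of a piecewise syndetic set, yields that $S+h$ is a set of topological recurrence for every $h\in G_{2}$.

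Next I would apply Lemma \ref{lemmaMeasureCorrespondence} to $A$ to manufacture a measure preserving $G_{2}$-system $(X,\mu,T)$ and $D\subseteq X$ with $\mu(D)>\tfrac{1}{2}-\varepsilon$ and $D\cap T^{g}D=\varnothing$ for every $g\notin A-A$. Since $S\cap (A-A)=\varnothing$ by definition, this forces $D\cap T^{g}D=\varnothing$ for all $g\in S$, verifying the three bullets of the theorem.

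The only subtle point I anticipate is the distinction between the weak conclusion $\operatorname{Ret}_{T}(D)\subseteq A-A$ (i.e.\ $\mu(D\cap T^{g}D)=0$ off $A-A$) and the literal conclusion $D\cap T^{g}D=\varnothing$ required here. This stronger form should be built into the standard realization of the correspondence, where one takes $X$ to be the orbit closure of $\mathbf{1}_{A}\in\{0,1\}^{G_{2}}$ under the shift and $D$ to be the clopen cylinder $\{x\in X:x(0)=1\}$: if $x\in D\cap T^{g}D$, then $x(0)=x(g)=1$, and since $x$ is approximated by translates $T^{h_{n}}\mathbf{1}_{A}$ with convergence in $\{0,1\}$ being eventual equality, one must have $h_{n},h_{n}+g\in A$ eventually, forcing $g\in A-A$. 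Granting that Lemma \ref{lemmaMeasureCorrespondence} is phrased to deliver exactly this pointwise emptiness — which is the natural statement for a symbolic correspondence principle — the theorem follows with no further work.
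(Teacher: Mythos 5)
Your proposal is correct and is in essence the paper's own derivation: combine the output of Theorem \ref{theoremMainDensity} with the correspondence principle, Lemma \ref{lemmaMeasureCorrespondence}. The only structural difference is that the paper reuses the explicit pair from the proof of Theorem \ref{theoremMainDensity} (the union of Hamming balls $S=\bigcup_{l} V(n_{l},k_{l})$, for which translate-by-translate topological recurrence was already established in Lemma \ref{lemmaChromatic}, together with $(A-A)\cap S=\varnothing$), whereas you work from the statement of Theorem \ref{theoremMainDensity} alone and take $S:=G_{2}\setminus(A-A)$, recovering topological recurrence of every translate combinatorially; both routes are fine, and yours has the small advantage of being a black-box deduction. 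Two minor corrections: the fact you need in that step is not Lemma \ref{lemmaDifferenceAndRecurrence}(iii) but Lemma \ref{lemmaPWrecurrence} together with Proposition \ref{propositionCorrespondence}(ii). And Lemma \ref{lemmaMeasureCorrespondence} as stated only yields $\mu(D\cap T^{g}D)=0$ for $g\in S$ rather than literal disjointness; your symbolic-model argument (orbit closure of $1_{A}$ with the cylinder $D$) does repair this, and alternatively one can delete the invariant null set $\bigcup_{h\in G_{2}}T^{h}\bigl(\bigcup_{g\in S}(D\cap T^{g}D)\bigr)$ from $D$ — a wrinkle the paper's own one-line proof glosses over as well.
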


\subsection{Sumsets}

If $G$ is a countable abelian group and $A, B \subseteq G$ have positive upper Banach density, then $A+B$ is piecewise syndetic.  For $G=\mathbb Z$, this result is due to Renling Jin \cite{JinSP}.  Bergelson, Furstenberg, and Weiss \cite{BFW} strengthened the conclusion from ``piecewise syndetic" to ``piecewise Bohr" (see Section \ref{sectionPWBohr}). In \cite{GriesmerIsr} and \cite{GriesmerThesis}, the author generalized these results to cases where $d^{*}(A)=0$ and $d^{*}(B)>0$, and these results have been generalized to other settings -- see  \cite{BeiglbockUltrafilter,BBF,BjorklundFish,Dinasso,DinassoEtAl}.  The proofs and examples in \cite{GriesmerIsr} raised the following question, stated for $G=\mathbb Z$ as \cite[Question 5.1]{GriesmerIsr}.

\begin{question}\label{questionIsr}  Let $G$ be a countable abelian group and $S\subseteq G$. Let $\tilde{S}$  be the closure of $S$ in $bG$, the Bohr compactification\footnote{We will not use the Bohr compactification in this article except in reference to Question \ref{questionIsr} -- see Section \ref{sectionBohrCompactification} for a brief discussion.} of $G$.  Let $m_{bG}$ denote Haar measure in $bG$.  Which, if any, of the following implications are true?
\begin{enumerate}
  \item[(1)] If $m_{bG}(\tilde{S}) > 0$ and $d^{*}(A)>0$, then $S+A$ is piecewise syndetic.

  \item[(2)] If $m_{bG}(\tilde{S}) > 0$ and $d^{*}(A)>0$, then $S+A$ is piecewise Bohr.

  \item[(3)] If $S$ is dense in the Bohr topology of $G$ and $d^{*}(A)>0$, then $d^{*}(S+A)=1$.
\end{enumerate}
\end{question}

Theorem 1.4 of \cite{GriesmerBohr} provides counterexamples to implications (2) and (3) when $G=G_{p}:=\bigoplus_{n=1}^{\infty} \mathbb Z/p\mathbb Z$ for some prime $p$.  The construction in the proof of the present Theorem \ref{theoremMainDensity} provides counterexamples to all three implications for $G=G_{2}$, resulting in the following theorem.  All parts of Question \ref{questionIsr} remain open for all countably infinite abelian groups besides $G_{p}$ for some prime $p$.

\begin{theorem}\label{theoremPWsynd}
  For all $\varepsilon>0$, there are sets $S, A\subseteq G_{2}:=\bigoplus_{n=1}^{\infty} \mathbb Z/2\mathbb Z$ such that $d^{*}(A)> 1-\varepsilon$, every translate of $S$ is a set of topological recurrence, and $S+A$ is not piecewise syndetic.
\end{theorem}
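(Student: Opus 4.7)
The plan is to derive Theorem \ref{theoremPWsynd} from the construction behind Theorem \ref{theoremMainDensity}; the paragraph preceding the theorem statement makes explicit that the same construction serves both. First I would apply Theorem \ref{theoremMainDensity} with $\varepsilon$ replaced by a smaller parameter $\delta > 0$ to obtain $A_0 \subseteq G_2$ with $d^*(A_0) > \tfrac{1}{2} - \delta$ and the property that $A_0 - A_0 \not\supseteq B - B + g$ for every piecewise syndetic $B \subseteq G_2$ and every $g \in G_2$. Set $S := G_2 \setminus (A_0 - A_0)$. Every translate $S + g$ then intersects $B - B$ for each piecewise syndetic (in particular, syndetic) $B$, and by Lemma \ref{lemmaDifferenceAndRecurrence}—which identifies such difference sets with the return-time sets of nonempty open subsets of minimal $G_2$-systems—this means every translate of $S$ is a set of topological recurrence. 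In addition, the elementary identity $(S + A_0) \cap A_0 = \varnothing$ holds: if $s + a = a'$ with $s \in S$ and $a, a' \in A_0$, then $s = a' - a \in A_0 - A_0$, contradicting $s \in S$.

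Next, to obtain a set $A$ with $d^*(A) > 1 - \varepsilon$ and $S + A$ not piecewise syndetic, I would exploit the coset structure of $G_2$ used in the construction of $A_0$. In Section \ref{sectionProof}, $A_0$ is built within a union of cosets of a finite-index subgroup $H \leq G_2$, and the non-containment property of $A_0 - A_0$ is controlled at the level of these cosets. The set $A$ would then be defined as $A_0 \cup R$, where $R$ is a carefully selected union of cosets of a subgroup $H' \leq G_2$ disjoint from the ``support region'' of $S + A_0$, large enough to push $d^*(A)$ above $1 - \varepsilon$. One then verifies that $S + A = (S + A_0) \cup (S + R)$ stays outside a syndetic set: for every finite $F \subseteq G_2$, the complement $(S + A + F)^c$ still contains infinitely many translates of some coset of $H$, showing it is syndetic, hence $S + A$ is not piecewise syndetic.

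The main obstacle is balancing the density boost (needing $A$ very close to all of $G_2$) against the structural restriction (needing $S + A$ to avoid a syndetic set). A naive enlargement of $A_0$ tends to make $S + A$ equal to all of $G_2$, since $S$ is itself ``large'' ($d^*(S)$ can be close to $\tfrac{1}{2} + \delta$). Overcoming this requires the specific multi-scale coset structure of the construction in Section \ref{sectionProof}: the set $S$ inherits a compatible coset structure from $A_0$, and $A$ must be chosen so that the additional mass $R$ is ``orthogonal'' to $S$ in the coset sense, i.e., so that $S + R$ does not fill the coset complement of $S + A_0$. Handling this coordination is the main technical content, and leans on the direct-sum structure $G_2 = \bigoplus_{n=1}^\infty \mathbb Z/2 \mathbb Z$, which permits selecting $H'$ of very large index transverse to $H$.
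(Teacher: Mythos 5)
Your first step is fine: taking $S:=G_{2}\setminus(A_{0}-A_{0})$ does make every translate of $S$ meet $B-B$ for every piecewise syndetic $B$, hence every translate of $S$ is a set of chromatic (so topological) recurrence by Lemma \ref{lemmaPWrecurrence} and Proposition \ref{propositionCorrespondence}. But the heart of the theorem is missing. The observation $(S+A_{0})\cap A_{0}=\varnothing$ only says that $S+A_{0}$ omits a set of upper Banach density about $\tfrac12$, which is perfectly compatible with $S+A_{0}$ being piecewise syndetic, or even syndetic; you never prove that $S+A_{0}$ (let alone $S+A$) fails to be piecewise syndetic, and this cannot be extracted from Theorem \ref{theoremMainDensity} used as a black box. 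The paper obtains non-piecewise-syndeticity only for the structured set $S=\bigcup_{l}V(n_{l},k_{l})$, by proving the stronger disjointness $[(A+S)-(A+S)]\cap S=\varnothing$ (Equation \eqref{equationDiff2}): with $m_{j}=3k_{j}$, Lemma \ref{lemmaPush} gives $A+S\subseteq A':=\bigcup_{l}A_{0}((n_{1},m_{1}-k_{1}),\dots,(n_{l},m_{l}-k_{l}))$, and Lemma \ref{lemmaProperties}(iv) together with Lemma \ref{lemmaMonotone} gives $(A'-A')\cap S=\varnothing$; since $S$ meets $C-C$ for every piecewise syndetic $C$, this forces $A+S$ not to be piecewise syndetic. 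Enlarging $S$ to the full complement of $A_{0}-A_{0}$ destroys exactly the structure this argument uses.

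The proposed density boost also fails, for two reasons. First, it rests on a misreading of Section \ref{sectionProof}: the niveau sets $A_{i}(\bn_{l})$ are not built inside cosets of a finite-index subgroup $H$, and no such $H$ controls the non-containment property. Second, and decisively, since every translate of $S$ is a set of topological (hence Bohr) recurrence, $S$ is dense in the Bohr topology of $G_{2}$ (Lemma \ref{lemmaExpanding}), so $S$ meets every coset of every finite-index subgroup $H'$; consequently, if $R$ contains a coset $c+H'$, then $S+R$ contains $s+c+H'$ for some $s\in S$, a syndetic set, and $S+A\supseteq S+R$ would automatically be piecewise syndetic no matter how "transverse" $H'$ is chosen (and cosets of infinite-index subgroups have upper Banach density $0$, so they cannot supply the needed density either). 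The paper's boost avoids all of this: it sets $E:=A\cup(A+\mathbf 1)$, notes $\mathbf 1\in S$ and $(A-A)\cap S=\varnothing$ so the union is disjoint, gets $d^{*}(E)>1-2\varepsilon$ from Lemma \ref{lemAdditivity}, and concludes that $E+S=(A+S)\cup(A+S+\mathbf 1)$ is not piecewise syndetic from translation invariance and partition regularity of piecewise syndeticity (Lemma \ref{lemmaPWPartitionRegular}).
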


We now explain why Theorems \ref{theoremMainDynamic} and \ref{theoremPreciseDynamic} let us resolve Question \ref{questionIsr} for $G=G_{2}$.  This explanation uses the notion of ``Bohr recurrence" from Definition \ref{defRecurrence}.

Lemma \ref{lemmaExpanding}, together with the straightforward implication ``($S$ is a set of topological recurrence) $\implies$ ($S$ is a set of Bohr recurrence)"  shows that if every translate of $S$ is a set of topological recurrence, then $S$ is dense in the Bohr topology of $G_{2}$. The condition that $S$ is dense in the Bohr topology of $G_{2}$ implies $\tilde{S}=bG_{2}$ in the notation of Question \ref{questionIsr}.  In particular, $m_{bG_{2}}(\tilde{S}) = 1$, so Theorem \ref{theoremPWsynd} provides counterexamples to all parts of Question \ref{questionIsr} for $G= G_{2}$.

The next section surveys some notions of recurrence for dynamical systems and formulates some questions related to Theorems \ref{theoremMainDynamic} and \ref{theoremMainDensity}.  Section \ref{sectionGroups} introduces the definitions and notation needed for our constructions, and Section \ref{sectionProof} contains the proofs of Theorems \ref{theoremMainDynamic}, \ref{theoremMainDensity}, \ref{theoremPreciseDynamic}, and \ref{theoremPWsynd}.  Section \ref{sectionAppendix} contains some standard lemmas needed to relate statements about difference sets to statements about dynamical systems.   Section \ref{sectionAppendix} contains some standard lemmas needed to keep the article self-contained.  These are mostly for exposition, with the exception of Lemma \ref{lemmaMeasureCorrespondence}, which is used only to derive Theorems \ref{theoremMainDynamic} and \ref{theoremPreciseDynamic} from the proof of Theorem \ref{theoremMainDensity}.  Sections \ref{sectionGroups} and \ref{sectionProof} can be read essentially independently of the others.

\section{Two hierarchies of single recurrence properties}\label{sectionQuestions}

\subsection{Measurable, topological, and Bohr recurrence}\label{sectionDefinitions} We begin with some standard definitions.

Let $G$ be a countable abelian group.

A \emph{measure preserving $G$-system} (or briefly, ``$G$-system") is a triple $(X,\mu,T)$, where $(X,\mu)$ is a probability measure space and $T$ is an action of $G$ on $X$ preserving $\mu$, meaning $\mu(T^{g}D) = \mu(D)$ for every measurable $D\subseteq X$ and $g\in G$.  We say that $(X,\mu,T)$ is \emph{ergodic} if for all measurable $D \subseteq X$ satisfying $\mu(D\triangle T^{g}D)=0$ for all $g\in G$, $\mu(D)=0$ or $\mu(D)=1$. 

A \emph{topological $G$-system} is a pair $(X,T)$, where $X$ is a compact metric space and $T$ is an action of $G$ on $X$ by homeomorphisms.  We say that $(X,T)$ is \emph{minimal} if for all $x\in X$, the orbit $\{T^{g}x:g\in G\}$ is dense in $X$.

A \emph{group rotation $G$-system} is a pair $(Z,R_{\rho})$ where $Z$ is a compact abelian group, $\rho: G\to Z$ is a homomorphism, and $R_{\rho}^{g}z = z + \rho(g)$ for $z\in Z$ and $g\in G$.  Such a system $(Z,R_{\rho})$ may be considered as a topological $G$-system, or as a measure preserving $G$-system $(Z,m,R_{\rho})$, where $m$ is normalized Haar measure on $Z$.  The topological $G$-system $(Z,R_{\rho})$ is minimal if and only if the measure preserving $G$-system $(Z,m,R_{\rho})$ is ergodic if and only if $\rho(G)$ is dense in $Z$.

\begin{definition}\label{defRecurrence}
We say that a set $S\subseteq G$ is a

\begin{enumerate}
  \item[$\bullet$]   \emph{set of measurable recurrence} if for all measure preserving $G$-systems $(X,\mu,T)$ and every $D\subseteq X$ having $\mu(D)>0$, there exists $g\in S$ such that $\mu(D\cap T^{g}D)>0$.

  \item[$\bullet$] \emph{set of strong recurrence} if for all measure preserving $G$-systems $(X,\mu,T)$ and every $D\subseteq X$ having $\mu(D)>0$, there exists $c>0$ such that $\{g\in S: \mu(D\cap T^{g}D)>c\}$ is infinite.

  \item[$\bullet$]   \emph{set of optimal recurrence} if for all measure preserving $G$-systems $(X,\mu,T)$, every measurable $D\subseteq X$ and $c<\mu(D)^{2}$, there is a $g\in S$ such that $\mu(D\cap T^{g}D)> c$.

  \item[$\bullet$] \emph{set of topological recurrence} if for every minimal topological $G$-system $(X,T)$ and every open nonempty $U\subseteq X$, there exists $g\in S$ such that $U\cap T^{g}U \neq \varnothing$.

  \item[$\bullet$] \emph{set of Bohr recurrence} if for every minimal group rotation $G$-system $(Z,R_{\rho})$ and every open nonempty $U\subseteq Z$, there exists $g\in S$ such that $U\cap R_{\rho}^{g}U \neq \varnothing$.
\end{enumerate}
Let $\mathcal S^{1}$ be the group $\{z\in \mathbb C: |z|=1\}$, the complex numbers of modulus $1$ with the group operation of multiplication. A set $S\subseteq G$ is \emph{equidistributed} if there is a sequence of finite subsets $S_{j} \subseteq S$ such that for every non-constant homomorphism $\chi: G\to \mathcal S^{1}$, $\lim_{j\to \infty} \frac{1}{|S_{j}|} \sum_{g\in S_{j}} \chi(g) = 0$. \end{definition}

We abbreviate the above definitions in the following conditions.

\begin{enumerate}
  \item[($R_{1}$)]  $S$ is equidistributed.

  \item[($R_{2}$)] $S$ is a set of optimal recurrence.

  \item[($R_{3}$)]  $S$ is a set of strong recurrence.

  \item[($R_{4}$)] $S$ is a set of measurable recurrence.

  \item[($R_{5}$)] $S$ is a set of topological recurrence.

  \item[($R_{6}$)] $S$ is a set of Bohr recurrence.
\end{enumerate}

We have  $(R_{i}) \implies (R_{i+1})$ for $i=1,\dots, 5$.  These implications are well known -- see Section \ref{sectionImplications} for a proof of $(R_{1}) \implies (R_{2})$ and further discussion.  We briefly summarize what is known regarding the reverse implications $(R_{j}) \implies (R_{i})$ for $j>i$. Here ``group" means ``countably infinite abelian group" and $G_{p}$ denotes $\bigoplus_{n=1}^{\infty} \mathbb Z/p\mathbb Z$.

The question of whether $(R_{6}) \implies (R_{5})$ is well known and remains open for every group $G$. The question was first explicitly asked in this form by Katznelson \cite{KatznelsonChromatic} for $G=\mathbb Z$, but the question is older -- see \cite{Weiss}, as well as \cite{GlasnerPolish} and \cite{BoshernitzanGlasner} for exposition and a related problem.

For $G=\mathbb Z$, K\v{r}\'{i}\v{z} \cite{Kriz} proved that $(R_{5})\cnot\implies (R_{4})$, and Forrest \cite{ForrestThesis} adapted K\v{r}\'{i}\v{z}'s example to $G= G_{2}$.  Randall McCutcheon \cite{McCutcheonAlexandria, McCutcheonBook} presented a simplification of K\v{r}\'{i}\v{z}'s example due to Imre Ruzsa.   Our proof of Theorem \ref{theoremMainDynamic} provides another proof that $(R_{5})\cnot\implies(R_{4})$ for $G=G_{2}$. Whether $(R_{5})\implies (R_{4})$ is open for all groups $G$ besides $G_{2}$ and $\mathbb Z$.

For $G=\mathbb Z$ and $G= G_{2}$, Forrest \cite{ForrestPaper} proved that $(R_{4})\cnot\implies (R_{3})$ and $(R_{3})\cnot\implies (R_{2})$.  For $G=\mathbb Z$, McCutcheon \cite{McCutcheonAlexandria} provides a simplification of Forrest's construction and \cite{GriesmerRRPD} provides another proof of $(R_{4})\cnot\implies (R_{3})$.  The status of the implications $(R_{4}) \implies (R_{3})$ and $(R_{3}) \implies (R_{2})$ is unkown for all other groups $G$.

For $G= \mathbb Z$, the classical example $S=\{2n: n\in \mathbb Z\}$ shows that $(R_{2}) \cnot\implies (R_{1})$.  Constructing examples showing that $(R_{2})\cnot\implies (R_{1})$ for an arbitrary countably infinite abelian group is not difficult, but it makes an interesting exercise.

\subsection{Translations}\label{sectionTranslations} If $G$ is a countable abelian group and $S\subseteq G$, we say that $S$ satisfies property $(R_{i}^{\bullet})$ if every translate of $S$ satisfies property $(R_{i})$, meaning $S+g$ satisfies $(R_{i})$ for all $g\in G$.  It is easy to verify that $(R_{1}^{\bullet}) \iff (R_{1})$, while $(R_{i}) \cnot\implies (R_{i}^{\bullet})$ for each $i>1$ and every nontrivial group $G$.  Observe that $(R_{i}^{\bullet}) \implies (R_{j})$ if and only if $(R_{i}^{\bullet}) \implies (R_{j}^{\bullet})$.

We say that $S$ satisfies property $(PR_{i})$ if $S\setminus \{0\}$ satisfies property $(R_{i})$.

We summarize what is currently known regarding the implications $(R_{i}^{\bullet}) \implies (R_{j})$.

For every countable abelian group $G$, $(R_{6}^{\bullet})\cnot\implies (R_{1})$.  For $G=\mathbb Z$ this is due to Katznelson \cite[Theorem 2.2]{KatznelsonStockholm}, and for general $G$ to Saeki \cite{Saeki}, by way of constructing sets $S\subseteq G$ dense in the Bohr topology that do not satisfy $(R_{1})$.  See Lemma \ref{lemmaExpanding} for a proof that such constructions prove $(R_{6}^{\bullet})\cnot\implies (R_{1})$.

For $G=\mathbb Z$, whether the implication $(R_{6}^{\bullet}) \implies (R_{4})$ holds has been asked\footnote{In \cite{BergelsonRuzsa}, \cite{RuzsaBook}, and \cite{HegyvariRuzsa}, the question is phrased as ``If $A\subseteq \mathbb Z$ has positive upper Banach density, must $A-A$ contain a Bohr neighborhood of some $n\in \mathbb Z$?"  We discuss this form of the question in Section \ref{sectionCombinatorial}.} in \cite{BergelsonRuzsa}, \cite{RuzsaBook}, \cite{GriesmerIsr}, and \cite{HegyvariRuzsa}.  The problem remains stubbornly open. For $G_{p}=\bigoplus_{n=1}^{\infty} \mathbb Z/p \mathbb Z$, where $p$ is prime, the author  proved in \cite{GriesmerBohr} that $(R_{6}^{\bullet})\cnot\implies (R_{4})$ by exhibiting a set $A\subseteq G_{p}$ having $d^{*}(A)>0$ such that $A-A$ does not contain a Bohr neighborhood of any $g\in G_{p}$.  See Lemma \ref{lemmaDifferenceAndRecurrence} for an explanation of why said construction implies $(R_{6}^{\bullet})\cnot\implies (R_{4})$.

For $G_{2} = \bigoplus_{n=1}^{\infty} \mathbb Z/2\mathbb Z$, Theorem \ref{theoremMainDynamic} of the present article says that $(R_{5}^{\bullet}) \cnot\implies (R_{4})$.

For $G = \mathbb Z$, the main result of \cite{GriesmerRRPD} shows that $(R_{4}^{\bullet}) \cnot\implies (R_{3})$, and in fact there is a set $S\subseteq \mathbb Z$ satisfying $(R_{4}^{\bullet})$ such that no translate of $S$ satisfies $(R_{3})$.

The implications listed above are all that are currently known, so the following questions remain.  A priori, the answer to each part could be different for different groups $G$.

\begin{question}\label{questionMain}  \begin{enumerate}
   \item[(i)]  Does $(R_{6}^{\bullet}) \implies (R_{5})$?   (Open for every $G$.)

  \item[(ii)]  Does $(R_{6}^{\bullet}) \implies (R_{4})$?  (Open for every $G$ except $G_{p}$ where $p$ is prime.)

  \item[(iii)]   Does $(R_{5}^{\bullet}) \implies (R_{4})$?  (Open for every $G$ except $G_{2}$.)

  \item[(iv)]  Does  $(R_{4}^{\bullet}) \implies (R_{3})$?  (Open for every $G$ except $\mathbb Z$.)

  \item[(v)]  Does $(R_{3}^{\bullet}) \implies (R_{2})$?  (Open for every $G$.)

  \item[(vi)]  Does $(R_{2}^{\bullet}) \implies (R_{1})$?  (Open for every $G$.)
 \end{enumerate}
Furthermore, in case $(R_{i}^{\bullet}) \cnot\implies (R_{j})$, is there a set $S\subseteq G$ satisfying $(R_{i}^{\bullet})$ while no translate of $S$ satisfies $(PR_{j})$?
\end{question}

We expect that for every  $G$, the answers to parts (ii) through (vi) are all ``no." We reserve speculation on part (i), as a negative answer would resolve the difficult question of whether $(R_{6})\implies (R_{5})$, while a positive answer would be surprising.

If the answer to a given part does not depend on the group $G$, it would be interesting to identify a general principle which implies that the answer must be the same for every $G$.

Finally, while our Theorem \ref{theoremMainDynamic} shows that $(R_{5}^{\bullet}) \cnot\implies(R_{4})$ for $G=G_{2}$, our proof does not provide an example of a set $S$ satisfying $(R_{5}^{\bullet})$ such that no translate of $S$ satisfies $(PR_{4})$.  In fact, for the set $S$ constructed in the proof, Lemma \ref{lemmaPoincare}  implies $S+\mathbf{1}$ satisfies $(PR_{4})$ -- see Section \ref{sectionGroups} for notation.

As mentioned at the beginning of this subsection, $(R_{6}^{\bullet}) \cnot\implies (R_{1})$ is established for all countable abelian groups $G$ in \cite{Saeki}, so Theorem \ref{theoremMainDynamic} is a refinement of that result for the group $G_{2}$.

\subsection{Syndeticity and piecewise syndeticity}\label{sectionCombinatorial}
In this and the following subsections we formulate some definitions needed to interpret Question \ref{questionMain} in terms of difference sets.  We fix a countable abelian group $G$ for the remainder of this section.
\begin{definition}\label{definitionSyndetic}
 A set $A\subseteq G$ is

  \begin{enumerate}
    \item[$\bullet$] \emph{thick} if for every finite $F\subseteq G$, there exists $g\in G$ such that $F+g \subseteq A$,
    \item[$\bullet$]  \emph{syndetic} if there is a finite set $F\subseteq G$ such that $A+F = G$, meaning $G$ is a union of finitely many translates of $A$,
    \item[$\bullet$] \emph{piecewise syndetic} if there is a syndetic set $S\subseteq G$ such that for all finite $F\subseteq S$, there exists $g\in G$ such that $F+g\subseteq A$.
 \end{enumerate}

  \end{definition}

\begin{remark}\label{remarkPWSdefinition}
 Our definition of ``piecewise syndetic" is not standard, and Lemma \ref{lemmaPWSequivalents} shows that it is equivalent to the standard one.  We use our definition so that it is easy to see that if $A$ is piecewise syndetic, then $A-A$ contains a set of the form $S-S$, where $S$ is syndetic.
\end{remark}

\begin{remark}
 A set $A\subseteq G$ is thick if and only if $d^{*}(A)=1$.
\end{remark}
\subsection{The Bohr topology}\label{sectionBohr}  If $G$ is a countable abelian group, the Bohr topology on $G$ is the weakest topology on $G$ such that every homomorphism $\rho: G\to \mathbb T$ is continuous, where $\mathbb T=\mathbb R/\mathbb Z$ with the usual topology.  A set $S\subseteq G$ satisfies $(R_{6})$ if and only if $0$ is in the closure of $S$ in the Bohr topology, and $S$ satisfies $(R_{6}^{\bullet})$ if and only if $S$ is dense in the Bohr topology -- see Lemmas \ref{lemmaBohr} and \ref{lemmaExpanding} for proofs.  The open sets of the Bohr topology are called \emph{Bohr neighborhoods}.  The group operation and inversion are both continuous in the Bohr topology, so $U$ is a Bohr neighborhood of $g\in G$ if and only if $U-g$ is a Bohr neighborhood of $0$.  See \cite{RudinFourier} for exposition of the Bohr topology of locally compact abelian groups, including countable discrete groups.

The Bohr topology is the weakest topology making every homomorphism $\rho:G\to Z$ to a compact abelian group $Z$ continuous.

A neighborhood base for $0$ in the Bohr topology on $\mathbb Z$ is the collection of sets of the form $\{n: \max_{1\leq i \leq d} \|s_{i}n\| < \varepsilon\}$, where $s_{i}\in \mathbb R$, $\varepsilon>0$, and $\|x\|$ denotes the distance from $x$ to the nearest integer.

For a fixed prime number $p$, a neighborhood base for $0$ in the Bohr topology on $G_{p}:=\bigoplus_{n=1}^{\infty} \mathbb Z/p\mathbb Z$ is the collection of finite index subgroups of $G_{p}$.  A set $U\subseteq G_{p}$ is open in the Bohr topology if and only if it is a union of cosets of finite index subgroups of $G_{p}$.

\subsection{Piecewise Bohr sets}\label{sectionPWBohr} A set $S\subseteq G$ is \emph{piecewise Bohr} if there is a nonempty Bohr neighborhood $U\subseteq G$ such that for every finite $F\subseteq U$, there is a $g\in G$ such that $g+F\subseteq S$.  Equivalently, a set $S\subseteq G$ is piecewise Bohr if there is a nonempty Bohr neighborhood $U$ of some $g$ and a thick set $C$ such that $U\cap C \subseteq S$ (see Definition \ref{definitionSyndetic}).  While we do not need the equivalence of these definitions, a proof can be found in \cite{GriesmerIsr} for the case $G=\mathbb Z$.

\begin{remark}\label{remarkSyndetic}
  Bohr neighborhoods are syndetic, due to the compactness of $\mathbb T^{d}$ for each $d$, and every Bohr neighborhood of $0$ contains a difference set $A-A$ where $A$ is a Bohr neighborhood of $0$.  It follows that if $A$ is piecewise Bohr, then $A-A$ contains a set of the form $S-S$, where $S$ is syndetic.
\end{remark}

\subsection{Bohr compactification}\label{sectionBohrCompactification}  The Bohr compactification $bG$ of a locally compact abelian group $G$ is the unique compact abelian group $H$ such that $G$ embeds densely in $H$ (so that $G$ may be identified with a subgroup $\tilde{G}$ of $H$) and every continuous homomorphism to a compact abelian group $\rho:G\to Z$ has a unique continuous extension $\tilde{\rho}:H\to Z$.   The Bohr topology on $G$ is the topology $G$ inherits from $bG$ as a subspace.  We will not use the Bohr compactification in this article, but Question \ref{questionIsr} is reproduced from \cite{GriesmerIsr}, where it is mentioned.  See \cite{RudinFourier} for further exposition of $bG$.

\subsection{Upper Banach density}\label{sectionUBD}  A \emph{F{\o}lner sequence} for an abelian group $G$ is a sequence of finite subsets $\Phi_{n}\subseteq G$ such that $\lim_{n\to \infty} \frac{|(\Phi_{n}+g)\cap \Phi_{n}|}{|\Phi_{n}|}=1$ for every $g\in G$.  Every countable abelian group admits a F{\o}lner sequence.\footnote{The standard way to see this is to appeal to some theory of  amenable groups: a countable discrete group is amenable if and only if it admits a F{\o}lner sequence, and abelian groups are amenable by the Markov-Kakutani fixed point theorem.  However, given a specific abelian group, it is usually possible to construct a F{\o}lner sequence by hand.}

If $\mathbf \Phi = (\Phi_{n})_{n\in \mathbb N}$ is a F{\o}lner sequence for $G$ and $A\subseteq G$,  the \emph{upper density of $A$ with respect to $\mathbf{\Phi}$} is $\bar{d}_{\mathbf{\Phi}}(A):=\limsup_{n\to \infty}\frac{|A\cap \Phi_{n}|}{|\Phi_{n}|}$; we write $d_{\mathbf{\Phi}}(A)$ if the limit exists.  The \emph{upper Banach density} of $A$ is $d^{*}(A):=\sup\{\bar{d}_{\mathbf{\Phi}}(A): \mathbf{\Phi} \text{ is a F{\o}lner sequence}\}$.  Note that for every $A\subseteq G$, there is a F{\o}lner sequence $\mathbf{\Phi}$ such that $d^{*}(A)=d_{\mathbf{\Phi}}(A)$.

If $(H_{n})_{n\in \mathbb N}$ is an increasing sequence of finite subgroups of $G$ such that $G= \bigcup_{n=1}^{\infty} H_{n}$, then $(H_{n})_{n\in \mathbb N}$ is a F{\o}lner sequence for $G$.

While upper Banach density is not finitely additive, it enjoys the following weaker property.

\begin{lemma}\label{lemAdditivity}
  Let $G$ be a countable abelian group, $g\in G$, and $A\subseteq G$. If $A\cap (A+g)=\varnothing$, then $d^{*}(A\cap (A+g))=2d^{*}(A)$.
\end{lemma}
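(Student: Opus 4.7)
\emph{Comment on the statement as printed.} The disjointness hypothesis $A \cap (A+g) = \varnothing$ makes $d^{*}(A \cap (A+g)) = d^{*}(\varnothing) = 0$, so the conclusion as written reduces to $0 = 2 d^{*}(A)$, which fails whenever $d^{*}(A) > 0$. The ``$\cap$'' in the conclusion is evidently a typo for ``$\cup$,'' and the target I will prove is
\[d^{*}(A \cup (A+g)) = 2 d^{*}(A) \quad \text{whenever } A \cap (A+g) = \varnothing.\]
This is the identity actually needed later in the paper, since it is what lets one pair a set $A$ with a disjoint translate $A+g$ to double its density.

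For the upper bound, subadditivity of $d^{*}$ and translation invariance are both immediate from the definition. For any F{\o}lner sequence $\mathbf{\Phi}$ and any $B, C \subseteq G$, $\bar{d}_{\mathbf{\Phi}}(B \cup C) \leq \bar{d}_{\mathbf{\Phi}}(B) + \bar{d}_{\mathbf{\Phi}}(C)$, and taking the supremum over $\mathbf{\Phi}$ yields $d^{*}(B \cup C) \leq d^{*}(B) + d^{*}(C)$. Moreover, $|A \cap (\Phi_{n} - g)|$ and $|A \cap \Phi_{n}|$ differ by at most $|\Phi_{n} \triangle (\Phi_{n}-g)| = o(|\Phi_{n}|)$, so $\bar{d}_{\mathbf{\Phi}}(A+g) = \bar{d}_{\mathbf{\Phi}}(A)$ for every F{\o}lner sequence, whence $d^{*}(A+g) = d^{*}(A)$. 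Combining gives $d^{*}(A \cup (A+g)) \leq d^{*}(A) + d^{*}(A+g) = 2 d^{*}(A)$.

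For the matching lower bound, fix a F{\o}lner sequence $\mathbf{\Phi} = (\Phi_{n})$ with $d_{\mathbf{\Phi}}(A) = d^{*}(A)$; such a sequence exists by the remark immediately preceding the lemma, and one may pass to a subsequence to replace $\limsup$ by $\lim$. The same F{\o}lner estimate gives $|A \cap (\Phi_{n}-g)|/|\Phi_{n}| \to d^{*}(A)$, and the disjointness hypothesis yields
\[\frac{|(A \cup (A+g)) \cap \Phi_{n}|}{|\Phi_{n}|} = \frac{|A \cap \Phi_{n}|}{|\Phi_{n}|} + \frac{|A \cap (\Phi_{n}-g)|}{|\Phi_{n}|} \longrightarrow 2 d^{*}(A),\]
so $d^{*}(A \cup (A+g)) \geq 2 d^{*}(A)$. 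There is no serious obstacle; the only conceptual step is to select the F{\o}lner sequence so that it realizes $d^{*}(A)$ before invoking the F{\o}lner condition to absorb the translation by $g$.
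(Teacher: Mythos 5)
Your proof is correct, and your diagnosis of the typo (the conclusion should read $d^{*}(A\cup(A+g))=2d^{*}(A)$) is confirmed by how the lemma is invoked later, where $E:=A\cup(A+\mathbf{1})$ is formed from the disjoint pair $A$, $A+\mathbf{1}$. The paper omits the proof as "a straightforward application of the relevant definitions," and your argument -- subadditivity plus F{\o}lner-invariance under translation for the upper bound, and a F{\o}lner sequence realizing $d^{*}(A)$ for the lower bound -- is exactly that intended argument.
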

We omit the proof, which is a straightforward application of the relevant definitions.

\subsection{Difference sets and recurrence}
The study of measurable and topological recurrence is closely tied to the study of combinatorial structure in difference sets $A-A$.  After the following definitions, we state Question \ref{questionDifferenceSets} to rephrase Parts (i)-(iii) of Question \ref{questionMain} in terms of difference sets.

Fix a countably infinite abelian group $G$ for the remainder of this subsection.

\begin{definition}
     We say that $S \subseteq G$ is a
  \begin{enumerate}
    \item[$\bullet$] \emph{set of chromatic recurrence} if for all $k\in \mathbb N$ and every partition $G=\bigcup_{j=1}^{k} A_{j}$ of $G$, there is a $j\leq k$ such that $(A_{j}-A_{j}) \cap S \neq \varnothing$,
    \item[$\bullet$]  \emph{set of density recurrence} if for every $A\subseteq G$ having $d^{*}(A)>0$, $(A-A)\cap S\neq \varnothing$.
  \end{enumerate}
\end{definition}

The following proposition is a special case of Theorems 2.2 and 2.6 of \cite{BergelsonMcCutcheonRecurrence}.
\begin{proposition}\label{propositionCorrespondence} Let $S\subseteq G$.  \begin{enumerate}
    \item[(i)] $S$ is a set of measurable recurrence if and only if $S$ is a set of density recurrence.
    \item[(ii)] $S$ is a set of topological recurrence if and only if $S$ is a set of chromatic recurrence.
  \end{enumerate}
\end{proposition}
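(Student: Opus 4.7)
The plan is to establish each equivalence by transferring between combinatorial and dynamical data. For Part (ii), the implication $(\Rightarrow)$ uses the shift system. Given a partition $G=\bigsqcup_{j=1}^{k} A_{j}$, encode it as $\omega\in\{1,\dots,k\}^{G}$ with $\omega(h)=j$ when $h\in A_{j}$, and let $Z$ be any minimal subsystem of the orbit closure of $\omega$ under the shift action of $G$. The clopen cylinders $U_{j}:=\{\omega'\in Z:\omega'(0)=j\}$ partition $Z$, so some $U_{j}$ is nonempty. Topological recurrence applied to $U_{j}$ produces $g\in S$ and $\omega'\in U_{j}\cap T^{g}U_{j}$; approximating $\omega'$ on $\{0,g\}$ by a translate of $\omega$ yields $h\in G$ with $h, h+g\in A_{j}$, whence $g\in S\cap (A_{j}-A_{j})$. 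For the converse, given a minimal $(X,T)$ and a nonempty open $U\subseteq X$, minimality and compactness supply a finite $F\subseteq G$ with $X=\bigcup_{g\in F} T^{g}U$. Fix $x_{0}\in X$; the sets $A_{g}:=\{h\in G : T^{h}x_{0}\in T^{g}U\}$ cover $G$ and disjointify into a partition. Chromatic recurrence supplies $g\in F$ and $h_{1},h_{2}\in A_{g}$ with $h_{2}-h_{1}\in S$, and the points $T^{h_{1}-g}x_{0}, T^{h_{2}-g}x_{0}\in U$ witness $U\cap T^{h_{2}-h_{1}}U\ne\varnothing$.

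For Part (i), the direction $(\Rightarrow)$ is the Furstenberg correspondence. Given $A\subseteq G$ with $d^{*}(A)>0$, choose a F{\o}lner sequence $\Phi_{n}$ realizing this density, set $\omega=\mathbf{1}_{A}$, and take a weak-$*$ subsequential limit $\mu$ of $\frac{1}{|\Phi_{n}|}\sum_{g\in\Phi_{n}}\delta_{T^{g}\omega}$ on the shift system $X=\{0,1\}^{G}$. Then $\mu$ is $T$-invariant, $\mu(D)=d^{*}(A)$ for $D:=\{\omega':\omega'(0)=1\}$, and a direct calculation shows $\mu(D\cap T^{g}D)>0$ forces $g\in A-A$. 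Measurable recurrence of $S$ therefore yields $g\in S\cap (A-A)$.

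The direction $(\Leftarrow)$ of (i) is where I expect the main work. Let $(X,\mu,T)$ be a $G$-system with $\mu(D)>0$, and put $N:=\{h\in G : \mu(D\cap T^{h}D)=0\}$ and $R(x):=\{g\in G : T^{g}x\in D\}$. Fubini gives $\mu\{x : g, g+h\in R(x)\}=\mu(D\cap T^{h}D)$ for all $g,h\in G$, which is $0$ when $h\in N$; a countable union bound over $g\in G$ and $h\in N$ then shows $(R(x)-R(x))\cap N=\varnothing$ for $\mu$-almost every $x$. Fix any F{\o}lner sequence $\Phi$; reverse Fatou applied to the uniformly bounded averages $x\mapsto \frac{1}{|\Phi_{n}|}\sum_{g\in\Phi_{n}}\mathbf{1}_{D}(T^{g}x)$, whose $\mu$-integrals equal $\mu(D)$ by invariance, yields $\int \bar{d}_{\Phi}(R(x))\,d\mu \ge \mu(D)>0$. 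A positive-measure set of $x$ therefore satisfies $d^{*}(R(x))>0$; choosing $x$ in the intersection with the earlier full-measure set and setting $A:=R(x)$ gives $d^{*}(A)>0$ together with $(A-A)\cap N=\varnothing$. Density recurrence then delivers $g\in S\cap (A-A)\subseteq G\setminus N$ (noting $0\notin N$ since $\mu(D)>0$), so $\mu(D\cap T^{g}D)>0$.

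The main obstacle is this last step: arranging positive Banach density of $R(x)$ and avoidance of the null-difference set $N$ \emph{simultaneously} for one $x$. Countability of $G$ (and hence of $N$) makes the union bound for avoidance work, and reverse Fatou sidesteps any appeal to a pointwise ergodic theorem for countable abelian group actions; the remaining implications amount to direct encodings of combinatorial data as dynamical objects or vice versa.
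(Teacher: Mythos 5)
Your proof is correct. Note that the paper does not actually prove Proposition \ref{propositionCorrespondence}: it is quoted as a special case of Theorems 2.2 and 2.6 of the Bergelson--McCutcheon correspondence paper, so there is no in-text argument to compare against line by line. What you have written is essentially the standard correspondence-principle proof underlying those cited results: for (ii) you encode a partition as a point of $\{1,\dots,k\}^{G}$ and pass to a minimal subsystem of its orbit closure (one direction), and use the finite subcover $X=\bigcup_{g\in F}T^{g}U$ coming from minimality plus compactness to turn an open set into a finite colouring of $G$ (the other direction); for (i)$\Rightarrow$ you run the Furstenberg correspondence on the shift space, which is the same mechanism as the paper's Lemma \ref{lemmaMeasureCorrespondence}; and for the only genuinely delicate direction, density recurrence $\Rightarrow$ measurable recurrence, your device of the return-time set $R(x)=\{g:T^{g}x\in D\}$, the countable union bound over $G\times N$ giving $(R(x)-R(x))\cap N=\varnothing$ almost surely, and reverse Fatou applied to the bounded averages $\frac{1}{|\Phi_{n}|}\sum_{g\in\Phi_{n}}\mathbf 1_{D}(T^{g}x)$ (avoiding any pointwise ergodic theorem) is sound and complete, using only countability of $G$ and invariance of $\mu$. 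Two cosmetic remarks: the identity $\mu\{x:g,g+h\in R(x)\}=\mu\bigl(D\cap T^{-h}D\bigr)=\mu\bigl(D\cap T^{h}D\bigr)$ needs only measure invariance, not Fubini; and to get $\mu(D)=d^{*}(A)$ exactly you should pass to a subsequence of the chosen F{\o}lner sequence along which $\frac{|A\cap\Phi_{n}|}{|\Phi_{n}|}$ converges to $d^{*}(A)$ before taking the weak-$*$ limit (in any case $\mu(D)>0$ is all that is used).
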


We also need the following lemma, which follows immediately from our definition of ``piecewise syndetic" and the partition regularity of piecewise syndeticity (Lemma \ref{lemmaPWPartitionRegular}).

\begin{lemma}\label{lemmaPWrecurrence}
A set $S\subseteq G$ is a set of chromatic recurrence if and only if  $(A-A)\cap S \neq \varnothing$ whenever $A$ is piecewise syndetic.
\end{lemma}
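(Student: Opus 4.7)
The plan is to prove both directions of the equivalence, using partition regularity of piecewise syndeticity for the easy direction and the definition of piecewise syndetic given in the paper for the harder direction.

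For the direction ``$(\Leftarrow)$'', I would simply take a finite partition $G=\bigcup_{j=1}^{k} A_{j}$. Since $G$ is itself syndetic (and therefore piecewise syndetic), Lemma~\ref{lemmaPWPartitionRegular} (partition regularity of piecewise syndeticity) yields some $A_{j}$ that is piecewise syndetic. The hypothesis then gives $(A_{j}-A_{j})\cap S\neq \varnothing$, establishing chromatic recurrence.

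For the direction ``$(\Rightarrow)$'', I would unpack the paper's definition of piecewise syndetic. Let $A$ be piecewise syndetic, and let $T\subseteq G$ be a syndetic set witnessing this, so that every finite subset of $T$ translates into $A$. Choose a finite $F\subseteq G$ with $T+F=G$. From this cover I would build a partition $G=\bigsqcup_{f\in F} T_{f}$ by assigning each $g\in G$ to exactly one $f\in F$ with $g\in T+f$; thus $T_{f}-f\subseteq T$ for each $f$. Applying the chromatic recurrence hypothesis to this partition into $|F|$ cells, some index $f_{0}$ satisfies $(T_{f_{0}}-T_{f_{0}})\cap S\neq \varnothing$. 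Writing $U:=T_{f_{0}}-f_{0}\subseteq T$ and noting $U-U=T_{f_{0}}-T_{f_{0}}$, I obtain elements $u_{1},u_{2}\in T$ with $s:=u_{1}-u_{2}\in S$. Now invoking piecewise syndeticity with the finite set $\{u_{1},u_{2}\}\subseteq T$, there exists $g\in G$ such that $u_{1}+g,u_{2}+g\in A$, and consequently $s=(u_{1}+g)-(u_{2}+g)\in (A-A)\cap S$.

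The main conceptual step is the translation-averaging trick in the second direction: I extract a difference in $S$ from the syndetic ``template'' $T$ via chromatic recurrence on translates of $T$, then transport that difference into $A$ using piecewise syndeticity. I expect no real obstacle beyond being careful that the paper's nonstandard definition of piecewise syndetic (Definition~\ref{definitionSyndetic}) is exactly what makes this transport work cleanly — indeed, as noted in Remark~\ref{remarkPWSdefinition}, this is precisely why that definition was chosen.
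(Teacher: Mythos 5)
Your proposal is correct and is exactly the argument the paper intends: the paper gives no written proof beyond noting that the lemma "follows immediately" from its definition of piecewise syndeticity and from partition regularity (Lemma \ref{lemmaPWPartitionRegular}), and your two directions — partition regularity for one implication, and partitioning $G$ via a finite cover $T+F=G$ of the syndetic witness $T$ and then transporting a pair of $T$ into $A$ for the other — are precisely that argument written out in full.
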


The first three parts of Question \ref{questionDifferenceSets} rephrase Parts (i)-(iii) of Question \ref{questionMain} in terms of difference sets.  Part (iv) rephrases the question of whether $(R_{6})\implies (R_{5})$ in terms of difference sets.

\begin{question}\label{questionDifferenceSets}
  Let $G$ be a countable abelian group and $A \subseteq G$.

  \begin{enumerate}
    \item[(i)]  Does $A$ being piecewise syndetic imply that $A-A$ contains a Bohr neighborhood of some $g\in G$?  (Open for every $G$.)
    \item[(ii)] Does $d^{*}(A)>0$ imply that $A-A$ contains a Bohr neighborhood of some $g\in G$?  (Open for every $G$ except $G_{p}$ for prime $p$, the main result of \cite{GriesmerBohr} shows that the answer is ``no" for these $G_{p}$.)
    \item[(iii)]  Does $d^{*}(A)>0$ imply that $A-A$ contains a set of the form $B-B+g$, where $g\in G$ and $B\subseteq G$ is piecewise syndetic?  (Open for every $G$ except $G_{2}$, where Theorem \ref{theoremMainDensity} provides a negative answer.) 
    \item[(iv)]  Does $A$ being piecewise syndetic imply $A-A$ contains a Bohr neighborhood of $0\in G$?  (Open for every $G$.)\end{enumerate}
\end{question}

Interpreting the assertion $(R_{4}^{\bullet}) \cnot\implies (R_{3})$ in terms of difference sets requires more intricate statements than those in Question \ref{questionDifferenceSets}, see \cite{GriesmerRRPD} for details in the case $G=\mathbb Z$.

The next lemma proves that Parts (i)-(iii) of Question \ref{questionDifferenceSets} are really equivalent to the corresponding parts of Question \ref{questionMain}, and that Part (iv) of Question \ref{questionDifferenceSets} is equivalent to the question ``does $(R_{6})\implies (R_{5})$?".  Lemma \ref{lemmaRecurrenceAndRecurrence} provides a similar reformulation in terms of sets of return times.

\begin{lemma}\label{lemmaDifferenceAndRecurrence}  Let $G$ be a countable abelian group and $A\subseteq G$.
  \begin{enumerate}
    \item[(i)]  ($A$ is piecewise syndetic $\implies$ $A-A$ contains  Bohr neighborhood of some $g\in G$) if and only if $(R_{6}^{\bullet}) \implies (R_{5})$.

    \item[(ii)] ($d^{*}(A)>0$ $\implies$ $A-A$ contains a Bohr neighborhood of some $g\in G$) if and only if $(R_{6}^{\bullet}) \implies (R_{4})$.

        \item[(iii)] ($d^{*}(A)>0$ $\implies$ $A-A$ contains a set $g+B-B$, where $B\subseteq G$ is piecewise syndetic and $g\in G$) if and only if $(R_{5}^{\bullet}) \implies (R_{4})$.

        \item[(iv)]      ($A$ is piecewise syndetic $\implies $$A-A$ contains a Bohr neighborhood of $0\in G$) if and only if $(R_{6})\implies (R_{5})$.
\end{enumerate}
\end{lemma}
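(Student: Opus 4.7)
The plan is to combine the known correspondences between recurrence and difference sets with the characterizations of Bohr recurrence in terms of the Bohr topology. Proposition \ref{propositionCorrespondence}(i) gives that $S$ is a set of measurable recurrence if and only if $S\cap(A-A)\neq\varnothing$ for every $A$ with $d^{*}(A)>0$, and Proposition \ref{propositionCorrespondence}(ii) together with Lemma \ref{lemmaPWrecurrence} gives that $S$ is a set of topological recurrence if and only if $S\cap(A-A)\neq\varnothing$ for every piecewise syndetic $A$. Moreover, Lemma \ref{lemmaBohr} identifies $(R_{6})$ with ``$S$ meets every Bohr neighborhood of $0$,'' and Lemma \ref{lemmaExpanding} identifies $(R_{6}^{\bullet})$ with ``$S$ is dense in the Bohr topology of $G$,'' i.e., $S$ meets every Bohr neighborhood of every $g\in G$.

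For parts (i), (ii), and (iv) the proof then reduces to a contrapositive game. In each forward direction, assume the difference-set hypothesis and let $S$ satisfy the hypothesized recurrence property; given any $A$ from the target class (piecewise syndetic in (i) and (iv), positive density in (ii)), the hypothesis produces a Bohr neighborhood $U\subseteq A-A$, centered at $0$ in parts (i) and (iv) or at some $g\in G$ in part (ii). The appropriate Bohr characterization of $S$ then forces $S\cap U\neq\varnothing$, and hence $S\cap(A-A)\neq\varnothing$. In each reverse direction, suppose $A$ is a counterexample to the difference-set statement, so that $A-A$ contains no Bohr neighborhood of the required type. Then $S:=G\setminus(A-A)$ meets every Bohr neighborhood of the required type (each such neighborhood has at least one point outside $A-A$), so $S$ satisfies $(R_{6})$ or $(R_{6}^{\bullet})$ as appropriate; on the other hand $S\cap(A-A)=\varnothing$ witnesses the failure of $(R_{5})$ or $(R_{4})$.

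Part (iii) requires one small preliminary reformulation of $(R_{5}^{\bullet})$: $S$ satisfies $(R_{5}^{\bullet})$ if and only if $(S+h)\cap(B-B)\neq\varnothing$ for every $h\in G$ and every piecewise syndetic $B$, which by the substitution $g=-h$ rearranges to $S\cap(g+B-B)\neq\varnothing$ for every $g\in G$ and every piecewise syndetic $B\subseteq G$. With this in hand the argument parallels the others. Forward: if $A-A\supseteq g+B-B$ for some $g$ and piecewise syndetic $B$, then every $S$ satisfying $(R_{5}^{\bullet})$ intersects $A-A$, hence is a set of measurable recurrence. Reverse: if $A-A$ contains no translate of any $B-B$ with $B$ piecewise syndetic, then $S:=G\setminus(A-A)$ meets every such translate, so $S$ satisfies $(R_{5}^{\bullet})$, yet $S$ misses $A-A$ and hence fails $(R_{4})$.

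No step poses a genuine obstacle; the only care needed is to match the three ``target'' classes -- Bohr neighborhoods of $0$, Bohr neighborhoods of arbitrary $g\in G$, and translates $g+B-B$ of piecewise syndetic difference sets -- with the corresponding recurrence conditions $(R_{6})$, $(R_{6}^{\bullet})$, and $(R_{5}^{\bullet})$ supplied by the characterizations above. Once this dictionary is fixed, every equivalence in the lemma collapses to the tautology that $A-A$ contains a set of type $X$ if and only if its complement misses some set of type $X$.
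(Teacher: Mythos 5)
Your proposal is correct and takes essentially the same route as the paper: the paper proves part (iv) via the complement trick, combining Lemma \ref{lemmaBohr} (and Lemma \ref{lemmaExpanding} for the $\bullet$ versions), Proposition \ref{propositionCorrespondence}, and Lemma \ref{lemmaPWrecurrence}, and states that the other parts are proved similarly --- which is exactly what you spell out, including the translate reformulation of $(R_{5}^{\bullet})$ needed for part (iii). One indexing slip to fix: in part (i) the Bohr neighborhood is centered at some $g\in G$ (as in part (ii)), matching $(R_{6}^{\bullet})$; only part (iv) uses neighborhoods of $0$, matching $(R_{6})$ --- with that correction your dictionary is right and the argument goes through unchanged.
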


\begin{proof}
We prove only (iv).  The other equivalences are proved similarly.

  First suppose that if $A-A$ contains a Bohr neighborhood of $0\in G$ whenever $A$ is piecewise syndetic, and that $S\subseteq G$ satisfies $(R_{6})$.  Then $S\cap (A-A)\neq \varnothing$ whenever $A$ is piecewise syndetic, by  Lemma \ref{lemmaBohr}, and $S$ therefore satisfies $(R_{5})$, by Proposition \ref{propositionCorrespondence} and Lemma \ref{lemmaPWrecurrence}.

Now suppose $(R_{6})\implies (R_{5})$, and that $A\subseteq G$ is piecewise syndetic.  Assume, to get a contradiction, that $A-A$ does not contain a Bohr neighborhood of $0$.  Then $(A-A)^{c}:=G\setminus (A-A)$ has nonempty intersection with every Bohr neighborhood of $0$, so that $(A-A)^{c}$ satisfies $(R_{6})$, by Lemma \ref{lemmaBohr}.  Since we are assuming $(R_{6})\implies (R_{5})$, we conclude that $(A-A)^{c}$ satisfies $(R_{5})$, so that Proposition \ref{propositionCorrespondence} and Lemma \ref{lemmaPWrecurrence} imply $(A-A)^{c}\cap (A-A)\neq \varnothing$, a contradiction.
\end{proof}

Let $X$ be a set and $T$ an action of $G$ on $X$.  If $D\subseteq X$ let $\operatorname{Ret}_{T}(D):=\{g\in G: D\cap T^{g}D\neq \varnothing\}$.  The following lemma provides equivalent formulations of Parts (i)-(iii) of Question \ref{questionMain} in terms of the sets $\operatorname{Ret}_{T}(D)$.

\begin{lemma}\label{lemmaRecurrenceAndRecurrence}
Let $G$ be a countable abelian group.
  \begin{enumerate}

    \item[(i)]  (For every minimal $G$-system $(X,T)$ and every open $U\subseteq X$, $\operatorname{Ret}_{T}(U)$ contains a Bohr neighborhood of some $g\in G$) if and only if $(R_{6}^{\bullet}) \implies (R_{5})$

        \item[(ii)] (For every measure preserving $G$-system $(X,\mu,T)$ and  $D\subseteq X$ having $\mu(D)>0$, the set $\operatorname{Ret}_{T}(D)$ contains a Bohr neighborhood of some $g\in G$) if and only if $(R_{6}^{\bullet}) \implies (R_{4})$.

        \item[(iii)] (For every measure preserving $G$-system $(X,\mu,T)$ and $D\subseteq X$ having $\mu(D)>0$, the set $\operatorname{Ret}_{T}(D)$ contains a set of the form $B-B+g$, where $B$ is piecewise syndetic) if and only if $(R_{5}^{\bullet}) \implies (R_{4})$.

        \item[(iv)]  (For every minimal $G$-system $(X,T)$ and every open $U\subseteq X$, $\operatorname{Ret}_{T}(U)$ contains a Bohr neighborhood of $0$) if and only if $(R_{6})$ $\implies$ $(R_{5})$.

\end{enumerate}
\end{lemma}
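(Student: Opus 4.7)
The plan is to mirror the proof of Lemma \ref{lemmaDifferenceAndRecurrence}, with $\operatorname{Ret}_{T}(D)$ playing the role that the difference set $A-A$ plays there. The key repackaging of the definitions is the following: $S$ is a set of measurable recurrence exactly when $S \cap \operatorname{Ret}_{T}(D) \neq \varnothing$ for every measure preserving $G$-system $(X,\mu,T)$ and every $D$ with $\mu(D)>0$, and $S$ is a set of topological recurrence exactly when $S \cap \operatorname{Ret}_{T}(U) \neq \varnothing$ for every minimal topological $G$-system $(X,T)$ and every nonempty open $U$. Here we read $\operatorname{Ret}_{T}(D)$ as the measure-theoretic return set $\{g : \mu(D \cap T^{g} D) > 0\}$ in the measure preserving setting and as the set-theoretic return set in the topological setting, matching the conventions already fixed in the paper.

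I would prove all four parts with the same four-line contrapositive skeleton the author used for Lemma \ref{lemmaDifferenceAndRecurrence}(iv). Take (iv) of the present lemma as the model. For the forward direction, if every $\operatorname{Ret}_{T}(U)$ contains a Bohr neighborhood of $0$ and $S$ satisfies $(R_{6})$, then Lemma \ref{lemmaBohr} gives that $S$ meets every Bohr neighborhood of $0$, hence meets $\operatorname{Ret}_{T}(U)$, yielding $(R_{5})$. For the backward direction, if some $\operatorname{Ret}_{T}(U)$ fails to contain a Bohr neighborhood of $0$, then $\operatorname{Ret}_{T}(U)^{c}$ meets every Bohr neighborhood of $0$, so it satisfies $(R_{6})$ by Lemma \ref{lemmaBohr}; applying $(R_{6}) \implies (R_{5})$ to $\operatorname{Ret}_{T}(U)^{c}$ in the same system $(X,T)$ and the same $U$ produces an element of $\operatorname{Ret}_{T}(U) \cap \operatorname{Ret}_{T}(U)^{c}$, which is absurd.

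Parts (i), (ii), (iii) require only cosmetic changes to this template. For (i), replace ``contains a Bohr neighborhood of $0$'' by ``contains a Bohr neighborhood of some $g$'' throughout, and invoke Lemma \ref{lemmaExpanding} (which says $(R_{6}^{\bullet})$ is equivalent to Bohr density) in place of Lemma \ref{lemmaBohr}. For (ii), repeat (i) verbatim in the measure preserving setting, replacing $(R_{5})$ by $(R_{4})$ and $U \cap T^{g}U \neq \varnothing$ by $\mu(D \cap T^{g} D) > 0$. For (iii), replace ``contains a Bohr neighborhood of some $g$'' by ``contains a set of the form $B - B + g$ for some piecewise syndetic $B$''; the analogue of Lemma \ref{lemmaBohr} used in the template is now Proposition \ref{propositionCorrespondence}(ii) together with Lemma \ref{lemmaPWrecurrence}, which between them say that $S$ satisfies $(R_{5})$ (respectively $(R_{5}^{\bullet})$) iff $S$ (respectively every translate of $S$) meets $B - B$ for every piecewise syndetic $B$.

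The main obstacle is not mathematical but organizational: keeping the eight forward/backward arguments aligned, and ensuring the convention used for $\operatorname{Ret}_{T}(D)$ (set-theoretic in the topological parts, measure-theoretic in the measure preserving parts) is deployed consistently. No genuinely new ingredient beyond Lemmas \ref{lemmaBohr}, \ref{lemmaExpanding}, \ref{lemmaPWrecurrence}, and Proposition \ref{propositionCorrespondence} is required; in particular, Lemma \ref{lemmaMeasureCorrespondence} is not needed for this lemma, since the return-set reformulation of $(R_{4})$ and $(R_{5})$ already bakes in the correspondence implicitly.
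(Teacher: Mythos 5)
Your proposal is correct and is essentially the argument the paper has in mind: the paper omits the proof of this lemma, stating only that it follows the pattern of Lemma \ref{lemmaDifferenceAndRecurrence} via straightforward applications of Lemmas \ref{lemmaBohr} and \ref{lemmaExpanding}, which is exactly your contrapositive template with $\operatorname{Ret}_{T}(D)$ (read measure-theoretically in the measure preserving parts) in place of $A-A$, plus Proposition \ref{propositionCorrespondence} and Lemma \ref{lemmaPWrecurrence} for part (iii). Your observation that Lemma \ref{lemmaMeasureCorrespondence} is not needed here is also correct.
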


We omit the proof of Lemma \ref{lemmaRecurrenceAndRecurrence}; like Lemma \ref{lemmaDifferenceAndRecurrence} it may be proved with straightforward applications of Lemmas \ref{lemmaBohr} and \ref{lemmaExpanding}.

%
%
%

\subsection{Sumsets and measure expansion}

\begin{definition}
  Let $G$ be a countable abelian group and $S\subseteq G$.  We say that  $S$ is

  \begin{enumerate}
    \item[$\bullet$]  \emph{measure expanding}\footnote{Also called ``ergodic" in \cite{BFPlunnecke}.} if for every ergodic measure preserving $G$-system $(X,\mu,T)$ and every $D\subseteq X$ having $\mu(D)>0$, we have $\mu\bigl(\bigcup_{g\in S} T^{g}D\bigr)=1$.

    \item[$\bullet$] \emph{measure transitive} if for every (not necessarily ergodic) measure preserving $G$-system $(X,\mu,T)$ and every pair of sets $C, D \subseteq X$ such that $\mu(C\cap T^{g} D)>0$ for some $g\in G$, there exists $h\in S$ such that $\mu(C\cap T^{h}D)>0$.

    \item[$\bullet$]  \emph{density expanding} if for every set $A\subseteq G$ having $d^{*}(A)>0$, $d^{*}(S+A)=1$.

    \item[$\bullet$] \emph{expanding for minimal $G$-systems} if for every minimal topological $G$-system $(X,T)$ there is a dense $G_{\delta}$ set  $Y\subseteq X$, such that $\{T^{g}y: g\in S\}$ is dense in $X$ for all $y\in Y$.

    \item[$\bullet$] \emph{transitive for minimal $G$-systems} if for every minimal toplogical $G$-system and every pair of nonempty open sets $U, V\subseteq X$  there exists $h\in S$ such that $U\cap T^{h}V\neq \varnothing$.

    \item[$\bullet$] \emph{chromatically expanding} if for every piecewise syndetic set $A\subseteq G$, $d^{*}(S+A)=1$.
    \end{enumerate}

\end{definition}

We have the following equivalences and implications. The equivalences are proved in Lemma \ref{lemmaExpanding}.
\begin{align*}
  (R_{4}^{\bullet}) &\iff S \text{ is measure expanding }\\
  &\iff S \text{ is measure transitive}\\
   &\implies S \text{ is density expanding},\\[.08in]
  (R_{5}^{\bullet}) &\iff S \text{ is expanding for minimal $G$-systems }\\
  &\iff S \text{ is transitive for minimal $G$-systems}\\
   &\implies S \text{ is chromatically expanding},
\end{align*}
We now discuss the implications ``$(R_{4}^{\bullet}) \implies$ $S$ is density expanding" and ``$(R_{5}^{\bullet}) \implies$ $S$ is chromatically expanding."

It is well known that if $S$ is measure expanding, then $S$ is density expanding -- see Correspondence Principle II in \cite{BFPlunnecke} or  Proposition 2.3 of \cite{BBF}.  A counterexample to implication (3) of Question \ref{questionIsr} for a given $G$ would therefore show that $(R_{6}^{\bullet}) \cnot\implies (R_{4})$.  The implication ``$(R_{5}^{\bullet})$ $\implies$ $S$ is chromatically expanding" has a proof analogous to the proof of ``$(R_{4}^{\bullet}) \implies$ $S$ is density expanding."  Whether density expanding implies measure expanding for $G=\mathbb Z$ is asked in \cite{GriesmerIsr} and remains open.  In general we have the following question which is open for every countable abelian group $G$.

\begin{question}\label{questionExpanding}
Let $S$ be a subset of a countable abelian group $G$.
\begin{enumerate}
    \item[(i)] Does $S$ being density expanding imply $(R_{4}^{\bullet})$?

\item[(ii)]  Does $S$ being chromatically expanding imply $(R_{5}^{\bullet})$?
\end{enumerate}
\end{question}

If every density expanding $S$ satisfies $(R_{4}^{\bullet})$, then Part (1) of Question \ref{questionIsr} is equivalent to Part (ii) of Question \ref{questionMain}.  We also have the following variant of Question \ref{questionIsr}, both parts of which are open for every $G$ besides $G_{2}$.

\begin{question}\label{questionSumsets}  Let $S\subseteq G$.
\begin{enumerate}

\item[(i)]  Does $(R_{5}^{\bullet})$ imply that $S$ is density expanding?


\item[(ii)]  Does $(R_{5}^{\bullet})$ imply that $S+A$ is piecewise syndetic whenever $d^{*}(A)>0$?
\end{enumerate}
\end{question}
For $G=G_{2}$, Theorem \ref{theoremPWsynd} answers both parts of Question \ref{questionSumsets} in the negative.  A positive solution to Problem \ref{problemHamming} together with the results of \cite{GriesmerBohr} would answer both parts of Question \ref{questionSumsets} in the negative for $G=G_{p}$, where $p$ is any prime.

\section{Vector spaces over \texorpdfstring{$\mathbb F_{p}$}{Fp}}\label{sectionGroups}
In this section we state some definitions and conventions needed for the proof of Theorems \ref{theoremMainDynamic}, \ref{theoremMainDensity}, \ref{theoremPreciseDynamic}, and \ref{theoremPWsynd}.  We identify a useful presentation of $G_{p}:=\bigoplus_{n=1}^{\infty} \mathbb Z/p\mathbb Z$ and some associated subgroups.  We use an arbitrary prime $p$ so that we can formulate Problem \ref{problemHamming}, but we specialize to the case $p=2$ in our proofs. This material is also presented in Section 2 of \cite{GriesmerBohr}.

 If $p$ is a prime number, let $\mathbb F_{p}$ $(= \mathbb Z/p\mathbb Z)$ denote the finite field with $p$ elements.  We write the elements of $\mathbb F_{p}$ as $0,1,\dots, p-1$.  Consider the countable direct sum $G_{p} := \bigoplus_{n=1}^{\infty} \mathbb F_{p}$.
\subsection{Presentation of \texorpdfstring{$G_{p}$}{G}.}\label{secPresentation}  Let $\Omega := \{0,1\}^{\mathbb N}$, and write elements of $\Omega$ as $\omega = \omega_{1}\omega_{2}\omega_{3}\dots$.  For each $n\in \mathbb N$ let $\Omega_{n}=\{0,1\}^{\{1,\dots,n\}}$ and $\pi_{n}:\Omega\to \Omega_{n}$ be the projection map given by $\pi_{n}(\omega) = \omega_{1}\dots\omega_{n}$.  Let $\Gamma_{p}$ be the group of functions $g:\Omega\to \mathbb F_{p}$ with the group operation of pointwise addition.

For each $n\in \mathbb N$, let $G_{p}^{(n)}$ be the subgroup of $\Gamma_{p}$ consisting of functions of the form $f\circ \pi_{n}$, where $f:\Omega_{n}\to \mathbb F_{p}$. Observing that $G_{p}^{(n)}\subseteq G_{p}^{(n+1)}$ for each $n$, we let $\tilde{G}_{p}:=\bigcup_{n\in \mathbb N} G_{p}^{(n)}$.  Then $\tilde{G}_{p}$ is a countable abelian group isomorphic\footnote{One can construct the isomorphism by hand, but when $p$ is prime it suffices to observe that both $G_{p}$ and $\tilde{G}_{p}$ are countably infinite vector spaces over the finite field $\mathbb F_p$.} to $G_{p}$.  Our constructions are easier to work with from the perspective of $\tilde{G}_{p}$ rather than the standard presentation of a countable direct sum, so from now on we let $G_{p}$ denote the group $\tilde{G}_{p}$.



We observe that $G_{p}^{(n)}$ is isomorphic to $(\mathbb F_{p})^{\Omega_{n}}$, and we will identify elements of $G_{p}^{(n)}$ with elements of $(\mathbb F_{p})^{\Omega_{n}}$.  The identification is given by  $g \leftrightarrow \tilde{g}$ if and only if $g=\tilde{g}\circ \pi_{n}$ for $\tilde{g}\in (\mathbb F_{p})^{\Omega_{n}}$.


Let $G_{p}^{(0)}$ denote the group of constant functions $f:\Omega\to \mathbb F_{p}$, so that $G_{p}^{(0)}\subseteq G_{p}^{(1)}$. Let $\mathbf 1 \in G_{p}$ denote the constant function where $\mathbf 1(\omega) = 1 \in \mathbb F_{p}$ for every $\omega \in \Omega$. For $x\in \mathbb F_{p}$, define $x\mathbf 1$ to be the constant function having $x\mathbf 1(\omega)= x$ for all $\omega \in \Omega$, and let $\mathbf{0}$ denote $0\mathbf{1}$, the identity element of $G_{p}$.

\begin{remark}
 $G_{p}$ is the group of continuous functions $g:\Omega\to\mathbb F_{p}$, where $\Omega$ has the product topology and $\mathbb F_{p}$ has the discrete topology.  Alternatively, $G_{p}$ is the group of functions $g:\Omega\to \mathbb F_{p}$ where $g(\omega)$ depends on only finitely many coordinates of $\omega$.
\end{remark}

\subsection{Cylinder sets}\label{secConvention} If $\tau\in \Omega_{n}$, let $[\tau]\subseteq \Omega$ be $\pi_{n}^{-1}(\tau)$, so that $[\tau]:=\{\omega\in \Omega: \omega_{i}=\tau_{i} \text{ for } 1\leq i \leq n\}$.  We call $[\tau]$ a \emph{cylinder set}.  Each cylinder set $[\tau]$ is homeomorphic to $\Omega$ by the map $\theta:[\tau]\to \Omega$, $\theta(\omega):=\omega_{n+1}\omega_{n+2}\dots$.

Observe that $G_{p}^{(n)}$ is the group of functions $g:\Omega\to \mathbb F_{p}$ which are constant on the cylinder sets $[\tau]$, where $\tau\in \Omega_{n}$.

\begin{definition}\label{defContent}
  If $E\subseteq \Omega$, let $|E|_{n}:=|\{\tau \in \Omega_{n}: [\tau]\subseteq E\}|$.
\end{definition}
The above definition will usually be applied to sets of the form $g^{-1}(S)$ where $g\in G_{p}^{(n)}$ and $S\subseteq\mathbb F_{p}$.  We list some relevant properties.

\begin{observation}\label{obsContent}

\begin{enumerate}
\item[(C1)]  $|E|_{n}\leq 2^{n}$ for all $E\subseteq\Omega$.

\item[(C2)]  For an element $g\in G_{p}^{(n)}$, if $g=\tilde{g}\circ \pi_{n}$, then $|g^{-1}(1)|_{n} = |\tilde{g}^{-1}(1)|$.

\item[(C3)] If $g\in G_{p}^{(n)}$, $A, B\subseteq \mathbb F_{p}$, and $A\cap B=\varnothing$, then
\[|g^{-1}(A)\cup g^{-1}(B)|_{n}= |g^{-1}(A)|_{n}+|g^{-1}(B)|_{n}.\]
\end{enumerate}

\end{observation}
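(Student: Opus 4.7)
The three properties are essentially bookkeeping; I expect the proof to be a short unpacking of definitions, with no real obstacle.

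For (C1), I would simply note that $\{\tau \in \Omega_n : [\tau]\subseteq E\}$ is a subset of $\Omega_n = \{0,1\}^{\{1,\dots,n\}}$, which has exactly $2^n$ elements.

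For (C2), the key observation is that for each $\tau\in\Omega_n$ and each $\omega \in [\tau]$ we have $\pi_n(\omega)=\tau$ by definition of $[\tau]$. Therefore $g(\omega) = \tilde{g}(\pi_n(\omega)) = \tilde{g}(\tau)$ for every $\omega\in[\tau]$, so $g$ is constant on $[\tau]$ with value $\tilde{g}(\tau)$. It follows that $[\tau]\subseteq g^{-1}(1)$ if and only if $\tilde{g}(\tau)=1$, and the identity map $\tau\mapsto\tau$ gives a bijection between $\{\tau\in\Omega_n : [\tau]\subseteq g^{-1}(1)\}$ and $\tilde{g}^{-1}(1)$, yielding equal cardinalities.

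For (C3), I would again use that an element $g\in G_p^{(n)}$ is constant on each cylinder $[\tau]$ with $\tau\in\Omega_n$, as noted in the paragraph just before the definition of $|E|_n$. Consequently, for any $S\subseteq\mathbb{F}_p$ and $\tau\in\Omega_n$, the conditions ``$[\tau]\subseteq g^{-1}(S)$'' and ``$g(\omega)\in S$ for some $\omega\in[\tau]$'' coincide. Applied to $S=A$, $S=B$, and $S=A\cup B$, this shows that $[\tau]\subseteq g^{-1}(A)\cup g^{-1}(B)$ if and only if $[\tau]\subseteq g^{-1}(A)$ or $[\tau]\subseteq g^{-1}(B)$; and since $A\cap B=\varnothing$, these two alternatives are mutually exclusive. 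Thus the set $\{\tau\in\Omega_n : [\tau]\subseteq g^{-1}(A)\cup g^{-1}(B)\}$ decomposes as a disjoint union of $\{\tau : [\tau]\subseteq g^{-1}(A)\}$ and $\{\tau : [\tau]\subseteq g^{-1}(B)\}$, giving the claimed additivity.

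There is no genuine obstacle here; the content of the observation is that $|\cdot|_n$ behaves like a counting measure on the finite ``alphabet'' $\Omega_n$ once we restrict attention to $G_p^{(n)}$-measurable events, which is exactly what the presentation in Section \ref{secPresentation} was set up to make transparent.
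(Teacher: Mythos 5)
Your proof is correct, and since the paper states Observation \ref{obsContent} without proof (treating it as immediate from the definitions in Sections \ref{secPresentation} and \ref{secConvention}), your unpacking — constancy of $g\in G_{p}^{(n)}$ on each cylinder $[\tau]$, the resulting bijection $\tau\leftrightarrow\tilde{g}(\tau)$ for (C2), and disjointness of the two families of cylinders for (C3) — is exactly the intended argument. Nothing further is needed.
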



\subsection{Restrictions to cylinders}\label{secCylinders} Given $m, n\in \mathbb N$ with $m<n$, a string $\tau\in \Omega_{m}$, and an element $g\in G_{p}^{(n)}$, we define $g|_{\tau}$ to be the element of $G_{p}^{(n-m)}$ satisfying $g|_{\tau}(\omega) = g(\tau\omega)$ for all $\omega\in \{0,1\}^{(n-m)}$, where $\tau\omega\in \Omega$ is the concatenation of $\tau$ and $\omega$.  To give an explicit example: for $n=5$, $m=2$, and $g:\Omega\to \mathbb F_{7}$, if $\tau = 01$ and $\omega\in \Omega$, then $g|_{\tau}(\omega) = g(01\omega_{1}\omega_{2}\omega_{3}\dots)$.  With this notation we can identify $g\in G_{p}^{(n)}$ with the function $f:\Omega_{m}\to G_{p}^{(n-m)}$, where $f(\tau):=g|_{\tau}$ for each $\tau \in \Omega_{m}$.  This identification is used in Definition \ref{definitionStep}, where the sets $A$ of Theorems \ref{theoremMainDensity} and \ref{theoremPWsynd} are defined.


\subsection{Upper Banach density in \texorpdfstring{$G_{p}$}{Gp}}  Since $G_{p}=\bigcup_{n=1}^{\infty} G_{p}^{(n)}$, the sequence $(G_{p}^{(n)})_{n\in \mathbb N}$ is a F{\o}lner sequence for $G_{p}$, as mentioned in Section \ref{sectionUBD}.  Consequently,  we have $d^{*}(A) \geq \limsup_{n\to \infty} \frac{|A\cap G_{p}^{(n)}|}{|G_{p}^{(n)}|}$ for every $A\subseteq G_{p}$.

\subsection{Hamming Balls}\label{sectionHammingBalls} For $n, k \in \mathbb N\cup\{0\}$, let $U(n,k)$ be the set of $g\in G_{p}^{(n)}$ satisfying $|\{\omega \in \Omega: g(\omega) \neq 0\}|_{n}\leq k$ (cf. Definition \ref{defContent}). This is  the \emph{Hamming ball of scale $n$ and radius $k$ around} $\mathbf 0\in G_{p}^{(n)}$.  In other words, $U(n,k)$ is the set of $g\in G_{p}$ which are constant on the cylinder sets $[\tau]$ for $\tau\in \Omega_{n}$ and $g|_{\tau}=\mathbf 0$ for at least $|\Omega_{n}|-k$ such $\tau$.

For $g \in G_{p}$, let $V(n,k) := U(n,k) + \mathbf{1}$, so that
\[V(n,k) = \{g\in G_{p}^{(n)}: |\{\omega \in \Omega: g(\omega) \neq 1\}|_{n} \leq k\}.\]
We call $V(n,k)$ the \emph{Hamming ball of scale $n$ and radius $k$ around} $\mathbf 1$.

\begin{remark}
 We call the sets $U(n,k)$ and $V(n,k)$ ``Hamming balls" as we may identify elements of $G_{p}^{(n)}$ with strings of length $2^{n}$ from the alphabet $\mathbb F_{p}$.  With this identification $U(n,k)$ is the set of strings differing from the constant $0$ string in at most $k$ coordinates.
\end{remark}

\begin{definition}
  Let $k, n\in \mathbb N$, $\delta>0$.  A set $S\subseteq G_{p}^{(n)}$ is a
  \begin{enumerate}
    \item[$\bullet$] \emph{set of $\delta$-density recurrence} if for every $A\subseteq G_{p}^{(n)}$ having $|A|\geq \delta|G_{p}^{(n)}|$, $(A-A)\cap S\neq \varnothing$.

    \item[$\bullet$]  \emph{set of $k$-chromatic recurrence} if for all partitions $G_{p}^{(n)}=\bigcup_{j=1}^{k} A_{j}$, there is a $j$ such that $(A_{j}-A_{j})\cap S\neq \varnothing$. \end{enumerate}
\end{definition}

In $G_{2}^{(n)}$, the sets $V(n,k)$ have the following important properties.

\begin{enumerate}

\item[(V1)] For $\delta<1/2$ and $n$ much larger than $k$, $V(n,k)$ is not a set of $\delta$-density recurrence: when $n$ is very large compared to $k$, there are sets $A\subseteq G_{2}^{(n)}$ having $|A| > |G_{2}^{(n)}|   \bigl(\frac{1}{2}-\varepsilon\bigr)$, such that $(A-A)\cap V(n,k) = \varnothing$.  In fact
    \[A:=\bigl\{g\in G_{2}^{(n)}: |g^{-1}(1)|_{n} > \tfrac{1}{2}|\Omega_{n}|+k\bigr\}\]
     is such a set.

\item[(V2)] $V(n,k)$ is a set of $k$-chromatic recurrence: if $G_{2}^{(n)} = \bigcup_{j=1}^{k} A_{j}$, there is a $j$ such that $(A_{j}-A_{j})\cap V(n,k)\neq \varnothing$.

\item[(V3)] $V(n,k)$ is a set of density recurrence when $k$ is comparable to a fixed multiple of $n$: for fixed $c, \delta>0$, there exists $N$ such that $(A-A) \cap V(n,\lfloor cn \rfloor)\neq \varnothing$ whenever $n\geq N$ and $A\subseteq G_{2}^{(n)}$ has $|A|\geq \delta|G_{2}^{(n)}|$.

\end{enumerate}

Property (V2) is established in the proof of Lemma \ref{lemmaChromatic}. Property (V1) is proved in Lemmas \ref{lemmaBase} and \ref{lemmaBaseEstimate}. Property (V3), which we do not use in this paper, is a corollary of a theorem of Kleitman \cite{Kleitman}, as shown in \cite{ForrestPaper}.

Each of the constructions in \cite{Kriz}, \cite[Theorem 1.2]{McCutcheonAlexandria}, and \cite[Theorem 3.35]{McCutcheonBook}  prove that $(R_{5})\cnot\implies (R_{4})$  by finding sets $\tilde{V}(n,k) \subseteq \mathbb Z$ imitating $V(n,k)$ and exploiting Properties (V1) and (V2), taking a union $\bigcup_{i=1}^{\infty}\tilde{V}(n_{i},k_{i})$ to construct the desired example.   Similarly, the constructions of  \cite{ForrestPaper}, \cite{ForrestThesis}, (proving $(R_{4})\cnot\implies (R_{3})$), and \cite{GriesmerRRPD} (proving $(R_{4}^{\bullet}) \cnot\implies (R_{3})$) find sets $\tilde{V}(n,k)\subseteq \mathbb Z$ imitating $V(n,k)$ and exploiting Property (V3), taking a union of these to get the desired example.  Every known example distinguishing some pair of the properties $(R_{3}), (R_{4})$, $(R_{5})$, $(R_{6})$, or $(R_{3}^{\bullet}), (R_{4}^{\bullet})$, $(R_{5}^{\bullet})$, $(R_{6}^{\bullet})$ follows this rough outline.  It would be interesting to find, for example, a set $S\subseteq \mathbb Z$ satisfying $(R_{5})$ but not $(R_{4})$, which is not constructed in this way.  To be more specific, we pose the following question.

\begin{question}\label{questionHereditary}
  \begin{enumerate}
    \item[(i)] Is there a set $S\subseteq S_{3}:=\{n^{3} : n\in \mathbb N\}$ satisfying $(R_{5})$ but not $(R_{4})$?

    \item[(ii)]  Is there a set $S\subseteq S_{5/2}:=\{\lfloor n^{5/2} \rfloor: n\in \mathbb N\}$ satisfying $(R_{5}^{\bullet})$ but not $(R_{4})$?
 \end{enumerate}
\end{question}
We use $S_{3}$ and $S_{5/2}$ because it appears that the constructions of \cite{Kriz} and \cite{McCutcheonAlexandria,McCutcheonBook} cannot produce subsets of $S_{3}$ and $S_{5/2}$ which are sets of topological recurrence, so a significant modification of the technique seems to be necessary to provide an affirmative answer to either part of Question \ref{questionHereditary}.  On the other hand, it would be interesting to find a set of measurable recurrence with the property that every subset thereof which is a set of topological recurrence is also a set of measurable recurrence, so we formulate the following question, which is open for every group $G$.

\begin{question}\label{questionHereditary2}
  Fix a countable abelian group $G$.  Is there a set of measurable recurrence $S\subseteq G$ such that every subset of $S$ which is a set of topological recurrence is also a set of measurable recurrence?
\end{question}

A negative answer to either part of Question \ref{questionHereditary} would provide a positive answer to Question \ref{questionHereditary2}, as it is well known that $S_{3}$ is a set of measurable recurrence (by a theorem due to Furstenberg \cite{Furstenberg} and S\'{a}rk\H{o}zy \cite{Sarkozy}, independently) and $S_{5/2}$ satisfies $(R_{1})$ (see \cite{BKQW}, for example).

\section{Proof of Theorem \ref{theoremMainDensity}}\label{sectionProof}

In this section we construct the sets $S, A \subseteq G_{2}$ described in Theorem \ref{theoremMainDensity}.  We maintain the notation and conventions of Section \ref{sectionGroups}. The set $S$ will be a union of some of the $V(n,k)$ (defined in Section \ref{sectionHammingBalls}), so we begin by showing that the $V(n,k)$ satisfy a quantitative version of chromatic recurrence.

\subsection{Chromatic recurrence properties of the \texorpdfstring{$V(n,k)$}{Vnk}}

\begin{lemma}\label{lemmaChromatic}
  Let $(n_{i})_{i\in \mathbb N}, (k_{i})_{i\in \mathbb N}$ be increasing sequences of natural numbers and let $(g_{i})_{i\in \mathbb N}$ be a sequence of elements of $G_{2}$ with $g_{i}\in G_{2}^{(n_{i})}$ for each $i$. Then $\bigcup_{i\in \mathbb N} (g_{i}+U(n_{i},k_{i})
   )\setminus \{0\}$ is a set of chromatic recurrence, and therefore a set of topological recurrence.

Consequently, every translate of $S:=\bigcup_{i=1}^{\infty} V(n_{i},k_{i})$ is a set of topological recurrence in $G_{2}$, and in fact, for every $g\in G_{2}$, $(g+S)\setminus \{0\}$ is a set of topological recurrence.
\end{lemma}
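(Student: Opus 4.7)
The lemma has two assertions; the second follows from the first by Proposition~\ref{propositionCorrespondence}.  Given $g\in G_{2}$, choose $i_{0}$ so that $g+\mathbf{1}\in G_{2}^{(n_{i_{0}})}$, and apply the first assertion to the sequences $(n_{i})_{i\geq i_{0}}$, $(k_{i})_{i\geq i_{0}}$, and $(g+\mathbf{1})_{i\geq i_{0}}$.  Using $(g+\mathbf{1})+U(n_{i},k_{i})=g+V(n_{i},k_{i})$, the resulting set $\bigcup_{i\geq i_{0}}(g+V(n_{i},k_{i}))\setminus\{\mathbf{0}\}$, contained in $(g+S)\setminus\{\mathbf{0}\}$, is a set of chromatic recurrence and hence of topological recurrence.

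For the first assertion, set $\mathcal{S}:=\bigcup_{i}(g_{i}+U(n_{i},k_{i}))\setminus\{\mathbf{0}\}$ and fix a partition $G_{2}=A_{1}\sqcup\cdots\sqcup A_{k}$; by pigeonhole some $A_{j}$ has $d^{*}(A_{j})\geq 1/k$.  Suppose for contradiction that $(A_{j}-A_{j})\cap\mathcal{S}=\varnothing$.

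The plan is to produce $k+1$ elements $c_{0}=\mathbf{0},c_{1},\dots,c_{k}\in G_{2}$ whose pairwise differences $c_{\ell}-c_{\ell'}$ (for $\ell\neq\ell'$) all lie in $\mathcal{S}$.  Under the contradiction hypothesis the translates $A_{j}+c_{0},\dots,A_{j}+c_{k}$ are then pairwise disjoint (since each $c_{\ell}-c_{\ell'}\in\mathcal{S}$ avoids $A_{j}-A_{j}$), so iterating Lemma~\ref{lemAdditivity} yields
\[
d^{*}\bigl(\textstyle\bigcup_{\ell=0}^{k}(A_{j}+c_{\ell})\bigr) = (k+1)\,d^{*}(A_{j})\geq\tfrac{k+1}{k}>1,
\]
contradicting $d^{*}\leq 1$.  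The task therefore reduces to exhibiting such a $(k+1)$-element ``clique'' in the Cayley graph on $G_{2}$ with generating set $\mathcal{S}$.

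A natural starting point is to pick a large index $i$ with $k_{i}\geq k$ and $2^{n_{i}}>k$, and take $c_{\ell}:=g_{i}+h_{\ell}$ for $\ell=1,\dots,k$, where $h_{1},\dots,h_{k}\in U(n_{i},1)\setminus\{\mathbf{0}\}$ are distinct and chosen so that every $c_{\ell}$ is nonzero.  Each ``primary'' difference $c_{\ell}-c_{0}=g_{i}+h_{\ell}$ then lies in $g_{i}+U(n_{i},k_{i})\subseteq\mathcal{S}$.  The main obstacle I anticipate is handling the ``secondary'' differences $c_{\ell}-c_{\ell'}=h_{\ell}+h_{\ell'}\in U(n_{i},2)$, which need not lie in $g_{i}+U(n_{i},k_{i})$ for arbitrary $g_{i}$.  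Placing these secondary differences in $\mathcal{S}$ forces the argument to use the full union across multiple scales, choosing alternative indices $i'$ with $h_{\ell}+h_{\ell'}\in g_{i'}+U(n_{i'},k_{i'})$; the monotonic growth of $(n_{i})$ and $(k_{i})$ supplies flexibility, but making this work uniformly for arbitrary sequences $(g_{i})$ is the delicate combinatorial heart of the proof.
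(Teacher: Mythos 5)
Your reduction of the second assertion to the first (taking the constant sequence $g_{i}=g+\mathbf{1}$ and using $(g+\mathbf{1})+U(n_{i},k_{i})=g+V(n_{i},k_{i})$, then invoking Proposition \ref{propositionCorrespondence}) is correct and is exactly what the paper does. The gap is in the first assertion, and it is fatal to the strategy rather than merely delicate: by replacing the partition with a single cell satisfying $d^{*}(A_{j})\geq 1/k$, you have converted the chromatic statement into a density statement, and that density statement is false. The whole point of the paper is that such unions need not be sets of density recurrence: by Theorem \ref{theoremMainDensity} (see Equation (\ref{equationDiff1})), for $g_{i}=\mathbf{1}$ there is a set $A$ with $d^{*}(A)>\tfrac12-\varepsilon$ and $(A-A)\cap S=\varnothing$, so no argument that only uses ``some cell has density $\geq 1/k$'' can succeed.

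Concretely, the $(k+1)$-clique you need does not exist in general. Since $c_{0}=\mathbf{0}$ is in your clique, already for $k=2$ you would need $c_{1},c_{2},c_{1}+c_{2}\in\mathcal{S}$, i.e.\ a triangle through $\mathbf{0}$. Take $g_{i}=\mathbf{1}$ and any admissible sequences with $k_{i}<2^{n_{i}}/4$: every element of $\mathcal{S}$ then has at least a $3/4$ fraction of the cylinders of its scale mapped to $1$, so the sum of any two elements of $\mathcal{S}$ has at most a $1/2$ fraction mapped to $1$ and therefore lies in no $V(n_{m},k_{m})$; the Cayley graph with generating set $\mathcal{S}$ is triangle-free, and switching to ``alternative indices $i'$'' cannot help. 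A correct proof must exploit the full partition, which is what the paper does: in the nondegenerate case it colors the $r$-element subsets of the cylinders in $g^{-1}(1)$ by the cells $A_{j}$ and applies Lov\'asz's theorem (Theorem \ref{thmLovasz}) to find two disjoint $r$-sets in one cell, whose difference is a nonzero element of $A_{j}-A_{j}$ lying in $g+U(n,k+1)$; when $g^{-1}(1)$ occupies fewer than $k+2$ cylinders, a Poincar\'e-type pigeonhole (Lemma \ref{lemmaPoincare}) inside $U(n,2k+2)\subseteq g+U(n,3k+3)$ finishes the argument.
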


As in \cite{ForrestThesis, Kriz, McCutcheonAlexandria, McCutcheonBook}, we prove Lemma \ref{lemmaChromatic} as a consequence of the following theorem of Lov\'asz \cite{Lovasz}.

\begin{theorem}\label{thmLovasz}
  Let $k, r\in \mathbb N$, and let $E$ be the set of $r$-element subsets of $\{1,\dots, 2r+k\}$.  If $E=\bigcup_{j=1}^{k}E_{j}$, there is a $j\leq k$ and a disjoint pair of elements $e_{1}, e_{2}\in E$ such that $e_{1}, e_{2}\in E_{j}$.
\end{theorem}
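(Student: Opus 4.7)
The plan is to recognize the statement as the content of Lov\'asz's proof of the Kneser conjecture for the parameters $n := 2r+k$, and to carry out his original topological argument via the Borsuk--Ulam theorem. Set $d := k+1$. Choose $n = 2r+k$ points $v_{1},\dots,v_{n}$ on the sphere $S^{d} \subseteq \mathbb{R}^{d+1}$ in \emph{general position}, meaning that any $d+1$ of them are linearly independent as vectors in $\mathbb{R}^{d+1}$; a generic choice works. Throughout, identify $r$-subsets of $\{1,\dots,n\}$ with $r$-subsets of $\{v_{1},\dots,v_{n}\}$ in the obvious way.

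For each $x \in S^{d}$, let $H(x) := \{y \in S^{d} : \langle x,y\rangle > 0\}$ denote the open hemisphere centered at $x$. Given the coloring $E = \bigcup_{j=1}^{k} E_{j}$, define, for each $j \leq k$,
\[
A_{j} := \bigl\{x \in S^{d} : \text{some } r\text{-subset of } \{i : v_{i} \in H(x)\} \text{ lies in } E_{j}\bigr\},
\]
and let $B := S^{d} \setminus (A_{1} \cup \dots \cup A_{k})$. Each $A_{j}$ is open because ``$v_{i} \in H(x)$'' is an open condition on $x$, while $B$ is closed, and together these $k+1$ sets cover $S^{d}$.

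By the Lyusternik--Schnirelmann theorem (an immediate corollary of Borsuk--Ulam), any cover of $S^{d}$ by $d+1 = k+2$ sets that are each either open or closed must contain an antipodal pair in a single set; a fortiori the same holds for our cover by only $k+1$ sets. So some $x \in S^{d}$ satisfies $\{x,-x\} \subseteq A_{j}$ for some $j \leq k$, or else $\{x,-x\} \subseteq B$. In the first case, $H(x)$ contains the support of some $e_{1} \in E_{j}$ and $H(-x)$ contains the support of some $e_{2} \in E_{j}$; since $H(x) \cap H(-x) = \varnothing$, we conclude $e_{1} \cap e_{2} = \varnothing$, which is exactly the desired monochromatic disjoint pair. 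The step I expect to be the real obstacle is ruling out the alternative $\{x,-x\} \subseteq B$, and this is where general position is used: the equator $\{y : \langle x,y\rangle = 0\}$ is a $d$-dimensional linear subspace of $\mathbb{R}^{d+1}$, so by general position at most $d = k+1$ of the $v_{i}$ lie on it; hence at least $n - d = 2r-1$ of the $v_{i}$ are split between $H(x)$ and $H(-x)$, and by pigeonhole at least one of these hemispheres contains $r$ of the $v_{i}$, contradicting the defining property of $B$. This contradiction forces the first case and completes the proof.
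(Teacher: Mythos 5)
The paper offers no proof of Theorem \ref{thmLovasz}: it is quoted as a known theorem of Lov\'asz (the Kneser conjecture, for $r$-subsets of a $(2r+k)$-set) with a citation to \cite{Lovasz}, and is used as a black box in the proof of Lemma \ref{lemmaChromatic}. Your argument is a correct, self-contained proof of the quoted statement; it is essentially Greene's Borsuk--Ulam proof of the Kneser conjecture (a streamlining of B\'ar\'any's simplification of Lov\'asz's original neighborhood-complex argument). The details check out: the $A_{j}$ are open, $B$ is closed, they cover $S^{k+1}$, and in the case $\{x,-x\}\subseteq B$ your count is right --- general position puts at most $d=k+1$ of the $v_{i}$ on the equator, so at least $2r-1$ of them are split between the two open hemispheres, one hemisphere gets at least $r$ of them, and hence $x$ or $-x$ lies in some $A_{j}$, a contradiction. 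The one place you are slightly glib is the ``immediate corollary'' claim: the version of Lyusternik--Schnirelmann allowing each covering set to be \emph{either} open \emph{or} closed is a genuine (if short) extension of the classical statement --- one replaces each closed set containing no antipodal pair by a small open neighborhood, which by compactness still contains none, and then applies the all-open version --- but this is standard, and your application of it (with a set to spare, since you cover $S^{k+1}$ by only $k+1$ sets) is sound. In short: where the paper buys brevity by citing \cite{Lovasz}, your proof buys self-containment at the cost of importing the Borsuk--Ulam theorem.
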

We also need the following elementary lemma, which we prove as a very special case of the Poincar{\'e} Recurrence Theorem.
\begin{lemma}\label{lemmaPoincare}
  For $k< n \in \mathbb N$, if $G_{2}^{(n)} = \bigcup_{j=1}^{k} A_{j}$, then
  \[
  (A_{j}-A_{j})\cap U(n,2k+2) \notin \{\varnothing,\{0\}\}
  \]
  for some $j \leq k$.
\end{lemma}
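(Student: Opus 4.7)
My plan is to prove this by a two-step pigeonhole argument, which is what the authors signal by calling this a special case of the Poincar\'{e} Recurrence Theorem: on the finite group $G_{2}^{(n)}$ equipped with normalized counting measure, any subset of density at least $1/k$ cannot be sent to $k+1$ pairwise-disjoint translates. The plan is to apply this finite recurrence principle after replacing an abstract transformation by translation by elements of small Hamming weight.

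First, I will apply pigeonhole to the partition $G_{2}^{(n)} = \bigcup_{j=1}^{k} A_{j}$ to fix $j \leq k$ with $|A_{j}| \geq |G_{2}^{(n)}|/k$. Next, I will select $k+1$ distinct elements $h_{0}, h_{1}, \dots, h_{k} \in U(n, k+1)$: take $h_{0} = \mathbf{0}$, and for $1 \leq i \leq k$ let $h_{i} \in G_{2}^{(n)}$ be the function that equals $1$ on a single cylinder set $[\tau_{i}]$ and $0$ elsewhere, with the $\tau_{i} \in \Omega_{n}$ chosen distinct. Then $|h_{i}^{-1}(1)|_{n} = 1$, so each $h_{i}$ lies in $U(n,1) \subseteq U(n,k+1)$, and the $h_{i}$ are distinct because the $\tau_{i}$ are. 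This selection is possible because $|\Omega_{n}| = 2^{n} \geq n > k$, which is precisely where the hypothesis $k < n$ is used. Now the $k+1$ translates $h_{0} + A_{j}, \dots, h_{k} + A_{j}$ have total cardinality $(k+1)|A_{j}| > |G_{2}^{(n)}|$, so two of them must overlap: there exist $i \neq i'$ with $(h_{i} + A_{j}) \cap (h_{i'} + A_{j}) \neq \varnothing$, yielding a nonzero element $h_{i} - h_{i'} \in A_{j} - A_{j}$.

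It remains to verify that $h_{i} - h_{i'} \in U(n, 2k+2)$. Since $G_{2}$ has characteristic two, $h_{i} - h_{i'} = h_{i} + h_{i'}$, and its nonzero set is contained in the union of the nonzero sets of $h_{i}$ and $h_{i'}$, so by Observation \ref{obsContent} its $|\cdot|_{n}$-weight is at most $(k+1) + (k+1) = 2k+2$. Thus $h_{i} - h_{i'} \in \bigl(U(n,2k+2) \cap (A_{j} - A_{j})\bigr) \setminus \{\mathbf{0}\}$, which is exactly the conclusion. The argument has no substantive obstacle; the only thing to be checked is the counting bound $k+1 \leq |U(n,k+1)|$, and this follows immediately from $k < n \leq 2^{n}$.
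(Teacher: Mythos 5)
Your proof is correct and is essentially the paper's argument: fix $A_{j}$ with $|A_{j}|\geq |G_{2}^{(n)}|/k$, translate it by $k+1$ distinct elements of small Hamming weight, and use pigeonhole to force two translates to intersect, producing a nonzero difference of weight at most $2k+2$. The only cosmetic difference is that you pick explicit weight-$\leq 1$ elements (so your difference even lands in $U(n,2)$), whereas the paper translates by arbitrary elements of $U(n,k+1)$ using $|U(n,k+1)|\geq k+1$; the mechanism is the same.
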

\begin{proof}
For some $j$ we have $|A_{j}| \geq \frac{1}{k}|G_{2}^{(n)}|$; fix such a $j$.  Note that $U(n,k+1),U(n,2k+2)\subseteq G_{2}^{(n)}$, $U(n,2k+2)$ contains the difference set $U(n,k+1)-U(n,k+1)$, and $|U(n,k+1)| \geq k+1$.  The sets $A_{j}+u$, $u\in U(n,k+1)$ cannot all be mutually disjoint, since that would imply
\[
|G_{2}^{(n)}| \geq |U(n,k+1)|\cdot |A_{j}| \geq \tfrac{k+1}{k}|G_{2}^{(n)}|> |G_{2}^{(n)}|.
\]  Hence there exist $u_{1} \neq u_{2} \in U(n,k+1)$ such that $(A_{j}+u_{1})\cap (A_{j}+u_{2}) \neq \varnothing$, meaning there exist $a, b\in A_{j}$ such that $a-b = u_{1}-u_{2}$.  Since $u_{1}\neq u_{2}$ and $u_{1}-u_{2} \in U(n,2k+2)$, we have shown that $(A_{j}-A_{j})\cap U(n,2k+2)\notin \{\varnothing, \{0\}\}$.
\end{proof}

\begin{proof}[Proof of Lemma \ref{lemmaChromatic}]
We will prove the following.

\begin{claim}  Fix $k< n \in \mathbb N$.  If $g\in G_{2}^{(n)}$ and $G_{2}^{(n)} = \bigcup_{j=1}^{k} A_{j}$, then for some $j\leq k$ we have $(A_{j}-A_{j}) \cap (g+U(n,3k+3)) \notin \{\varnothing,\{0\}\}$.
\end{claim}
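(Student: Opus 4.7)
The plan is to dichotomize on the weight $w(g):=|g^{-1}(1)|_n$ of $g$. When $w(g)\leq k+1$, the conclusion follows from Lemma \ref{lemmaPoincare} via a triangle-inequality estimate on symmetric differences; when $w(g)\geq k+2$, it follows from Theorem \ref{thmLovasz} applied to the $r$-element subsets of $g^{-1}(1)$. Throughout I identify $G_{2}^{(n)}$ with $(\mathbb F_{2})^{\Omega_{n}}$ as in Section \ref{secPresentation}, so that each $h\in G_{2}^{(n)}$ is identified with $h^{-1}(1)\subseteq\Omega_{n}$, the group operation corresponds to symmetric difference of these sets, and $U(n,m)$ is the set of $h$ with $|h^{-1}(1)|\leq m$.

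Suppose first that $w(g)\leq k+1$. Lemma \ref{lemmaPoincare} produces $j\leq k$ and a nonzero $h\in (A_{j}-A_{j})\cap U(n,2k+2)$. Since $h-g=h+g$, the support of $h-g$ is contained in $h^{-1}(1)\cup g^{-1}(1)$, so $w(h-g)\leq w(h)+w(g)\leq (2k+2)+(k+1)=3k+3$. Thus $h\in g+U(n,3k+3)$, and since $h\neq 0$ it is the required nonzero element of $(A_{j}-A_{j})\cap(g+U(n,3k+3))$.

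Now suppose $w(g)\geq k+2$. Set $r:=\lfloor (w(g)-k)/2\rfloor\geq 1$ and fix a subset $E'\subseteq g^{-1}(1)$ with $|E'|=2r+k$. Identifying $E'$ with $\{1,\dots,2r+k\}$, color each $r$-element subset $e\subseteq E'$ by the unique $j\leq k$ such that the indicator function $\chi_{e}\in G_{2}^{(n)}$ (zero off $E'$) lies in $A_{j}$. Theorem \ref{thmLovasz} yields disjoint $r$-subsets $e_{1},e_{2}\subseteq E'$ of common color $j$. In characteristic $2$, $\chi_{e_{1}}-\chi_{e_{2}}=\chi_{e_{1}\cup e_{2}}$ is nonzero, and since $e_{1}\cup e_{2}\subseteq E'\subseteq g^{-1}(1)$, the symmetric difference of $e_{1}\cup e_{2}$ with $g^{-1}(1)$ equals $g^{-1}(1)\setminus (e_{1}\cup e_{2})$, of size $w(g)-2r\leq k+1$. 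Hence $\chi_{e_{1}}-\chi_{e_{2}}\in g+U(n,k+1)\subseteq g+U(n,3k+3)$, giving the required nonzero element of $(A_{j}-A_{j})\cap (g+U(n,3k+3))$.

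The conceptual heart is the Lov\'{a}sz step: to produce a difference centered at $g$ rather than at $\mathbf{1}$, I run the standard construction used for $V(n,k)$ inside $g^{-1}(1)$ rather than inside all of $\Omega_{n}$. The threshold $w(g)\geq k+2$ is exactly what makes $r\geq 1$, so that Theorem \ref{thmLovasz} applies nontrivially, and the residual small-weight case is absorbed by Lemma \ref{lemmaPoincare} with room to spare (in particular the bound $3k+3$ is not tight; the proof actually delivers a nonzero element of $(A_{j}-A_{j})\cap(g+U(n,2k+2))$).
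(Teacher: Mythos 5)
Your proof is correct and takes essentially the same route as the paper's: the identical dichotomy on $|g^{-1}(1)|_{n}$ (at most $k+1$ versus at least $k+2$), with Lemma \ref{lemmaPoincare} handling the low-weight case and Theorem \ref{thmLovasz} applied to $r$-element subsets of $g^{-1}(1)$ in the high-weight case to produce a nonzero element of $(A_{j}-A_{j})\cap(g+U(n,k+1))$. One minor caveat: your closing parenthetical that the argument always yields an element of $g+U(n,2k+2)$ is justified only in the high-weight case, since in the low-weight case you control only $|h^{-1}(1)|_{n}+|g^{-1}(1)|_{n}\leq 3k+3$.
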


\begin{proof}[Proof of Claim] Fix $g\in G_{2}^{(n)}$, $k\in \mathbb N$, and a partition $G_{2}^{(n)}=\bigcup_{j=1}^{k} A_{j}$. Let $X_{1}=g^{-1}(1)$, so that $X_{1}\subseteq \pi_{n}^{-1}(\Omega_{n})\subset \Omega$. We consider two cases based on $|X_{1}|_{n}$ (defined in Section \ref{secCylinders}).

\smallskip

\noindent \textbf{Case 1.  $|X_{1}|_{n}\geq 2+k$.}  In this case write $|X_{1}|_{n} = 2r+k$ if $|X_{1}|_{n}-k$ is even and write $|X_{1}|_{n}=2r+k+1$ if $|X_{1}|_{n}-k$ is odd, where $r\in \mathbb N$.  Let
\[
  D: = \{h\in G_{2}^{(n)}: h^{-1}(1)\subseteq X_{1} \text{ and } |h^{-1}(1)|_{n} = r\}
\]
Identify $D$ with the $r$-element subsets of $\{\tau\in \Omega_{n}: [\tau]\subset X_{1}\}$, where the identification is given by $h\leftrightarrow \{\tau\in \Omega_{n}:h([\tau]) = \{1\} \}$.  For $1\leq j \leq k$, let $D_{j}=A_{j}\cap D$, so that $D=\bigcup_{j=1}^{k} D_{j}$.  Theorem \ref{thmLovasz} implies that some $D_{j}$ contains two disjoint elements of $D$, say $h_{1}$ and $h_{2}$.  The difference $h_{1}-h_{2}$ satisfies $(h_{1}-h_{2})(\omega) = 0$ for $\omega\notin X_{1}$, and $|(h_{1}-h_{2})^{-1}(1)|_{n}=2r$, meaning $(h_{1}-h_{2})([\tau]) \neq g([\tau])$ for at most $k+1$ values of $\tau \in \Omega_{n}$.  It follows that
\[
h_{1}-h_{2} \in (g+U(n,k+1)) \subseteq (g+U(n,3k+3)),
\] and $h_{1}-h_{2}\neq 0$.   Since $h_{1}, h_{2}\in D_{j} \subseteq A_{j}$, this concludes the proof of the Claim in Case 1.

\smallskip
\noindent  \textbf{Case 2. $|X_{1}|_{n}<2+k$.} In this case $g+U(n,3k+3)$ contains the Hamming ball $U(n,2k+2)$, which has the desired property, by Lemma \ref{lemmaPoincare}.  This completes the proof of the Claim.  \end{proof}

To complete the proof of Lemma \ref{lemmaChromatic}, let $(n_{i})_{i\in \mathbb N}, (k_{i})_{i\in \mathbb N}$, and $(g_{i})_{i\in \mathbb N}$ be as in the statement of the lemma.  Let $R:=\bigcup_{i=1}^{\infty} g_{i}+U(n_{i},k_{i})$, $k\in \mathbb N$, and let $G_{2} = \bigcup_{j=1}^{k} A_{j}$ be a partition of $G_{2}$. Fix $i\in \mathbb N$ so that $n_{i}, k_{i}\geq  3k+3$.   Let $A_{j}' = A_{j}\cap G_{2}^{(n_{i})}$ for each $j\leq k$.  By the Claim, there exists $j\leq k$ such that $(A_{j}'-A_{j}')\cap (g_{i}+U(n_{i},k_{i})) \notin \{\varnothing,\{0\}\}$.  Consequently $(A_{j}-A_{j}) \cap R \notin \{\varnothing, \{0\}\}$.  Since the partition of $G_{2}$ was arbitrary, we have shown that $R\setminus \{0\}$ is a set of chromatic recurrence, as desired.  Proposition \ref{propositionCorrespondence} then implies $R\setminus \{0\}$ is a set of topological recurrence.

To prove that every translate of $S:=\bigcup_{i=1}^{\infty} V(n_{i},k_{i})$ is a set of topological recurrence, let $g\in G_{2}$ and choose $j$ sufficiently large that $g\in G_{2}^{(n_{j})}$.  Then $g+S \supseteq \bigcup_{i=j}^{\infty} g+\mathbf{1}+U(n_{i},k_{i})$, which is a set of topological recurrence, by the preceding paragraph. \end{proof}

\begin{problem}\label{problemHamming}
  Let $p$ be an odd prime and let $V(n,k)\subseteq G_{p}$ be as defined in Section \ref{sectionHammingBalls}.  Prove or disprove:
   \begin{center} \label{star}$(*)$ \hspace{.05in}\begin{minipage}{.9\textwidth} If $n_{i}, k_{i}\to \infty$, then every translate of $S:=\bigcup_{i=1}^{\infty} V(n_{i},k_{i})$ is a set of topological recurrence. \end{minipage}\end{center}
\end{problem}
If the statement ($*$) is true for a given $p$, then the results of \cite{GriesmerBohr} imply $(R_{5}^{\bullet})\cnot\implies (R_{4})$ for $G_{p}$, and that there are sets $S, A\subseteq G_{p}$ having $d^{*}(A)>0$ and every translate of $S$ is a set of topological recurrence, while $S+A$ is not piecewise syndetic.  This would provide a negative answer to Part (iii) of Question \ref{questionMain} and both parts of Question \ref{questionSumsets} for $G=G_{p}$.  If the statement ($*$) is false for some odd prime $p$, the results of \cite{GriesmerBohr} provide an example showing that $(R_{6}^{\bullet})\cnot\implies (R_{5})$ for the corresponding $G_{p}$, giving a negative answer to Part (i) of Question \ref{questionMain}.

\subsection{Some dense subsets of \texorpdfstring{$G_{2}$}{G2}.}

Our second task is to construct the sets $A$ of Theorems \ref{theoremMainDensity} and \ref{theoremPWsynd}.  We will find, for each $\varepsilon>0$, a set $A$ such that $d^{*}(A)>\frac{1}{2}-\varepsilon$, while $(A-A)\cap S = \varnothing$ for some $S=\bigcup_{j=1}^{\infty} V(n_{j},k_{j})$ where $n_{j}\to \infty$, $k_{j}\to \infty$.  Exhibiting such $A$ and $S$ will prove Theorem \ref{theoremMainDensity}, as Lemma \ref{lemmaChromatic} implies $(B-B+g)\cap S\neq \varnothing$ whenever $B$ is piecewise syndetic and $g\in G_{2}$.  We will also show that $(A'-A')\cap S =\varnothing$, where $A'=A+S$, leading to a proof of Theorem \ref{theoremPWsynd}.

The following definition uses the notation $|\cdot|_{n}$ defined in Section \ref{secConvention}.

\begin{definition}\label{definitionBase} For $n, m \in \mathbb N$ and $i\in \mathbb Z/2\mathbb Z$, let
\[A_{i}(n,m):= \{g\in G_{2}^{(n)}: |g^{-1}(i)|_{n} > \tfrac{1}{2}|\Omega_{n}|+m\}.\]
So $A_{i}(n,m)$ is the set of functions $g: \Omega\to \mathbb Z/2\mathbb Z$ which are constant on the cylinder sets $[\tau]$ for $\tau \in \Omega_{n}$, and the number of $\tau\in \Omega_{n}$ such that $g([\tau]) = \{i\}$ is greater than $\tfrac{1}{2}|\Omega_{n}|+m$.
\end{definition}

\begin{remark}
  The $A_{i}(n,m)$ are examples of niveau sets, first explicitly used in additive combinatorics by Ruzsa in \cite{RuzsaComponents, RuzsaProgressions}; see \cite{WolfPopular} for exposition and an application.

  While the $A_{0}(n,m)$ are essentially the Hamming balls $U(n,k)$, where $k=\frac{1}{2}|\Omega_{n}|-m$, we do not treat them as such.  Instead we view the $A_{i}(n,m)$ as the base case of the inductive Definition \ref{definitionStep}.
\end{remark}

\begin{remark} Letting $Z(n,m) = G_{2}^{(n)}\setminus (A_{0}(n,m)\cup A_{1}(n,m))$, we have a partition of $G_{2}^{(n)}$ into three sets  (it is easy to check that $A_{0}(n,m)$ and $A_{1}(n,m)$ are disjoint).  When $n$ is very large compared to $m$, $|A_{i}(n,m)|$ is close to $\frac{1}{2}|G_{2}^{(n)}|$, while $(A_{i}(n,m)-A_{i}(n,m))\cap V(n,m) = \varnothing$, as we shall prove.  In light of these facts and Lemma \ref{lemmaChromatic} we have a natural candidate for the set $A$ in Theorem \ref{theoremMainDensity}, namely $\bigcup_{j=1}^{\infty} A_{1}(n_{j},m_{j})$, where the $n_{j}\to \infty$ rapidly and the $m_{j}\to \infty$ slowly.  But this choice of $A$ will not work: setting
\begin{align*}
A': = A_{1}(n_{1},m_{1}) \cup A_{2}(n_{2},m_{2}), && S':=  V(n_{1},m_{1}) \cup V(n_{2},m_{2}),
\end{align*}
the desired disjointness $(A'-A') \cap S' = \varnothing$ is not true in general.  Instead of sets such as $A'$, we use sets which are easily understood in terms of their translates by elements of $V(n_{1},m_{1})\cup V(n_{2},m_{2})$.  Considering elements $g\in G_{2}^{(n_{2})}$ as functions $g:\Omega_{n_{1}} \to G_{2}^{(n_{2}-n_{1})}$, we think of  $A_{i}(n_{2}-n_{1},m_{2})$ as playing the role of $i\in \mathbb Z/2\mathbb Z$, and we let $A_{i}((n_{1},m_{1}),(n_{2},m_{2}))$ be the set of $g\in G_{2}^{(n_{2})}$ satisfying $g|_{\tau}\in A_{i}(n_{2}-n_{1},m_{2})$ for greater than $m_{1}$ values of $\tau \in \Omega_{n_{1}}$.  As we shall see in Lemma \ref{lemmaProperties}, it is easy to understand $g+v$ for  $g\in A_{i}((n_{1},m_{1}),(n_{2},m_{2}))$ and $v\in V(n_{1},m_{1})\cup V(n_{2},m_{2})$.  If $n_{2}-n_{1}$ is very large compared to $m_{2}$, then $|A_{i}(n_{2}-n_{1},m_{2})|$ is very close to $\frac{1}{2}|G_{2}^{(n_{2}-n_{1})}|$, so estimating $|A_{i}((n_{1},m_{1}),(n_{2},m_{2}))|$ is not difficult.  Furthermore, this construction can be iterated to produce sets $A \subseteq G_{2}^{(n_{l})}$ whose translates by elements of $V(n_{1},m_{1}) \cup \dots \cup V(n_{l},m_{l})$ are easily understood.
\end{remark}

The following lemma summarizes the relevant properties of the sets $A_{i}(n,m)$.  Recall from Section \ref{secPresentation} that $\mb 1$ denotes the element $g\in G_{2}$ having $g(\omega)=1\in \mathbb Z/2\mathbb Z$ for all $\omega \in \Omega$.

\begin{lemma}\label{lemmaBase}
  For all $n,m,\in \mathbb N$, $k\in \mathbb N\cup \{0\}$, $k< m$, and $i\in \mathbb Z/2\mathbb Z$, we have

  \begin{enumerate}
    \item[(i)]  $A_{i}(n,m) + \mathbf 1 = A_{i+1}(n,m)$,
\smallskip
    \item[(ii)] $A_{i}(n,m) + U(n,k) \subseteq A_{i}(n,m-k)$,
\smallskip
    \item[(iii)] $A_{i}(n,m) + U(n,k)+\mathbf 1 \subseteq A_{i+1}(n,m-k)$,
    \smallskip
    \item[(iv)] $A_{i}(n,m) \cap A_{i+1}(n,m-k) = \varnothing$,
    \smallskip
    \item[(v)] If $m'< m$, then $A_{i}(n,m) \subseteq A_{i}(n,m')$.
  \end{enumerate}
\end{lemma}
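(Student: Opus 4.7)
The plan is to verify each part directly from the definitions in Section \ref{secConvention} and Observation \ref{obsContent}, treating (i) and (v) as immediate, (ii) as the essential counting step, (iii) as a corollary of (i) and (ii), and (iv) as a pigeonhole-style inequality.

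For (i), I would observe that $(g+\mathbf{1})(\omega) = g(\omega) + 1$ for every $\omega \in \Omega$, so $(g+\mathbf{1})^{-1}(i+1) = g^{-1}(i)$. Hence $|(g+\mathbf{1})^{-1}(i+1)|_n = |g^{-1}(i)|_n$, and the defining inequality for $A_i(n,m)$ transfers directly to the defining inequality for $A_{i+1}(n,m)$. Part (v) is even more trivial: the condition $|g^{-1}(i)|_n > \tfrac{1}{2}|\Omega_n| + m$ is strictly stronger than $|g^{-1}(i)|_n > \tfrac{1}{2}|\Omega_n| + m'$ when $m' < m$.

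The work in (ii) is to control how much $g+u$ can differ from $g$ when $u \in U(n,k)$. By definition of $U(n,k)$, the set $u^{-1}(\mathbb{F}_2 \setminus \{0\})$ is covered by at most $k$ of the cylinders $[\tau]$ with $\tau \in \Omega_n$; off of these cylinders, $(g+u)(\omega) = g(\omega)$. Therefore the symmetric difference $g^{-1}(i)\triangle(g+u)^{-1}(i)$ is also covered by at most $k$ cylinders at level $n$, so
\[
|(g+u)^{-1}(i)|_n \;\geq\; |g^{-1}(i)|_n - k \;>\; \tfrac{1}{2}|\Omega_n| + m - k,
\]
which shows $g+u \in A_i(n,m-k)$. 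Part (iii) then follows by applying (i) after (ii): $A_i(n,m) + U(n,k) + \mathbf{1} \subseteq A_i(n,m-k) + \mathbf{1} = A_{i+1}(n,m-k)$.

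The only step with any subtlety is (iv), where I need to see that the hypothesis $k < m$ exactly suffices. Assume for contradiction that $g \in A_i(n,m) \cap A_{i+1}(n,m-k)$. Since $g$ takes values in $\mathbb{F}_2$, the sets $g^{-1}(0)$ and $g^{-1}(1)$ partition $\Omega$; applying (C3) gives $|g^{-1}(0)|_n + |g^{-1}(1)|_n = |\Omega|_n = |\Omega_n|$. Adding the two defining strict inequalities yields
\[
|\Omega_n| \;=\; |g^{-1}(i)|_n + |g^{-1}(i+1)|_n \;>\; \bigl(\tfrac{1}{2}|\Omega_n| + m\bigr) + \bigl(\tfrac{1}{2}|\Omega_n| + m - k\bigr) \;=\; |\Omega_n| + 2m - k,
\]
forcing $k > 2m$, which contradicts $k < m$ (since $m \geq 1$). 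This is the main, and only real, obstacle; everything else is bookkeeping, and I expect the whole lemma to occupy well under a page.
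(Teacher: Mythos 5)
Your proposal is correct and follows essentially the same route as the paper: (i) and (v) read off from the definition, (ii) by noting $g+u$ agrees with $g$ on all but at most $k$ level-$n$ cylinders so $|(g+u)^{-1}(i)|_{n}\geq |g^{-1}(i)|_{n}-k$, (iii) by composing (i) and (ii), and (iv) by adding the two defining inequalities for the disjoint sets $g^{-1}(i)$ and $g^{-1}(i+1)$ to reach a contradiction with $k<m$. No gaps; the bookkeeping (e.g.\ invoking (C3) and $m-k\geq 1$) is handled as in the paper.
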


\begin{proof}
  Part (i) follows from the definition of $A_{i}(n,m)$ and the fact that $(g+\mathbf{1})^{-1}(i) = g^{-1}(i+1)$ for all $g\in G_{2}$.

  To prove Part (ii), note that for each $g\in G_{2}^{(n)}$, $u\in U(n,k)$,  we have $g([\tau]) = (g+u)([\tau])$ for at least $|\Omega_{n}|-k$ values of $\tau\in \Omega_{n}$.  We then have $|(g+u)^{-1}(i)|_{n} \geq |g^{-1}(i)|_{n}-k$ for every such $g$, and Part (ii) now follows from the definition of $A(n,m)$.

  Part (iii) follows directly from parts (i) and (ii).

  To prove Part (iv) we exploit Observation \ref{obsContent}.  Assume, to get a contradiction, that $A_{i}(n,m) \cap A_{i+1}(n,m-k) \neq  \varnothing$, and let $g$ be an element of the intersection.  We then have $|g^{-1}(i)|_{n} \geq \frac{1}{2}|\Omega_{n}|+m$ and $|g^{-1}(i+1)|_{n}\geq \frac{1}{2}|\Omega_{n}| + m-k$.  Since $g^{-1}(i) \cap g^{-1}(i+1) = \varnothing$, we then have $|\Omega_{n}| \geq |\Omega_{n}|+2m-k$, a contradiction.

  Part (v) follows immediately from the definition of $A_{i}(n,m)$. \end{proof}

\begin{remark}
  The $A_{i}(n,m)$ are Hamming balls of radius $\frac{1}{2}|\Omega_{n}|-m-1$ around $i\mathbf{1}\in G_{2}^{(n)}$, so Parts (ii) and (iii) of Lemma \ref{lemmaBase} are consequences of the fact that $U(n,k)+U(n,k')= U(n,k+k')$.
\end{remark}

In the following definition we will use the restrictions $g|_{\tau}$, defined in Section \ref{secCylinders}.

\begin{definition}\label{definitionStep} For all  $l\in \mathbb N$, $i\in \mathbb Z/2\mathbb Z$, and sequences of $l$ pairs of natural numbers $\mathbf n_{l}=(n_{1},m_{1}),\dots, (n_{l},m_{l})$ where $n_{1}< \dots < n_{l}$, we will define a set $A_{i}(\mathbf n_{l}) \subseteq G_{2}^{(n_{l})}$.  For $l=1$, $A_{i}(n_{1},m_{1})$ is defined in Definition \ref{definitionBase}. For $l\geq 2$, we  define $A_{i}(\mathbf n_{l})$ inductively, assuming $A_{i}((n_{1}',m_{1}'), \dots, (n_{l-1}',m_{l-1}'))$ is defined for every sequence of pairs where $n_{1}' < \dots < n_{l-1}' $. In particular, $A_{i}(\bar{\bn}_{l-1})$ is defined, where $\bar{\bn}_{l-1}:=(n_{2}-n_{1},m_{2}),\dots,(n_{l}-n_{1},m_{l})$.  For each $i\in \mathbb Z/2\mathbb Z$, define $A_{i}((n_{1},m_{1}),\dots, (n_{l},m_{l}))$ to be the set of $g\in G_{2}^{(n_{l})}$ such that
\begin{align}
\label{eqnBias}  |\{\tau \in \Omega_{n_{1}}: g|_{\tau} \in A_{i}(\bar{\bn}_{l-1})\}| > \tfrac{1}{2}|\Omega_{n_{1}}|+m_{1}.
\end{align}
\end{definition}
For example, $A_{1}((10,3),(50,7))$ is the set of $g\in G_{2}^{(50)}$ such that $g|_{\tau} \in A_{1}(40,7)$ for  $> \frac{1}{2}|\Omega_{10}|+3$ values of $\tau \in \Omega_{10}$.

\begin{remark}
The proofs of Theorems \ref{theoremMainDensity} and \ref{theoremPWsynd} are a straightforward exploitation of Definition \ref{definitionStep}. The remaining proofs in this subsection are tedious due to the inductive nature of the definition.
\end{remark}

We adopt the following conventions in the sequel.
\begin{enumerate}
  \item[$\bullet$] The symbol $\bn_{l}$ abbreviates the symbol $(n_{1},m_{1}),\dots,(n_{l},m_{l})$.

\item[$\bullet$] The symbol $\bar{\bn}_{l-1}$ abbreviates $(n_{2}-n_{1},m_{2}),\dots,(n_{l}-n_{1},m_{l})$.
 \end{enumerate}
\begin{lemma}\label{lemmaProperties} For all $l\in \mathbb N$,  sequences $\mathbf n_{l}= (n_{1},m_{1}),\dots, (n_{l},m_{l})$ with $n_{j}, m_{j}\in \mathbb N$, $n_{1}<\dots < n_{l}$, $i\in \mathbb Z/2\mathbb Z$, $j\in \{1,\dots, l\}$, $k_{j} \in \mathbb N\cup \{0\}$, $k_{j}< m_{j}$ we have

\begin{enumerate}
  \item[(i)] $A_{i}(\bn_{l})+\mathbf 1 = A_{i+1}(\bn_{l})$,
\smallskip
\item[(ii)] if $u\in U(n_{j},k_{j})$, then
\[A_{i}(\bn_{l})+u \subseteq A_{i}((n_{1},m_{1}), \dots, (n_{j},m_{j}-k_{j}), \dots, (n_{l},m_{l})),\]

\item[(iii)]  if $u\in U(n_{j},k_{j})$, then
\begin{align*}
  A_{i}(\bn_{l})+u+\mathbf 1 \subseteq A_{i+1}((n_{1},m_{1}), \dots, (n_{j},m_{j}-k_{j}), \dots, (n_{l},m_{l})),
\end{align*}

\item[(iv)]  the sets $A_{i}(\bn_{l})$ and
\begin{align*}
   A_{i+1}':=A_{i+1}((n_{1},m_{1}), \dots, (n_{j},m_{j}-k_{j}), \dots, (n_{l},m_{l}))
\end{align*}
are disjoint.   In particular $A_{0}(\bn_{l})\cap A_{1}(\bn_{l}) = \varnothing$.

\item[(v)]  If $m_{j}' \leq m_{j}$ then
\[A_{i}(\bn_{l})\subseteq A_{i}((n_{1},m_{1}),\dots,(n_{j},m_{j}'),\dots, (n_{l},m_{l})).\]

\end{enumerate}

\end{lemma}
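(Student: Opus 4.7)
The plan is to induct on $l$, with $l=1$ already covered by Lemma \ref{lemmaBase}. For the inductive step I unpack Definition \ref{definitionStep}: membership $g \in A_i(\bn_l)$ is controlled by the set
\[T_i(g) := \{\tau \in \Omega_{n_1} : g|_\tau \in A_i(\bar{\bn}_{l-1})\},\]
which must satisfy $|T_i(g)| > \tfrac{1}{2}|\Omega_{n_1}| + m_1$. The strategy is to apply the inductive hypothesis to the restrictions $g|_\tau$ and then repackage the conclusion in terms of $T_i$.

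Parts (i), (iii), and (v) are nearly immediate. For (i), the identity $(g+\mathbf{1})|_\tau = g|_\tau + \mathbf{1}$ combined with the inductive (i) gives $T_i(g) = T_{i+1}(g+\mathbf{1})$. For (v), the case $j=1$ just weakens the threshold on $|T_i|$, while $j>1$ uses the inductive (v) to enlarge each inner set $A_i(\bar{\bn}_{l-1})$ when $m_j$ drops, enlarging $T_i(g)$ correspondingly. Part (iii) is the composition of (i) and (ii).

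The crux is (ii), which I split on whether $j=1$ or $j>1$. For $j=1$, the element $u \in U(n_1,k_1)$ is constant on each $[\tau]$ with $\tau \in \Omega_{n_1}$ and equals $\mathbf{0}$ on all but at most $k_1$ of them; on the remaining cylinders $(g+u)|_\tau = g|_\tau$, so $T_i(g+u) \supseteq T_i(g) \setminus \{\tau : u([\tau])=\{1\}\}$ has size exceeding $\tfrac{1}{2}|\Omega_{n_1}| + m_1 - k_1$. For $j>1$, I decompose $u$ into its restrictions $u|_\tau \in G_2^{(n_j-n_1)}$; setting $k_\tau := |(u|_\tau)^{-1}(1)|_{n_j-n_1}$ gives $u|_\tau \in U(n_j-n_1, k_\tau)$ with $\sum_\tau k_\tau \leq k_j$, so in particular each $k_\tau \leq k_j$. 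The inductive (ii) places $g|_\tau + u|_\tau$ in $A_i$ with $m_j$ reduced to $m_j - k_\tau$, and the inductive (v) then drops this into the set with the uniform loss $m_j - k_j$. This passage from the $\tau$-dependent loss to the uniform one via monotonicity is the main technical step, though it is essentially bookkeeping once the two inductive conclusions are aligned; with all restrictions landing in the same modified inner set, $T_i(g)$ transfers intact to $T_i(g+u)$ against the modified sequence.

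Finally (iv) is a short contradiction. If $g$ lay in both $A_i(\bn_l)$ and the weakened $A_{i+1}'$, the inductive (iv) would force the two index sets in $\Omega_{n_1}$ produced by Definition \ref{definitionStep} to be disjoint. For $j=1$, summing the two lower bounds gives $|\Omega_{n_1}| \geq |\Omega_{n_1}| + 2m_1 - k_1 + 2$, forcing $k_1 \geq 2m_1+2$ and contradicting $k_1 < m_1$; for $j>1$ the loss $k_j$ is absorbed inside $\bar{\bn}_{l-1}$ while both outer thresholds stay at $\tfrac{1}{2}|\Omega_{n_1}|+m_1$, yielding the sharper $0 \geq 2m_1+2$. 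The second assertion $A_0(\bn_l) \cap A_1(\bn_l) = \varnothing$ is (iv) applied with all $k_j = 0$.
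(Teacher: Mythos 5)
Your proposal is correct and follows essentially the same route as the paper: induction on $l$ with Lemma \ref{lemmaBase} as base case, unpacking Definition \ref{definitionStep} via the index set of good $\tau\in\Omega_{n_1}$, the $j=1$ versus $j>1$ case split for (ii) and (iv), (iii) from (i) and (ii), and the disjointness-plus-counting contradiction for (iv). The only cosmetic difference is in (ii) for $j>1$: you track the exact per-cylinder counts $k_\tau$ and then uniformize with (v), whereas the paper simply notes $u|_\tau\in U(n_j-n_1,k_j)$ and applies the inductive (ii) once; both are fine.
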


\begin{proof}
We prove each of these statements by induction on $l$.  For each of Parts (i)-(v), the base case of the induction is $l=1$, which is the corresponding part of Lemma \ref{lemmaBase}.  We now fix $l\in \mathbb N$, $l>1$.  For the induction hypothesis, we assume each of (i) - (v) holds for all sequences $(n_{1}',m_{1}'),\dots, (n_{l-1}',m_{l-1}')$, where $n_{j}'$, $m_{j}' \in \mathbb N$ and $n_{1}' < \dots < n_{l-1}'$.  In particular, given a sequence $\bn_{l}=(n_{1},m_{1}),\dots, (n_{l},m_{l})$ of length $l$, each of (i)-(v) holds for the sequence $\bar{\bn}_{l-1}= (n_{2}-n_{1},m_{2}),\dots, (n_{l}-n_{1}, m_{l})$ of length $l-1$.

To prove (i), let $g\in A_{i}(\bn_{l})$, so that $g|_{\tau} \in A_{i}(\bar{\bn}_{l-1})$ for greater than $\frac{1}{2}|\Omega_{n_{1}}|+m_{1}$ values of $\tau\in \Omega_{n_{1}}$.  For each such $\tau$, $(g+\mathbf{1})|_{\tau} \in A_{i+1}(\bar{\bn}_{l-1})$, by the induction hypothesis.  Then $(g+\mathbf{1})|_{\tau} \in A_{i+1}(\bar{\bn}_{l-1})$ for greater than $\frac{1}{2}|\Omega_{n_{1}}|+m_{1}$ values of $\tau \in \Omega_{n_{1}}$, so $g+\mathbf{1} \in A_{i+1}(\bn_{l})$ by definition.  By symmetry we conclude that if $g\in A_{i+1}(\bn_{l})$, then $g-\mathbf{1}\in A_{i}(\bn_{l})$, and we conclude that $A_{i}(\bn_{l})+\mathbf{1}=A_{i+1}(\bn_{l})$.

To prove (ii) we consider two cases.

\noindent \textbf{Case 1:  $j=1$.}  Let $g\in A_{i}(\bn_{l})$ and $u\in U(n_{1},k_{1})$.  Then $g|_{\tau}\in A_{i}(\bar{\bn}_{l-1})$ for greater than $m_{1}$ values of $\tau\in \Omega_{n_{1}}$, while $(g+u)|_{\tau} = g|_{\tau}$ for all but $k_{1}$ values of $\tau\in \Omega_{n_{1}}$.  It follows that \[
|\{\tau \in \Omega_{n_{1}}: (g+u)|_{\tau} \in A_{i}(\bar{\bn}_{l-1})\}|>m_{1}-k_{1},
\] so $g+u \in A_{i}((n_{1},m_{1}-k_{1}),\dots, (n_{l},m_{l}))$, by definition.

\noindent \textbf{Case 2: $2 \leq j \leq l$.}  Again let $g\in A_{i}(\bn_{l})$ and $u\in U(n_{j},k_{j})$. We have $u|_{\tau} \in U(n_{j}-n_{1}, k_{j})$ for each $\tau \in \Omega_{n_{1}}$.  Then for each such $\tau$ where $g|_{\tau} \in A_{i}(\bar{\bn}_{l-1})$, the induction hypothesis implies
\[(g+u)|_{\tau} \in A_{i}((n_{2}-n_{1},m_{2}), \dots, (n_{j}-n_{1}, m_{j}-k_{j}), \dots, (n_{l}-n_{1},m_{l})).\]  The above inclusion then occurs for $>\frac{1}{2}|\Omega_{n_{1}}|+m_{1}$ values of $\tau\in \Omega_{n_{1}}$, so $g+u \in A_{i}((n_{1},m_{1}),\dots, (n_{j}, m_{j}-k_{j}), \dots, (n_{l},m_{l}))$.  This completes the proof of Part (ii).

Part (iii) follows immediately from Parts (i) and (ii).

We prove Part (iv) by induction on $l$.  The base case of the induction is $l=1$, which is Part (iv) of Lemma \ref{lemmaBase}. For the induction step we set $i=1$, as the case $i=0$ follows by symmetry.  Let $l>1$ and assume, to get a contradiction, that $A_{1}(\bn_{l})$ and $A_{0}'$ are not disjoint, and let $g\in A_{1}(\bn_{l}) \cap A_{0}'$.   We now consider two cases.

\noindent \textbf{Case 1: $j=1$.} The induction hypothesis implies $A_{0}(\bar{\bn}_{l-1})$ and $A_{1}(\bar{\bn}_{l-1})$ are disjoint.   We have
\begin{align}
\label{eqnOver1}  |\{\tau \in \Omega_{n_{1}}: g|_{\tau} \in A_{1}(\bar{\bn}_{l-1})\}| &> \tfrac{1}{2}|\Omega_{n_{1}}| + m_{1},  \text{ since } g\in A_{1}(\bn_{l}),\\
\label{eqnOver2}  |\{\tau \in \Omega_{n_{1}}: g|_{\tau} \in A_{0}(\bar{\bn}_{l-1})\}| &> \tfrac{1}{2}|\Omega_{n_{1}}| + m_{1}-k_{1},  \text{ since } g\in A_{0}'.
\end{align}
Inequalities (\ref{eqnOver1}) and (\ref{eqnOver2}) together imply $|\Omega_{n_{1}}| > |\Omega_{n_{1}}| + 2 m_{1}-k_{1}$,  contradicting the assumption $k_{1}\leq m_{1}$.

\noindent \textbf{Case 2: $j>1$.} The induction hypothesis implies that the sets $A_{1}(\bar{\bn}_{l-1})$ and
\[
A_{0}'':= A_{0}((n_{2}-n_{1},m_{2}),\dots, (n_{j}-n_{1},m_{j}-k_{j}), \dots, (n_{l}-n_{1},m_{l}))
\]
are disjoint.  Then
\begin{align}
\label{eqnOver3}  |\{\tau \in \Omega_{n_{1}}: g|_{\tau} \in A_{1}(\bar{\bn}_{l-1})\}| &> \tfrac{1}{2}|\Omega_{n_{1}}| + m_{1},  && \text{since } g\in A_{1}(\bn_{l}),\\
\label{eqnOver4}  |\{\tau \in \Omega_{n_{1}}: g|_{\tau} \in A_{0}''\}| &> \tfrac{1}{2}|\Omega_{n_{1}}| + m_{1},  && \text{since } g\in A_{0}'.
\end{align}
Inequalities (\ref{eqnOver3}) and (\ref{eqnOver4}) together imply $|\Omega_{n_{1}}| > |\Omega_{n_{1}}| + 2 m_{1}$, a contradiction.  This completes the proof of Part (iv).

For Part (v) we again use induction on $l$.  The base case, $l=1$, is Part (v) of Lemma \ref{lemmaBase}, so we establish the induction step.  Assume $l>1$.  The induction hypothesis implies
\[
A_{i}(\bar{\bn}_{l-1}) \subseteq A_{i}((n_{2}-n_{1},m_{2}),\dots (n_{j}-n_{1},m_{j}'),\dots, (n_{l}-n_{1},m_{l})),
\]
and the definition of $A_{i}(\bn_{l})$ then implies the conclusion.  This completes the proof of Part (v) and the proof of the Lemma.  \end{proof}

%

\begin{lemma}\label{lemmaMonotone}
  For all $i\in \mathbb Z/2\mathbb Z$ and all $\bn_{l}=(n_{1},m_{1}),\dots,(n_{l},m_{l})$, $\bn_{l+1} = (n_{1},m_{1}),\dots,(n_{l+1},m_{l+1})$ where $n_{1}<\dots<n_{l+1}$, $n_{j}, m_{j}\in \mathbb N$, $A_{i}(\bn_{l}) \subseteq A_{i}(\bn_{l+1})$.
\end{lemma}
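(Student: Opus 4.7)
The plan is to induct on $l$, exploiting the recursive structure of Definition \ref{definitionStep}. The key point is that membership in $A_{i}(\bn_{l+1})$ is witnessed by restrictions $g|_{\tau}$ (for $\tau\in\Omega_{n_{1}}$) lying in $A_{i}(\bar{\bn}_{l})$, whereas membership in $A_{i}(\bn_{l})$ is witnessed by $g|_{\tau}\in A_{i}(\bar{\bn}_{l-1})$. Since $\bar{\bn}_{l}$ is simply $\bar{\bn}_{l-1}$ appended by the pair $(n_{l+1}-n_{1},m_{l+1})$, the induction hypothesis applied to these shorter sequences yields the crucial containment $A_{i}(\bar{\bn}_{l-1})\subseteq A_{i}(\bar{\bn}_{l})$, and the containment $A_{i}(\bn_{l})\subseteq A_{i}(\bn_{l+1})$ will follow immediately.

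For the base case $l=1$, I would verify directly that $A_{i}(n_{1},m_{1})\subseteq A_{i}((n_{1},m_{1}),(n_{2},m_{2}))$. Given $g\in A_{i}(n_{1},m_{1})\subseteq G_{2}^{(n_{1})}\subseteq G_{2}^{(n_{2})}$ and $\tau\in\Omega_{n_{1}}$, the restriction $g|_{\tau}$ is the constant function $g(\tau)\mathbf{1}\in G_{2}^{(n_{2}-n_{1})}$, since $g$ factors through $\pi_{n_{1}}$. Whenever $g(\tau)=i$, $g|_{\tau}=i\mathbf{1}$ has $|(i\mathbf{1})^{-1}(i)|_{n_{2}-n_{1}}=|\Omega_{n_{2}-n_{1}}|>\tfrac{1}{2}|\Omega_{n_{2}-n_{1}}|+m_{2}$ (in the nondegenerate regime $m_{2}<\tfrac{1}{2}|\Omega_{n_{2}-n_{1}}|$ that is implicit throughout, since otherwise the relevant sets $A_{i}$ are empty), so $g|_{\tau}\in A_{i}(n_{2}-n_{1},m_{2})$. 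The count of such $\tau$ equals $|g^{-1}(i)|_{n_{1}}>\tfrac{1}{2}|\Omega_{n_{1}}|+m_{1}$, hence $g\in A_{i}((n_{1},m_{1}),(n_{2},m_{2}))$.

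For the inductive step ($l\geq 2$), applying the induction hypothesis to $\bar{\bn}_{l-1}$ (a sequence of length $l-1$) extended by $(n_{l+1}-n_{1},m_{l+1})$ gives $A_{i}(\bar{\bn}_{l-1})\subseteq A_{i}(\bar{\bn}_{l})$. Then for $g\in A_{i}(\bn_{l})\subseteq G_{2}^{(n_{l})}\subseteq G_{2}^{(n_{l+1})}$, every $\tau\in\Omega_{n_{1}}$ with $g|_{\tau}\in A_{i}(\bar{\bn}_{l-1})$ automatically has $g|_{\tau}\in A_{i}(\bar{\bn}_{l})$; since the count of such $\tau$ exceeds $\tfrac{1}{2}|\Omega_{n_{1}}|+m_{1}$, we conclude $g\in A_{i}(\bn_{l+1})$.

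The only subtlety — and the only step that genuinely requires care — is confirming that $g|_{\tau}$ is unaffected when we regard $g\in G_{2}^{(n_{l})}$ as an element of $G_{2}^{(n_{l+1})}$ via the canonical inclusion, beyond passing from $G_{2}^{(n_{l}-n_{1})}$ to the larger ambient space $G_{2}^{(n_{l+1}-n_{1})}$. This is immediate from the formula $g|_{\tau}(\omega)=g(\tau\omega)$, which depends only on the first $n_{l}$ coordinates of $\tau\omega$. Beyond this bookkeeping, I do not anticipate any serious obstacle: the lemma is essentially a monotonicity statement built into the recursion of Definition \ref{definitionStep}.
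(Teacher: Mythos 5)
Your proof is correct and takes essentially the same route as the paper: induction on $l$, with the base case handled by observing that each restriction $g|_{\tau}$ of $g\in A_{i}(n_{1},m_{1})$ is the constant function $g(\tau)\mathbf{1}$, and the inductive step deduced from the containment $A_{i}(\bar{\bn}_{l-1})\subseteq A_{i}(\bar{\bn}_{l})$ furnished by the induction hypothesis. The nondegeneracy caveat you flag (needing $m_{2}<\tfrac{1}{2}|\Omega_{n_{2}-n_{1}}|$ so that $i\mathbf{1}\in A_{i}(n_{2}-n_{1},m_{2})$) is equally implicit in the paper's own base case, so you are on the same footing there.
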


\begin{proof}
We consider the case $i=1$.  The case $i=0$ follows by symmetry.

 We proceed by induction on $l$.  The base case is the containment $A_{1}(n_{1},m_{1}) \subseteq A_{1}((n_{1},m_{1}),(n_{2},m_{2}))$.  Let $g\in A_{1}(n_{1},m_{1})$.  We must show that $g|_{\tau} \in A_{1}(n_{2}-n_{1},m_{2})$ for $>m_{1}$ values of $\tau \in \Omega_{n_{1}}$.  In fact $g|_{\tau} = \mathbf{1} \in A_{1}(n_{2}-n_{1},m_{2})$ for  $>m_{1}$ values of $\tau\in \Omega_{n_{1}}$, so we are done with the base case.

Now assume $l>1$.  Let $g\in A_{1}(\bn_{l})$.  The induction hypothesis implies $A_{1}(\bar{\bn}_{l-1}) \subseteq A_{1}(\bar{\bn}_{l})$, where $\bar{\mathbf{n}}_{l} =(n_{2}-n_{1},m_{2}),\dots,(n_{l+1}-n_{1},m_{l+1})$.  We must show that
\begin{align}
\label{eqnMonotone}   g|_{\tau} \in A_{1}((n_{2}-n_{1},m_{2}),\dots, (n_{l+1}-n_{1},m_{l+1})) =: A_{1}(\bar{\bn}_{l})
 \end{align} for $> m_{1}$ values of $\tau\in \Omega_{n_{1}}$.  By the definition of $A_{1}(\bn_{l})$, we have $g|_{\tau} \in A_{1}(\bar{\bn}_{l-1})$ for  $> \frac{1}{2}|\Omega_{n_{1}}|+m_{1}$ values of $\tau \in \Omega_{n_{1}}$, so the induction hypothesis implies that the inclusion (\ref{eqnMonotone}) holds for the required number of $\tau\in \Omega_{n_{1}}$.
\end{proof}

\subsection{Constructing elements of \texorpdfstring{$A_{i}(\bn_{l})$}{Ainl}.}

In this subsection we estimate the cardinality of $|A_{i}(n,m)|$ and construct some elements of $A_{i}(\bn_{l})$, for the purpose of estimating $|A_{i}(\bn_{l})|$ in the next subsection.

\begin{lemma}\label{lemmaBaseEstimate}
  Let $m\in \mathbb N$.  Then $\lim_{n\to \infty} \frac{|A_{i}(n,m)|}{|G_{2}^{(n)}|} = \frac{1}{2}$ for all $i\in \mathbb Z/2\mathbb Z$.
\end{lemma}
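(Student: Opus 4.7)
The plan is to recast $|A_1(n,m)|/|G_2^{(n)}|$ as a binomial tail probability and show it tends to $1/2$ as $n\to\infty$. By Section \ref{secPresentation}, $G_2^{(n)}$ is identified with $\{0,1\}^{\Omega_n}$, where $|\Omega_n|=N:=2^n$. For a uniformly chosen $g\in G_2^{(n)}$, the quantity $|g^{-1}(1)|_n$ is a sum of $N$ independent fair coin flips; writing $X$ for this random variable,
\[
\frac{|A_1(n,m)|}{|G_2^{(n)}|}=P\!\left[X>\tfrac{N}{2}+m\right].
\]

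Next, I would exploit the symmetry of the binomial distribution about its mean. Since $P[X>N/2]=P[X<N/2]$, one has $P[X>N/2]=\tfrac{1}{2}\bigl(1-P[X=N/2]\bigr)$. Stirling's approximation yields $P[X=N/2]=\binom{N}{N/2}2^{-N}=O(N^{-1/2})$ when $N$ is even (and $0$ when $N$ is odd), while the gap $P[X>N/2]-P[X>N/2+m]$ is bounded by $m\cdot\max_k\binom{N}{k}2^{-N}=O(m/\sqrt{N})$. Since $m$ is fixed and $N=2^n\to\infty$, these two estimates together give $P[X>N/2+m]\to \tfrac12$, which handles the case $i=1$.

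For the case $i=0$, Part (i) of Lemma \ref{lemmaBase} gives $A_0(n,m)=A_1(n,m)+\mathbf{1}$, so $|A_0(n,m)|=|A_1(n,m)|$, and the conclusion follows. There is no real obstacle in this argument: it reduces to the classical fact that the symmetric binomial distribution on $\{0,1,\dots,N\}$ concentrates on scale $\sqrt{N}$ about its mean $N/2$, while the shift $m$ remains bounded as $N=2^n$ grows without bound.
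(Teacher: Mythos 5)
Your proof is correct and takes essentially the same route as the paper, just in probabilistic dress: your symmetry step plus the bound $m\cdot\max_k\binom{N}{k}2^{-N}$ on the band $N/2 < X \le N/2+m$ is precisely the paper's estimate $|Z(n,m)|\le (2m+1)\binom{|\Omega_n|}{\lfloor|\Omega_n|/2\rfloor}=o\bigl(2^{|\Omega_n|}\bigr)$ combined with $|A_0(n,m)|=|A_1(n,m)|$ and disjointness from Lemma \ref{lemmaBase}. The only cosmetic difference is that you invoke Stirling for the $O(N^{-1/2})$ rate, whereas the paper only needs the qualitative fact that the central binomial coefficient is $o(2^N)$.
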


\begin{remark}We can also write the conclusion of Lemma \ref{lemmaBaseEstimate} as
\begin{equation}\label{baseLittleO}
  |A_{i}(n,m)| = |G_{2}^{(n)}|\bigl(\tfrac{1}{2}+o(1)\bigr),
\end{equation}
where $o(1)$ is a quantity tending to $0$ as $n\to \infty$ and $m$ remains fixed.\end{remark}

\begin{proof}
  Let $Z(n,m):= G_{2}^{(n)} \setminus (A_{0}(n,m)\cup A_{1}(n,m))$. Lemma \ref{lemmaBase} implies $|A_{0}(n,m)|= |A_{1}(n,m)|$ and $A_{0}(n,m)\cap A_{1}(n,m)=\varnothing$, so it suffices to show that
  \begin{align}\label{eqnZlim}
    \lim_{n\to \infty} \frac{|Z(n,m)|}{|G_{2}^{(n)}|} = 0.
  \end{align}
Now $Z(n,m)=\{g\in G_{2}^{(n)}:  \frac{1}{2}|\Omega_{n}| - m \leq |g^{-1}(1)|_{n} \leq \frac{1}{2}|\Omega_{n}| + m \}$, so
\begin{align*}
  |Z(n,m)| &\leq (2m+1)\max_{k\in \{0,\dots, |\Omega_{n}|\}} \binom{|\Omega_{n}|}{k}\\
  &= (2m+1)\binom{|\Omega_{n}|}{\lfloor |\Omega_{n}|/2\rfloor}.
\end{align*}
Since $|G_{2}^{(n)}| = 2^{|\Omega_{n}|}$ and $\binom{|\Omega_{n}|}{\lfloor |\Omega_{n}|/2\rfloor} = o(2^{|\Omega_{n}|})$, we have established Equation (\ref{eqnZlim}).
\end{proof}

To estimate the cardinality of $A_{i}(\bn_{l})$, it helps to have a ``bottom up" inductive construction, somewhat dual to the ``top down" inductive definition given in \ref{definitionStep}.

\begin{definition}Given $g\in G_{2}^{(n_{l-1})}$ and $m_{l}\in \mathbb N$, let $G_{2}^{(n_{l})}[g,m_{l}]$ be the set of $h\in G_{2}^{(n_{l})}$ satisfying
\begin{equation}\label{equationAdjoin}
  h|_{\tau}\in A_{g(\tau)}(n_{l}-n_{l-1},m_{l}) \text{ for all } \tau \in \Omega_{n_{l-1}},
\end{equation}
where we abuse notation and use $g(\tau)$ to denote $i\in \mathbb Z/2\mathbb Z$ if $g([\tau])=\{i\}$.
\end{definition}

\begin{lemma}\label{lemmaBottom}  Let $l\in \mathbb N$, $l>1$.  Let $n_{1}<\dots< n_{l}\in \mathbb N$, $m_{j}\in \mathbb N$,
$i\in \mathbb Z/2\mathbb Z$ and $g\neq g' \in A_{i}((n_{1},m_{1}),\dots, (n_{l-1},m_{l-1}))$.  Then
  \begin{enumerate}
    \item[(i)] $G_{2}^{(n_{l})}[g,m_{l}] \subseteq A_{i}((n_{1},m_{1}),\dots,(n_{l},m_{l}))$,

\smallskip

    \item[(ii)]  $G_{2}^{(n_{l})}[g,m_{l}] \cap G_{2}^{(n_{l})}[g',m_{l}] = \varnothing$.

\smallskip

    \item[(iii)] If $g\in G_{2}^{(n_{l-1})}$ and $m_{l}\in \mathbb N$ are fixed, then
\begin{equation}\label{equationEndSize}
          |G_{2}^{(n_{l})}[g,m_{l}]| = |G_{2}^{(n_{l})}|\Bigl(\frac{1}{|G_{2}^{(n_{l-1})}|} + o(1)\Bigr),
\end{equation}
where $o(1)$ is a quantity tending to $0$ as $n_{l}\to \infty$.
     \end{enumerate}
\end{lemma}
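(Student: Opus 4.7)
The plan is to dispatch parts (ii) and (iii) directly from the base-case lemmas about $A_{i}(n,m)$, then prove part (i) by induction on $l$ that mirrors the recursion in Definition \ref{definitionStep}. For part (ii), if $g \neq g'$ in $G_{2}^{(n_{l-1})}$ then there is some $\tau \in \Omega_{n_{l-1}}$ with $g(\tau) \neq g'(\tau)$; any common element $h \in G_{2}^{(n_{l})}[g,m_{l}] \cap G_{2}^{(n_{l})}[g',m_{l}]$ would force $h|_{\tau}$ to lie simultaneously in $A_{0}(n_{l}-n_{l-1},m_{l})$ and $A_{1}(n_{l}-n_{l-1},m_{l})$, contradicting Lemma \ref{lemmaBase}(iv) (with $k=0$). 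For part (iii), an element of $G_{2}^{(n_{l})}[g,m_{l}]$ is specified by independent choices of $h|_{\tau} \in A_{g(\tau)}(n_{l}-n_{l-1},m_{l})$ over $\tau \in \Omega_{n_{l-1}}$, so
\[
|G_{2}^{(n_{l})}[g,m_{l}]| = \prod_{\tau \in \Omega_{n_{l-1}}} |A_{g(\tau)}(n_{l}-n_{l-1},m_{l})|.
\]
Lemma \ref{lemmaBaseEstimate} makes each factor $|G_{2}^{(n_{l}-n_{l-1})}|(\tfrac{1}{2} + o(1))$ as $n_{l} \to \infty$; since $|\Omega_{n_{l-1}}|$ is fixed, expanding the product gives $|G_{2}^{(n_{l}-n_{l-1})}|^{|\Omega_{n_{l-1}}|}(2^{-|\Omega_{n_{l-1}}|} + o(1))$, which using $|G_{2}^{(n)}| = 2^{2^{n}}$ collapses to $|G_{2}^{(n_{l})}|(1/|G_{2}^{(n_{l-1})}| + o(1))$, as required.

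For part (i), I induct on $l$. When $l = 2$, $g \in A_{i}(n_{1},m_{1})$ means $g(\tau) = i$ for $> \tfrac{1}{2}|\Omega_{n_{1}}| + m_{1}$ values of $\tau \in \Omega_{n_{1}}$, and for each such $\tau$ any $h \in G_{2}^{(n_{2})}[g,m_{2}]$ has $h|_{\tau} \in A_{i}(n_{2}-n_{1},m_{2})$, so $h \in A_{i}((n_{1},m_{1}),(n_{2},m_{2}))$ by Definition \ref{definitionStep}. For $l > 2$, the crucial calculation is the concatenation identity: for $\tau \in \Omega_{n_{1}}$ and $\sigma \in \Omega_{n_{l-1}-n_{1}}$, $\tau\sigma \in \Omega_{n_{l-1}}$ and
\[
(h|_{\tau})|_{\sigma} = h|_{\tau\sigma} \in A_{g(\tau\sigma)}(n_{l}-n_{l-1},m_{l}) = A_{(g|_{\tau})(\sigma)}(n_{l}-n_{l-1},m_{l}).
\]
This means $h|_{\tau} \in G_{2}^{(n_{l}-n_{1})}[g|_{\tau},m_{l}]$ relative to the shifted length-$(l-1)$ sequence $\bar{\bn}_{l-1}$. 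Meanwhile, $g \in A_{i}(\bn_{l-1})$ unpacks via Definition \ref{definitionStep} to $g|_{\tau} \in A_{i}((n_{2}-n_{1},m_{2}),\dots,(n_{l-1}-n_{1},m_{l-1}))$ for $> \tfrac{1}{2}|\Omega_{n_{1}}| + m_{1}$ values of $\tau$. For each such $\tau$, the inductive hypothesis applied to the sequence $\bar{\bn}_{l-1}$ yields $h|_{\tau} \in A_{i}(\bar{\bn}_{l-1})$, and Definition \ref{definitionStep} then concludes $h \in A_{i}(\bn_{l})$.

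The only real obstacle is notational: aligning the restrictions $g|_{\tau}$ and $h|_{\tau}$ with the shifted sequence $\bar{\bn}_{l-1}$ so that the inductive hypothesis applies verbatim. Everything substantive is already packaged in Lemmas \ref{lemmaBase} and \ref{lemmaBaseEstimate}; the rest amounts to exploiting the binary tree structure of $\Omega$ transmitted through the hierarchy $n_{1} < n_{2} < \cdots < n_{l}$ via concatenation of strings.
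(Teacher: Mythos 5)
Your proof is correct and follows essentially the same route as the paper: parts (ii) and (iii) via the disjointness $A_{0}(n,m)\cap A_{1}(n,m)=\varnothing$ and the product formula with Lemma \ref{lemmaBaseEstimate}, and part (i) by induction on $l$, restricting to the cylinders of scale $n_{1}$ and applying the inductive hypothesis to the shifted sequence $\bar{\bn}_{l-1}$. The concatenation identity you spell out is exactly the observation $h|_{\psi}\in G_{2}^{(n_{l}-n_{1})}[g|_{\psi},m_{l}]$ used (implicitly) in the paper's induction step.
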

\begin{proof}
  We prove Part (i) by induction on $l$.  For $l=1$ there is nothing to prove, so the base case is $l=2$.    We set $i=1$, as the case $i=0$ follows by symmetry. Fix $g\in A_{1}(n_{1},m_{1})$.  We must prove that $G_{2}^{(n_{2})}[g,m_{2}] \subseteq A_{1}((n_{1},m_{1}),(n_{2},m_{2}))$.  Let $h\in G_{2}^{(n_{2})}[g, m_{2}]$, so that for $\tau \in \Omega_{n_{1}} $, $h|_{\tau} \in A_{1}(n_{2}-n_{1},m_{2})$ whenever $g|_{\tau} = \mathbf 1$, which occurs for at least $\frac{1}{2}|\Omega_{n_{1}}| + m_{1}$ values of $\tau \in \Omega_{n_{1}}$.  Then $h\in A_{1}((n_{1},m_{1}),(n_{2},m_{2}))$, by definition.

  Now fix $l\in \mathbb N$, $l>2$.  The induction hypothesis implies that if $n_{1}'<\dots < n_{l-1}'$, $m_{j}'\in \mathbb N$, and $g\in A_{1}((n_{1}',m_{1}'),\dots,(n_{l-2}',m_{l-2}'))$, then
  \[G_{2}^{(n_{l-1}')}[g,m_{l-1}']\in A_{1}((n_{1}',m_{1}'),\dots,(n_{l-1}',m_{l-1}')).\]
Let $g\in A_{1}((n_{1},m_{1}),\dots, (n_{l-1},m_{l-1}))$, and let $h\in G_{2}^{(n_{l})}[g,m_{l}]$, so that $h|_{\tau} \in A_{1}((n_{l}-n_{l-1},m_{l}))$ whenever $\tau\in \Omega_{n_{l-1}}$ and $g|_{\tau} = \mathbf 1$.  For all $\psi \in \Omega_{n_{1}}$, we then have $h|_{\psi} \in G_{2}^{(n_{l}-n_{1})}[g|_{\psi},m_{l}]$.  By the induction hypothesis, the inclusion
\begin{equation}\label{equationInclusion}
   g|_{\psi}\in A_{1}((n_{2}-n_{1},m_{2}),\dots, (n_{l-1}-n_{1},m_{l-1}))
 \end{equation}implies $h|_{\psi} \in A_{1}((n_{2}-n_{1},m_{2}),\dots, (n_{l}-n_{1},m_{l}))$.  Since the inclusion (\ref{equationInclusion}) occurs for at least $\frac{1}{2}|\Omega_{n_{1}}|+m_{1}$ values of $\psi\in \Omega_{n_{1}}$, we then have $h\in A_{1}((n_{1},m_{1}),\dots, (n_{l},m_{l}))$.  This completes the induction and the proof of Part (i).

To prove Part (ii) we use Part (iv) of Lemma \ref{lemmaBase}.  Let $h\in G_{2}^{(n_{l})}[g,m_{l}]$, $h'\in G_{2}^{(n_{l})}[g',m_{l}]$.  Since $g\neq g'$, there exists $\tau \in \Omega_{n_{l-1}}$ such that $g|_{\tau} \neq g'|_{\tau}$.  Then for some $i\in \mathbb Z/2\mathbb Z$, $h|_{\tau} \in A_{i}(n_{l}-n_{l-1},m_{l})$ while $h'|_{\tau} \in A_{i+1}(n_{l}-n_{l-1},m_{l})$. Then $h|_{\tau} \neq h'|_{\tau}$ by disjointness of $A_{i}(n_{l}-n_{l-1},m_{l})$, $i=0,1$ (Part (iv) of Lemma \ref{lemmaBase}).  It follows that $h\neq h'$, as desired.

For Part (iii), we identify $G_{2}^{(n_{l})}[g,m_{l}]$ with the set of functions $h:\Omega_{n_{l-1}} \to G_{2}^{(n_{l}-n_{l-1})}$ satisfying $h(\tau)\in A_{g(\tau)}(n_{l}-n_{l-1},m_{l})$ for $\tau \in \Omega_{n_{l-1}}$.  There are
\[
\prod_{\tau \in \Omega_{n_{l-1}}} |A_{g(\tau)}(n_{l}-n_{l-1},m_{l})|
\]
such functions, so the estimate (\ref{baseLittleO}) implies

\begin{align*}
  |G_{2}^{(n_{l})}[g,m_{l}]| &= |G_{2}^{(n_{l}-n_{l-1})}|^{|\Omega_{n_{l-1}}|}\bigl(\tfrac{1}{2}+o(1)\bigr)^{|\Omega_{n_{l-1}}|}\\
  &= |G_{2}^{(n_{l})}|\Bigl(\frac{1}{2^{|\Omega_{n_{l-1}}|}} + o(1)\Bigr)\\
  &= |G_{2}^{(n_{l})}|\Bigl(\frac{1}{|G_{2}^{(n_{l-1})}|} + o(1)\Bigr),
\end{align*}
as desired.
\end{proof}

\subsection{Estimating \texorpdfstring{$|A_{i}(\bn_{l})|$}{|A(nl)|}.}

\begin{lemma}\label{lemmaEstimate}
  Fix $n_{1}<  \dots < n_{l-1}\in \mathbb N$ and $m_{1}, \dots, m_{l}\in \mathbb N$.  Then for all $i\in \mathbb Z/2\mathbb Z$
  \begin{align*}
    \liminf_{n_{l}\to \infty} \frac{|A_{i}((n_{1},m_{1}),\dots,(n_{l},m_{l}))|}{|G_{2}^{(n_{l})}|} \geq \frac{|A_{i}((n_{1},m_{1}),\dots,(n_{l-1},m_{l-1}))|}{|G_{2}^{(n_{l-1})}|}.
  \end{align*}
\end{lemma}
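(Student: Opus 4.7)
The plan is to exploit the ``bottom up'' construction provided by Lemma \ref{lemmaBottom}. For fixed $n_1 < \dots < n_{l-1}$ and $m_1,\dots,m_l$, and for each $g \in A_i(\bn_{l-1})$, Part (i) of Lemma \ref{lemmaBottom} gives $G_2^{(n_l)}[g,m_l] \subseteq A_i(\bn_l)$, while Part (ii) says these subsets are pairwise disjoint as $g$ varies over $A_i(\bn_{l-1})$. This yields the lower bound
\[
|A_i(\bn_l)| \;\geq\; \sum_{g \in A_i(\bn_{l-1})} |G_2^{(n_l)}[g,m_l]|.
\]

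Next I invoke Part (iii) of Lemma \ref{lemmaBottom}, which gives
\[
|G_2^{(n_l)}[g,m_l]| \;=\; |G_2^{(n_l)}|\Bigl(\tfrac{1}{|G_2^{(n_{l-1})}|} + o(1)\Bigr)
\]
as $n_l \to \infty$, with the other parameters (including $g$) held fixed. Since $A_i(\bn_{l-1})$ is a finite subset of $G_2^{(n_{l-1})}$, the number of terms in the sum is a fixed constant (independent of $n_l$), so I can factor out a common $o(1)$ valid uniformly across the sum. Dividing through by $|G_2^{(n_l)}|$ gives
\[
\frac{|A_i(\bn_l)|}{|G_2^{(n_l)}|} \;\geq\; |A_i(\bn_{l-1})|\Bigl(\tfrac{1}{|G_2^{(n_{l-1})}|} + o(1)\Bigr) \;=\; \frac{|A_i(\bn_{l-1})|}{|G_2^{(n_{l-1})}|} + |A_i(\bn_{l-1})|\cdot o(1).
\]

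Taking $\liminf_{n_l \to \infty}$ on both sides, the error term $|A_i(\bn_{l-1})|\cdot o(1)$ vanishes because the coefficient $|A_i(\bn_{l-1})|$ does not depend on $n_l$, yielding the claimed bound. There is no real obstacle here: the lemma is essentially a packaging of Lemma \ref{lemmaBottom}. The only point requiring any care is confirming that the $o(1)$ in (\ref{equationEndSize}) can be made uniform over the finitely many $g \in A_i(\bn_{l-1})$, but since $A_i(\bn_{l-1})$ is a finite set this is automatic by taking the maximum of the finitely many error terms.
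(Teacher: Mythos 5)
Your proposal is correct and follows essentially the same route as the paper: both decompose $A_i(\bn_l)$ via the pairwise disjoint sets $G_2^{(n_l)}[g,m_l]$ for $g\in A_i(\bn_{l-1})$ (Parts (i) and (ii) of Lemma \ref{lemmaBottom}) and then apply the cardinality estimate (\ref{equationEndSize}), with uniformity over the finitely many $g$ handled in the paper by taking a minimum and in your write-up by taking a maximum of the error terms. The two arguments are the same in substance.
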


\begin{proof}  In this proof, $o(1)$ denotes a quantity which tends to $0$ as $n_{l}\to \infty$. We estimate the cardinality of $A_{1}(\bn_{l}):=A_{1}((n_{1},m_{1}),\dots,(n_{l},m_{l}))$, since Lemma \ref{lemmaProperties} implies $|A_{1}(\bn_{l})|=|A_{0}(\bn_{l})|$.

  For $g\in G_{2}^{(n_{l-1})}$, consider $B(g):=G_{2}^{(n_{l})}[g,m_{l}]$.
Lemma \ref{lemmaBottom} says that the $B(g)$ are mutually disjoint and $\bigcup_{g\in A_{1}(\bn_{l-1})} B(g) \subseteq A_{1}(\bn_{l})$, where $A_{1}(\bn_{l-1}):=A_{1}((n_{1},m_{1}),\dots, (n_{l-1},m_{l-1}))$. Part (iii) of Lemma \ref{lemmaBottom} then implies
\begin{align*}
  |A_{1}(\bn_{l})| &\geq |A_{1}(\bn_{l-1})| \min_{g\in G_{2}^{(n_{l-1})}} |B(g)|\\
  &= |A_{1}(\bn_{l-1})| |G_{2}^{(n_{l})}|\Bigl(\frac{1}{|G_{2}^{(n_{l-1})}|}+ o(1)\Bigr) && \text{by (\ref{equationEndSize})}\\
  &= |G_{2}^{(n_{l})}| \Bigl(\frac{|A_{1}(\bn_{l-1})| }{|G_{2}^{(n_{l-1})}|}+o(1)\Bigr),
\end{align*}
which implies the conclusion of the lemma. \end{proof}

\subsection{Proofs of Theorems.}
In this section we prove Theorems \ref{theoremMainDynamic}, \ref{theoremMainDensity}, \ref{theoremPreciseDynamic}, and \ref{theoremPWsynd}.  The set $S$ we construct for these theorems will be a union of some of the $V(n,k)$ defined in Section \ref{sectionHammingBalls}.

\begin{lemma}\label{lemmaPush}
  Let $(n_{j})_{j\in \mathbb N}$ be an increasing sequence of natural numbers, and let $(m_{j})_{j\in \mathbb N}$, $(k_{j})_{j\in \mathbb N}$ be sequences of natural numbers.  Let $S=\bigcup_{j=1}^{\infty} V(n_{j},k_{j})$, $i\in \mathbb Z/2\mathbb Z$, and $A= \bigcup_{l=1}^{\infty} A_{i}((n_{1},m_{1}),\dots,(n_{l},m_{l}))$.  Then
  \[S+A \subseteq \bigcup_{l=1}^{\infty} A_{i+1}((n_{1},m_{1}-k_{1}),\dots, (n_{l},m_{l}-k_{l})).\]
\end{lemma}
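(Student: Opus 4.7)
The plan is to take arbitrary $s \in S$ and $a \in A$ and chase the definitions. Choose $j$ with $s \in V(n_j, k_j)$; since $V(n_j, k_j) = U(n_j, k_j) + \mathbf{1}$, write $s = u + \mathbf{1}$ with $u \in U(n_j, k_j)$. Choose $l$ with $a \in A_i(\mathbf{n}_l)$ where $\mathbf{n}_l = (n_1, m_1), \dots, (n_l, m_l)$. After possibly replacing $l$ by $\max(j, l)$ and invoking Lemma \ref{lemmaMonotone} to conclude that $a \in A_i(\mathbf{n}_{\max(j,l)})$ as well, I may assume $j \leq l$.

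Now I would apply Part (iii) of Lemma \ref{lemmaProperties} to the element $u \in U(n_j, k_j)$, obtaining
\[
s + a = a + u + \mathbf{1} \in A_{i+1}\bigl((n_1, m_1), \dots, (n_j, m_j - k_j), \dots, (n_l, m_l)\bigr),
\]
that is, only the $j$-th size parameter has been decreased. To match the form of the right-hand side of the claimed inclusion, I would then apply Part (v) of Lemma \ref{lemmaProperties} iteratively at each coordinate $\kappa \neq j$, replacing $m_\kappa$ by $m_\kappa - k_\kappa \leq m_\kappa$ one index at a time, to conclude
\[
A_{i+1}\bigl((n_1, m_1), \dots, (n_j, m_j - k_j), \dots, (n_l, m_l)\bigr) \subseteq A_{i+1}\bigl((n_1, m_1 - k_1), \dots, (n_l, m_l - k_l)\bigr).
\]
Combining the two inclusions places $s + a$ in the union on the right, as required.

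There is no real obstacle in this argument; it is essentially a matter of bookkeeping. The only point requiring any care is the case $j > l$, where one needs to inflate $a$ from a depth-$l$ niveau set to a depth-$j$ one via Lemma \ref{lemmaMonotone} before Part (iii) of Lemma \ref{lemmaProperties} is applicable. Implicit throughout is the natural assumption $k_\kappa < m_\kappa$ for the indices $\kappa$ involved, so that the parameters $m_\kappa - k_\kappa$ are natural numbers and the notation $A_{i+1}(\dots, (n_\kappa, m_\kappa - k_\kappa), \dots)$ is defined; this is the hypothesis under which Parts (iii) and (v) of Lemma \ref{lemmaProperties} are stated.
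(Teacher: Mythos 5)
Your argument is correct and is essentially the paper's own proof: reduce to a single pair $V(n_j,k_j)+A_i(\mathbf n_l)$, handle $j>l$ by inflating via Lemma \ref{lemmaMonotone}, and then combine Parts (iii) and (v) of Lemma \ref{lemmaProperties}. Your remark about the implicit hypothesis $k_\kappa < m_\kappa$ matches how the lemma is actually applied in the paper (there $m_j = 3k_j$), so nothing is missing.
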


\begin{proof}
As usual $\bn_{l}$ abbreviates the expression $(n_{1},m_{1}),\dots,(n_{l},m_{l})$.  Since $S+A = \bigcup_{l, j = 1}^{\infty} V(n_{j},k_{j}) + A_{i}(\bn_{l})$, it suffices to show that for each $j, l$,
  \begin{align}\label{eqnPart}
  V(n_{j},k_{j}) + A_{i}(\bn_{l}) \subseteq A_{i+1}((n_{1},m_{1}-k_{1}),\dots, (n_{r},m_{r}-k_{r})),
  \end{align} where $r=\max(l,j)$.  When $j\leq l$, the containment follows from Parts (iii) and (v) of Lemma \ref{lemmaProperties}.  When $j> l$, Lemma \ref{lemmaMonotone} implies $A_{i}(\bn_{l}) \subseteq A_{i}(\bn_{j})$, so that $V(n_{j},k_{j})+A_{i}(\bn_{l}) \subseteq V(n_{j},k_{j})+A_{i}(\bn_{j})$, and again Parts (iii) and (v) of Lemma \ref{lemmaProperties} imply the containment (\ref{eqnPart}). \end{proof}

\begin{proof}[Proof of Theorems \ref{theoremMainDensity} and \ref{theoremPWsynd}]
  Let $\varepsilon>0$ and let $(k_{j})_{j\in \mathbb N}$ be an increasing sequence of natural numbers.  Let $m_{j}= 3k_{j}$ for each $j$.  By Lemmas \ref{lemmaBaseEstimate} and \ref{lemmaEstimate}, we may choose an increasing sequence $(n_{j})_{j\in \mathbb N}$ of natural numbers such that
  \begin{align}\label{eqnUBD}
    \liminf_{l\to \infty} \frac{|A_{1}((n_{1},m_{1}),\dots,(n_{l},m_{l}))|}{|G_{2}^{(n_{l})}|} \geq \frac{1}{2}-\varepsilon.
  \end{align}
Let $A = \bigcup_{l=1}^{\infty} A_{1}(\bn_{l})$ and let $S=\bigcup_{l=1}^{\infty} V(n_{l},k_{l})$.  Inequality (\ref{eqnUBD}) implies $d^{*}(A)\geq \frac{1}{2}-\varepsilon$ (see Section \ref{sectionUBD}), and Lemma \ref{lemmaChromatic} implies every translate of $S$ is a set of chromatic recurrence.  By Lemma \ref{lemmaPWrecurrence}, we then have that $(C-C)\cap S \neq \varnothing$ whenever $C$ is piecewise syndetic.

We will show that
\begin{align}
 \label{equationDiff1} (A-A)\cap S &= \varnothing,\\
 \label{equationDiff2} [(A+S)-(A+S)]\cap S &=\varnothing.
\end{align}
Equation (\ref{equationDiff1}) will complete the proof of Theorem \ref{theoremMainDensity}, as Lemma \ref{lemmaPWrecurrence} implies that the existence of $g\in G$ and a piecewise syndetic $B\subseteq G$ such that $g+(B-B)\subseteq A-A$ would contradict Equation (\ref{equationDiff1}) and the fact that every translate of $S$ is a set of chromatic recurrence.

 Equation (\ref{equationDiff2}) implies that $A+S$ is not piecewise syndetic, since $S$ has nonempty intersection with $C-C$ whenever $C$ is piecewise syndetic.

Note that Equation (\ref{equationDiff1}) implies $A\cap (A+\mathbf{1})=\varnothing$, as $\mathbf{1}\in S$. Setting $E:=A\cup (A+\mathbf{1})$, Lemma \ref{lemAdditivity} implies $d^{*}(E)> 1-2\varepsilon$, since $A\cap (A+\mathbf{1})=\varnothing$ and $d^{*}(A)>\frac{1}{2}-\varepsilon$.  Furthermore $E+S = (A+S) \cup (A+S+\mathbf{1})$.  Since $A+S$ is not piecewise syndetic, $E+S$ is also not piecewise syndetic, by the translation invariance and partition regularity of piecewise syndeticity (Lemma \ref{lemmaPWPartitionRegular}).  Thus to prove Theorem \ref{theoremPWsynd} it suffices to establish Equation (\ref{equationDiff2}).

To prove Equation (\ref{equationDiff1}), observe that Lemma \ref{lemmaPush} implies
\[
A+S \subseteq A':= \bigcup_{l=1}^{\infty} A_{0}((n_{1}, m_{1}-k_{1}),\dots, (n_{l},m_{l}-k_{l})).
\]  Using the fact that $m_{j} = 3k_{j}$ for each $j$, we find that $A' \cap A=\varnothing$, by Part (iv) of Lemma \ref{lemmaProperties} and Lemma \ref{lemmaMonotone}. We then have $(A+S)\cap A = \varnothing$, so $(A-A)\cap S = \varnothing$, completing the proof of Equation (\ref{equationDiff1}).  The same argument also shows that $(A'-A')\cap S =\varnothing$, establishing Equation (\ref{equationDiff2}).
\end{proof}

\begin{proof}[Proof of Theorems \ref{theoremMainDynamic} and \ref{theoremPreciseDynamic}.]
 In the proof of Theorem \ref{theoremMainDensity} we constructed $S, A\subseteq G_{2}$ such that $d^{*}(A)>\frac{1}{2}-\varepsilon$, every translate of $S$ is a set of chromatic recurrence (and therefore a set of topological recurrence), and $(A-A)\cap S=\varnothing$.  Theorem \ref{theoremPreciseDynamic} follows from the existence of these sets and Lemma \ref{lemmaMeasureCorrespondence}.    Theorem \ref{theoremMainDynamic} follows immediately from Theorem \ref{theoremPreciseDynamic}.
\end{proof}

\section{Appendix}\label{sectionAppendix}

This section contains material needed for the exposition in Section \ref{sectionQuestions}.

\subsection{Equivalent forms}

For the reader's convenience, we provide well known equivalent formulations of the properties $(R_{j})$ and $(R_{j}^{\bullet})$, for $j=4,5, 6$, defined in Sections \ref{sectionDefinitions} and \ref{sectionTranslations}.

We need the following definition to state the next lemma.

\begin{definition}\label{defSpecial}
  Let $G$ be a countable abelian group.  We say that $S\subseteq G$ is a \emph{set of measurable recurrence for group rotation $G$-systems} if for every such $G$-system $(Z,m,R_{\rho})$ and every $D\subseteq Z$ having $m(D)>0$, there exists $g\in G$ such that $m(D\cap R_{\rho}^{g}D)>0$.
\end{definition}

In the following lemma $\overline{E}$ denotes the topological closure of a subset $E$ of a compact abelian group.

\begin{lemma}\label{lemmaBohr}
  Let $G$ be a countable abelian group and $S\subseteq G$.  The following are equivalent.
  \begin{enumerate}
    \item[(i)]  $S$ satisfies $(R_{6})$.
    \item[(ii)]  If $W\subseteq G$ is a Bohr neighborhood of $0$, then $S\cap W\neq \varnothing$.
    \item[(iii)]  For every homomorphism $\rho: G\to Z$, where $Z$ is a compact abelian group, $\overline{\rho(S)}$ contains $0\in Z$.
    \item[(iv)]  $S$ is a set of measurable recurrence for group rotation $G$-systems.
     \end{enumerate}
\end{lemma}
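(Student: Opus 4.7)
My plan is to treat (iii) as the central condition and prove the ring of equivalences (i)$\Leftrightarrow$(ii)$\Leftrightarrow$(iii)$\Leftrightarrow$(iv). The first three are topological reformulations of one another, while (iii)$\Leftrightarrow$(iv) requires a continuity-of-correlation argument together with Haar measure on $\overline{\rho(G)}$.

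\textbf{The equivalence (ii)$\Leftrightarrow$(iii).} By definition of the Bohr topology, a neighborhood base of $0$ in $G$ is given by preimages $\rho^{-1}(V)$, where $\rho\colon G\to Z$ is a continuous homomorphism to a compact abelian group and $V$ is an open neighborhood of $0\in Z$. So $S$ meets every Bohr neighborhood of $0$ if and only if for every such $\rho$ and $V$, $\rho(S)\cap V\neq\varnothing$, which is exactly saying $0\in\overline{\rho(S)}$.

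\textbf{The equivalence (i)$\Leftrightarrow$(iii).} For (iii)$\Rightarrow$(i), let $(Z,R_{\rho})$ be a minimal group rotation system and $U\subseteq Z$ open and nonempty. Fix $z_{0}\in U$; then $U-z_{0}$ is an open neighborhood of $0$ in $Z$, and $U\cap R_{\rho}^{g}U\neq\varnothing$ iff $\rho(g)\in U-U$, which is a neighborhood of $0$. Condition (iii) produces $g\in S$ with $\rho(g)\in U-U$. Conversely, for (i)$\Rightarrow$(iii), given a homomorphism $\rho\colon G\to Z$, let $Z':=\overline{\rho(G)}$; then $(Z',R_{\rho})$ is a minimal group rotation $G$-system. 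Given any open neighborhood $V$ of $0$ in $Z'$, choose an open $U\subseteq Z'$ with $U-U\subseteq V$ (possible by continuity of subtraction). Applying (i) gives $g\in S$ with $\rho(g)\in U-U\subseteq V$, so $0\in\overline{\rho(S)}$ in $Z'$ and hence in $Z$.

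\textbf{The equivalence (iii)$\Leftrightarrow$(iv).} For (iii)$\Rightarrow$(iv), fix a group rotation system $(Z,m,R_{\rho})$ and a measurable $D\subseteq Z$ with $m(D)>0$. The function $f\colon Z\to[0,1]$ defined by $f(z):=m(D\cap (D+z))$ can be written as the convolution $f(z)=\int \mathbf 1_{D}(w)\mathbf 1_{D}(w-z)\,dm(w)$, which is continuous by the standard fact that translation is continuous in $L^{2}(Z,m)$. Since $f(0)=m(D)>0$, there is an open neighborhood $V$ of $0$ on which $f>0$. By (iii), some $g\in S$ has $\rho(g)\in V$, so $m(D\cap R_{\rho}^{g}D)=f(\rho(g))>0$. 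For (iv)$\Rightarrow$(iii), given $\rho\colon G\to Z$, pass to $Z':=\overline{\rho(G)}$ with normalized Haar measure $m$. For any open neighborhood $V$ of $0$ in $Z'$, choose a nonempty open $D\subseteq Z'$ with $D-D\subseteq V$; then $m(D)>0$, so by (iv) there is $g\in S$ with $m(D\cap R_{\rho}^{g}D)>0$, forcing $\rho(g)\in D-D\subseteq V$.

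The main obstacle is the continuity of $f(z)=m(D\cap(D+z))$ in (iii)$\Rightarrow$(iv); once that standard fact is invoked, everything else is a matter of choosing symmetric neighborhoods of $0$ and translating between ``$z\in U-U$'' and ``$R_{\rho}^{g}U\cap U\neq\varnothing$''. The only subtle point is remembering to replace $Z$ with $\overline{\rho(G)}$ in (iv)$\Rightarrow$(iii) so that the Haar measure of a nonempty open set is positive.
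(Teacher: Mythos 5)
Your proof is correct and follows essentially the same route as the paper: the same key ingredients appear in both (single homomorphisms to compact groups as a Bohr neighborhood base, the translation $U\cap R_{\rho}^{g}U\neq\varnothing \iff \rho(g)\in U-U$, continuity of $z\mapsto m(D\cap(D+z))$, and replacing $Z$ by $\overline{\rho(G)}$ to get minimality/positivity of Haar measure on open sets). The only difference is organizational — you close the loop with (iv)$\Leftrightarrow$(iii) directly, whereas the paper proves (iv)$\implies$(i) and (iii)$\implies$(iv) — which changes nothing of substance.
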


\begin{proof}
We prove (iii) $\implies$ (ii) $\implies$ (i) $\implies$ (iii), then (iv) $\implies$ (i) and (iii) $\implies$ (iv).

  (iii) $\implies$ (ii).  Suppose $S$ satisfies condition (iii), and let $W\subseteq G$ be a Bohr neighborhood of $0$, so that  there are homomorphisms $\rho_{1},\dots,\rho_{k}: G\to \mathbb T$ and a neighborhood $V$ of $0\in \mathbb T$ such that $W$ contains $\bigcap_{i=1}^{k} \rho_{i}^{-1}(V)$. Let $Z$ be the group $\mathbb T^{k}$ and $\rho: G\to Z$ be the homomorphism given by $\rho(g)= (\rho_{1}(g),\dots,\rho_{k}(g))$.    Let $U=V\times~\cdots~\times~V\subseteq Z$, so that $U$ is a neighborhood of $0\in Z$.  Since $S$ satisfies condition (iii), there is a $g\in S$ such that $\rho(g)\in U$.  Then $\rho_{i}(g)\in V$ for each $i$, and we conclude that $g\in W$.

  (ii) $\implies$ (i).  Suppose $S\cap W\neq \varnothing$ for every Bohr neighborhood of $0\in G$.  Let $(Z,R_{\rho})$ be a minimal group rotation, and let $U\subseteq Z$ be a nonempty open set.  Choose a neighborhood $V$ of $0\in Z$ such that $U\cap (U+v)\neq \varnothing$ for every $v\in V$, and let $W=\rho^{-1}(V)$.  Then $W$ is a Bohr neighborhood\footnote{Here we are using the fact that every homomorphism from $G$ to a compact group is continuous in the Bohr topology.} of $0$, so there exists $g\in S\cap W$, meaning $\rho(g) \in V$.  We then have $U\cap (U+\rho(g)) \neq \varnothing$, meaning $U\cap (R_{\rho}^{g}U)\neq \varnothing$.  This shows that $S$ satisfies $(R_{6})$.

  (i) $\implies$ (iii).  Let $\rho: G\to Z$ be a homomorphism to a compact abelian group $Z$ and assume $S\subseteq G$ satisfies $(R_{6})$.    Assume, without loss of generality, that $\overline{\rho(G)} = Z$.  Let $U\subseteq Z$ be a neighborhood of $0\in Z$, and let $V\subseteq U$ be such that $V-V\subseteq U$.  Since $S$ satisfies $R_{6}$, there is a $g\in S$ such that $V\cap (V+\rho(g)) \neq \varnothing$, meaning $\rho(g)\in V-V$, so $\rho(g)\in U$.  Since $U$ is an arbitrary neighborhood of $0\in Z$, we have shown that $0\in \overline{\rho(S)}$.

(iv) $\implies$ (i).  Suppose $S$ satisfies condition (iv) and that $(Z,R_{\rho})$ is a minimal group rotation.  Let $U\subseteq Z$ be a nonempty open set, so that $m(U)>0$, where $m$ is the Haar probability measure on $Z$.  Condition (iv) implies $m(U\cap (U+\rho(g)))>0$ for some $g\in S$, so that $U\cap (U+\rho(g))\neq \varnothing$ for this $g$.  Since $U\subseteq Z$ was an arbitrary open set, this shows that $S$ satisfies $(R_{6})$.

(iii) $\implies$ (iv).  Let $(Z,m,R_{\rho})$ be a group rotation $G$-system.  For  measurable sets $D\subseteq Z$, the map $z\mapsto m(D\cap (D+z))$ is continuous.  If $m(D)>0$, we conclude that $m(D\cap (D+z))>0$ for all $z$ sufficiently close to $0$, so condition (iii) implies $m(D\cap (D+\rho(g)))>0$ for some $g\in S$.  \end{proof}

See Section \ref{sectionCombinatorial} for the definitions of ``measure expanding," ``measure transitive," etc.

\begin{lemma}\label{lemmaExpanding}
Let $G$ be a countable abelian group and $S\subseteq G$.  The following are equivalent.

\begin{enumerate}
  \item[(i)]  $S$ satisfies $(R_{6}^{\bullet})$.
  \item[(ii)] $S$  is dense in the Bohr topology of $G$.
  \item[(iii)] Every translate of $S$ is a set of measurable recurrence for group rotation $G$-systems.
\end{enumerate}

The following are equivalent.

\begin{enumerate}
\item[(iv)] Every translate of $S$ is a set of chromatic recurrence.
  \item[(v)] $S$ satisfies $(R_{5}^{\bullet})$.
  \item[(vi)] $S$ is transitive for minimal $G$-systems.
  \item[(vii)] $S$ is expanding for minimal $G$-systems. \end{enumerate}

The following are equivalent.

\begin{enumerate}
  \item[(viii)] Every translate of $S$ is a set of density recurrence.
  \item[(ix)] $S$ satisfies $(R_{4}^{\bullet})$.
  \item[(x)]  $S$ is measure transitive.
  \item[(xi)]  $S$ is measure expanding.
\end{enumerate}

\end{lemma}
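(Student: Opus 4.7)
My plan is to prove each of the three blocks by the same template: the first equivalence in each block (between the partition- or positive-measure form of recurrence and its dynamical counterpart) is immediate from Lemma \ref{lemmaBohr} or Proposition \ref{propositionCorrespondence} applied to each translate $S+g$ individually, so the real work is to show that the $(R_j^\bullet)$ condition is equivalent to a ``transitivity'' condition on pairs of sets and, where applicable, to an ``expansion'' condition on orbits.

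For the first block, Lemma \ref{lemmaBohr}(iii) lets me rewrite the condition that $S+g$ satisfies $(R_6)$ as $-\rho(g) \in \overline{\rho(S)}$ for every homomorphism $\rho$ from $G$ to a compact abelian group $Z$; quantifying over all $g \in G$ turns this into the assertion $\rho(G) \subseteq \overline{\rho(S)}$ for every such $\rho$, which is exactly $S$ being dense in the Bohr topology of $G$. The equivalence (i)$\iff$(iii) is Lemma \ref{lemmaBohr}(i)$\iff$(iv) applied one translate at a time.

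For the second block, (iv)$\iff$(v) is Proposition \ref{propositionCorrespondence}(ii) applied translate by translate. For (v)$\iff$(vi), given nonempty open $U, V$ in a minimal $G$-system, minimality supplies $h \in G$ with $W := V \cap T^{-h}U$ nonempty; applying $(R_5^\bullet)$ to $W$ via the translate $S-h$ produces $s \in S$ with $W \cap T^{s-h}W \neq \varnothing$, which unwinds to $U \cap T^s V \neq \varnothing$. The converse is immediate by taking $V := T^h U$. For (vi)$\iff$(vii) I would run a standard Baire-category argument: fix a countable basis $\{V_n\}$ of $X$; each $W_n := \bigcup_{s \in S} T^{-s}V_n$ is open and, by transitivity, dense, so $Y := \bigcap_n W_n$ is a dense $G_\delta$, and for every $y \in Y$ the orbit $\{T^s y : s \in S\}$ meets every basis element; the reverse direction is trivial.

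For the third block, (viii)$\iff$(ix) is Proposition \ref{propositionCorrespondence}(i) applied to translates, and (ix)$\iff$(x) follows the topological template: given $C, D$ with $\mu(C \cap T^g D) > 0$, set $E := C \cap T^g D$ and apply $(R_4^\bullet)$ via the translate $S-g$ to $E$ to extract $s \in S$ with $\mu(E \cap T^{s-g}E) > 0$, whence $\mu(C \cap T^s D) > 0$; the converse takes $C := T^{-g}D$ and uses $\mu(C \cap T^{-g}D) = \mu(D) > 0$. The main obstacle is (x)$\iff$(xi), because measure expansion is stated only for ergodic systems while measure transitivity quantifies over all systems. For (xi)$\implies$(x) I would pass to the ergodic decomposition $\mu = \int \mu_\alpha \, d\nu(\alpha)$: on a $\nu$-positive set of components one has $\mu_\alpha(C \cap T^g D) > 0$, measure expansion applied on each such component gives $\mu_\alpha\bigl(\bigcup_{s \in S} T^s D\bigr) = 1$ so some $s(\alpha) \in S$ witnesses $\mu_\alpha(C \cap T^{s(\alpha)}D) > 0$, and countability of $S$ lets me select a single $s$ that works on a $\nu$-positive set of $\alpha$, whence $\mu(C \cap T^s D) > 0$. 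For (x)$\implies$(xi), if in an ergodic system $\mu(D) > 0$ and $E := X \setminus \bigcup_{s \in S} T^s D$ had positive measure, then ergodicity would force $\mu(E \cap T^g D) > 0$ for some $g \in G$, and measure transitivity would then yield $s \in S$ with $\mu(E \cap T^s D) > 0$, contradicting the defining disjointness of $E$ from every $T^s D$.
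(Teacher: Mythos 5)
Your argument is correct, and for the first two blocks it is essentially the paper's: the translate-by-translate use of Lemma \ref{lemmaBohr} and Proposition \ref{propositionCorrespondence}, the $W:=V\cap T^{-h}U$ trick for (v)$\implies$(vi), and the Baire-category construction of the dense $G_{\delta}$ for (vi)$\iff$(vii) all match the printed proof; the paper merely arranges each block as a one-way cycle ((v)$\implies$(vi)$\implies$(vii)$\implies$(iv) and (ix)$\implies$(x)$\implies$(xi)$\implies$(viii)) instead of proving the individual converses you supply. The genuine divergence is in the measure block: the paper never proves (xi)$\implies$(x) directly, but closes its cycle with (xi)$\implies$(viii), using the correspondence principle (Lemma \ref{lemmaMeasureCorrespondence}) to convert a set $A$ with $d^{*}(A)>0$ into a positive-measure set in an \emph{ergodic} system, so that measure expansion applies verbatim; you instead prove (xi)$\implies$(x) via the ergodic decomposition together with countability of $S$, and you also give the direct converse (x)$\implies$(ix). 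Your route has the virtue of not needing Lemma \ref{lemmaMeasureCorrespondence} for this lemma and of producing each implication on its own, but it pays with an appeal to the ergodic decomposition, which strictly speaking requires the underlying probability space to be standard -- a hypothesis the paper's definition of a measure preserving $G$-system does not impose and which the paper's route avoids altogether. To make your (xi)$\implies$(x) airtight in the stated generality, insert a one-line reduction: replace $(X,\mu,T)$ by its factor generated by $\{T^{g}C,\, T^{g}D : g\in G\}$, i.e.\ push $\mu$ forward under $x\mapsto \bigl(1_{T^{g}C}(x),1_{T^{g}D}(x)\bigr)_{g\in G}$ to obtain a $G$-system on a compact metric space in which $C$ and $D$ become cylinder sets with the same joint return-time measures, and decompose there. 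With that addendum everything checks, including the selection step where countability of $S$ yields a single $s$ working on a $\nu$-positive set of ergodic components.
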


\begin{proof}
 The equivalence of (i), (ii), and (iii) is a consequence of Lemma \ref{lemmaBohr} and the definition of the Bohr topology.

%

The equivalence of (iv) and (v) is due to Part (ii) of Proposition \ref{propositionCorrespondence}.

(v) $\implies$ (vi).  Let $(X,T)$ be a minimal topological $G$-system with $U, V \subseteq X$ nonempty open sets.  By the minimality of $(X,T)$ there exists $g\in G$ such that $W:=U\cap T^{g} V\neq \varnothing$.  Since $S-g$ is a set of topological recurrence, there exists $h\in S$ such that $W\cap T^{h-g}W\neq \varnothing$, meaning
\[
U\cap T^{g}V\cap T^{h-g}(U\cap T^{g}V) \neq \varnothing,
\]
which implies $U\cap T^{h} V \neq \varnothing$.  Since $U, V\subseteq X$ are arbitrary nonempty open sets, we have shown that $S$ is transitive for minimal $G$-systems.

(vi) $\implies$ (vii).  Let $(X,T)$ be a minimal topological $G$-system and let $x\in X$.  We will show that for each nonempty open set $U$, there is a dense open set of points $V_{U}$ such that for all $x\in V_{U}$, there exists $g\in S$ such that $T^{g}x\in U$.  Fix a nonempty open set $U\subseteq X$.  By condition (vi) for every nonempty open $W\subseteq X$ there exists $g\in S$ such that $T^{g}W\cap U \neq \varnothing$.  For each open set $W$ and  each $g\in S$ let $W_{g}=W\cap T^{-g}U$.  Then $V_{U}:=\bigcup_{W\subseteq X \text{ open},\, g\in G} W_{g}$ is the desired dense open set.

Let $\mathcal U$ be a countable base for the topology of $X$.  Then $E:=\bigcap_{U\in \mathcal U} V_{U}$ is a dense $G_{\delta}$ set such that for all $x\in E$, $\{T^{g}x: g\in S\}$ is dense in $X$.

(vii) $\implies$ (iv).  Condition (vii) is translation invariant, so it suffices to prove that if (vii) holds, then $S$ is a set of topological recurrence.  Let $(X,T)$ be a minimal topological $G$-system and $U\subseteq X$ a nonempty open set.  Condition (vii) permits us to choose $x\in U$ and $g\in S$ such that $T^{g}x\in U$.  We conclude that $U\cap T^{g}U\neq \varnothing$ for this $g$.  Since $U$ is an arbitrary nonempty open set, we conclude that $S$ is a set of topological recurrence.

The equivalence of (viii) and (ix) follows directly from Part (i) of Proposition \ref{propositionCorrespondence}.

(ix) $\implies$ (x).  Let $S\subseteq G$ satisfy (ix) and let $(X,\mu,T)$ be a measure preserving $G$-system.  Let $C, D \subseteq X$ and $g\in G$ be such that $\mu(C \cap T^{g}D)>0$.  Let $E= C\cap T^{g}D$.  Since $S-g$ is a set of measurable recurrence, there is an $h\in S$ such that $\mu(E\cap T^{h-g}E)>0$.  Consequently
\[
\mu(C\cap T^{g}D \cap T^{h-g}(C\cap T^{g}D)) >0,
\]
which implies $\mu(C\cap T^{h}D)>0$.  Hence $S$ is measure transitive.

(x) $\implies$ (xi).  Let $S\subseteq G$ satisfy condition (x) and let $(X,\mu,T)$ be an ergodic measure preserving $G$-system.  Let $D\subseteq X$ have $\mu(D)>0$, and let $E=\bigcup_{g\in S}T^{g}D$. Suppose, to get a contradiction, that $\mu(X~\setminus~E)>0$. By the ergodicity of $(X,\mu,T)$ there exists $g\in S$ such that $\mu((X\setminus E)\cap T^{g}D)>0$.  Condition (x) then implies there exists $h\in S$ such that $\mu((X\setminus E) \cap T^{h}D)>0$, a contradiction, since $T^{h}D\subseteq E$.

(xi) $\implies$ (viii).  Since condition (xi) is translation invariant, it suffices to prove that $S$ is a set of density recurrence.  Let $A\subseteq G$ have $d^{*}(A)>0$.  Then by Lemma \ref{lemmaMeasureCorrespondence} there is an ergodic measure preserving system $(X,\mu,T)$ and a set $D\subseteq X$ having $\mu(D)>0$ such that $A-A$ contains $\{g\in G: \mu(D\cap T^{g}D)>0\}$.  Now condition (xi) implies that there exists $g\in S$ such that $\mu(D\cap T^{g}D)>0$ (since otherwise $\mu\bigl(\bigcup_{g\in S} T^{g}D\bigr)\leq1-\mu(D)$) and we conclude that $(A-A)\cap S\neq \varnothing$.   Since $A\subseteq G$ was an arbitrary set having $d^{*}(A)>0$, we have shown that $S$ is a set of density recurrence. \end{proof}

\subsection{Minimality and shift spaces}  Let $G$ be a countable abelian group. See \cite[Theorem 1.15]{Furstenberg} for a proof of the following standard lemma.

\begin{lemma}\label{lemmaMinimalSyndetic}
  If $(X,T)$ is a minimal topological $G$-system and $U\subseteq X$ is a nonempty open set, then for all $x\in X$ the set $\{g: T^{g}x\in U\}$ is syndetic.
\end{lemma}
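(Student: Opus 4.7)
The plan is to use the compactness of $X$ together with the defining property of minimality: every orbit is dense, so every point can be translated into $U$ by some group element. By refining this pointwise statement to a uniform one via a finite subcover, we will extract a single finite set $F$ that works for all points simultaneously, which is exactly what syndeticity of the return time set demands.

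First I would fix a nonempty open $U \subseteq X$ and, for each $y \in X$, use minimality of $(X,T)$ to choose some $g_y \in G$ with $T^{g_y} y \in U$. Since the map $z \mapsto T^{g_y} z$ is continuous, there is an open neighborhood $V_y$ of $y$ such that $T^{g_y}(V_y) \subseteq U$. The collection $\{V_y : y \in X\}$ is an open cover of the compact space $X$, so it admits a finite subcover $V_{y_1}, \ldots, V_{y_k}$; let $F := \{-g_{y_1}, \ldots, -g_{y_k}\}$, a finite subset of $G$.

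Next, given any $x \in X$ and any $h \in G$, the point $T^h x$ lies in some $V_{y_i}$, and therefore $T^{g_{y_i} + h} x = T^{g_{y_i}}(T^h x) \in U$. In other words, $g_{y_i} + h \in R(x,U) := \{g \in G : T^g x \in U\}$, which gives $h \in R(x,U) + (-g_{y_i}) \subseteq R(x,U) + F$. Since $h \in G$ was arbitrary, this shows $G = R(x,U) + F$, so $R(x,U)$ is syndetic in the sense of Definition \ref{definitionSyndetic}.

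There is no genuine obstacle here; the only point to watch is bookkeeping with signs, since $G$ is written additively and the definition of syndeticity requires $R(x,U) + F = G$ rather than $F + R(x,U) = G$ (which is the same in an abelian group), and one must track whether $F$ should consist of the $g_{y_i}$ or their negatives. The argument uses only minimality, continuity of the $G$-action, and compactness of $X$, and it automatically yields the \emph{same} finite set $F$ for every starting point $x$.
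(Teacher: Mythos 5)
Your proof is correct: minimality gives, for each $y\in X$, some $g_{y}$ with $T^{g_{y}}y\in U$, compactness extracts a finite subcover from the neighborhoods $(T^{g_{y}})^{-1}(U)$, and the resulting finite set $F$ of negated exponents witnesses $R(x,U)+F=G$ uniformly in $x$. The paper does not prove this lemma itself but cites \cite[Theorem 1.15]{Furstenberg}, and your argument is precisely that standard compactness proof, so nothing further is needed.
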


Consider the compact metrizable space $X=\{0,1\}^{G}$.  The elements of $G$ are functions $x:G\to \{0,1\}$.  Let $\sigma$ be the action of $g$ on $X$ defined by $(\sigma^{g}x)(h)=x(h+g)$ for each $g, h\in G$, $x\in X$.  We call the topological $G$-system $(X,\sigma)$ the \emph{shift space} and $\sigma$ the \emph{shift action}. Note that a sequence of elements $x_{n}$ converges to $x\in X$ if and only if for every finite $F\subseteq G$, $x_{n}|_{F} = x|_{F}$ for all sufficiently large $n$, meaning $X$ has the topology of pointwise convergence.

Note that for $A\subset G$, $1_{A}\in X$ and for $g\in G$, $\sigma^{g}1_{A}=1_{A-g}$.

For the next lemma, recall that a set $A\subseteq G$ is thick if for every finite $F\subseteq G$, there is a $g\in G$ such that $F+g\subseteq A$ (Definition \ref{definitionSyndetic}).

\begin{lemma}\label{lemmaPWorbits}
Let $G$ be a countable abelian group and let $(X,\sigma)$ be the corresponding shift space.  Let $A\subseteq G$. Consider the following conditions.

\begin{enumerate}
  \item[(i)]  $A+F$ is thick for some finite set $F\subseteq G$.

  \item[(ii)]  The orbit closure $\overline{\{\sigma^{g}1_{A}: g\in G\}}$ contains a minimal subsystem not equal to $\{x_{0}\}$, where $x_{0} \in \{0,1\}^{G}$ is the constant $0$ function.
\end{enumerate}
Then \textup{(i) $\implies$ (ii)}.

\end{lemma}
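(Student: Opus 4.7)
The plan is to locate a point $z = 1_{A'} \in Y := \overline{\{\sigma^{g} 1_{A} : g \in G\}}$ with $A' \subseteq G$ syndetic, and then observe that the entire orbit closure of $z$ inside $Y$ consists of indicators of syndetic sets, so any minimal subsystem contained in that orbit closure avoids $x_{0}$.

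To produce such a $z$, I would use a diagonal compactness argument together with pigeonhole. Enumerate $G = \{h_{1},h_{2},\dots\}$ and set $E_{n} := \{h_{1},\dots,h_{n}\}$. Thickness of $A+F$ supplies $g_{n}\in G$ with $E_{n}+g_{n}\subseteq A+F$, equivalently $E_{n}\subseteq (A-g_{n})+F$. Since $X=\{0,1\}^{G}$ is compact, pass to a subsequence along which $\sigma^{g_{n}}1_{A} = 1_{A-g_{n}}$ converges pointwise to some $z=1_{A'}\in Y$. For each fixed $i$, once $n\geq i$ we have $h_{i}+g_{n}\in A+F$, so there is $f\in F$ with $h_{i}-f\in A-g_{n}$, i.e.\ $(\sigma^{g_{n}}1_{A})(h_{i}-f)=1$. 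Because $F$ is finite, some $f_{i}\in F$ serves for infinitely many $n$; passing to the pointwise limit gives $1_{A'}(h_{i}-f_{i})=1$, so $h_{i}\in A'+F$. Ranging over $i$ yields $A'+F=G$, so $A'$ is syndetic.

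Next I would verify that every point in $Z := \overline{\{\sigma^{g}z:g\in G\}}$ has the form $1_{A''}$ with $A''$ syndetic. Each shift $\sigma^{g}z=1_{A'-g}$ trivially inherits $(A'-g)+F=G$. If $\sigma^{g_{n}'}z\to 1_{A''}$ pointwise and $h\in G$, then for each $n$ there is $f\in F$ with $h-f\in A'-g_{n}'$; pigeonhole over $F$ gives a single $f$ that works for infinitely many $n$, and the pointwise limit yields $h-f\in A''$. Hence $A''+F=G$, so in particular $A''\neq\varnothing$ and $1_{A''}\neq x_{0}$.

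Finally, Zorn's lemma applied to the nonempty closed $\sigma$-invariant subset $Z\subseteq Y$ produces a minimal subsystem $M\subseteq Z$; by the previous paragraph, $M\neq\{x_{0}\}$. The main idea is the pigeonhole step that promotes thickness of $A+F$ into syndeticity of $A'$. Once that is in place, the remainder is a routine compactness argument together with the observation that the set $\{1_{B}:B+F=G\}$ is closed in $X$ (so syndeticity with respect to a fixed $F$ is preserved under pointwise limits and shifts).
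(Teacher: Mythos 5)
Your proposal is correct and follows essentially the same route as the paper: take a pointwise limit of the shifts $\sigma^{g_{n}}1_{A}$ coming from thickness along an exhaustion of $G$, show the limit point's support is syndetic relative to the fixed finite set $F$ (the paper does this via a liminf of the finite sums $\sum_{h\in F}1_{A-g_{n}}(g-h)$, you via pigeonhole on $F$, which is the same finiteness argument), observe this property persists throughout that point's orbit closure so $x_{0}$ is excluded, and then take any minimal subsystem inside it. The only difference is that you spell out explicitly the closedness/shift-invariance step that the paper leaves implicit.
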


\begin{proof} Let $F\subseteq G$ be a finite set such that $F+A$ is thick.  Let $E_{1}\subseteq E_{2}\subseteq \dots$ be an increasing sequence of finite sets whose union is $G$.  For each $i\in \mathbb N$, let $g_{i}$ satisfy $E_{i}+g_{i}\subseteq F+A$.  Consider the points $\sigma^{g_{i}}1_{A}= 1_{A-g_{i}}$, and let $x\in X$ be a limit of these points.

  \begin{claim}
    For all $g\in G$, there exists $h\in F$ such that $x(g-h)=1$.
  \end{claim}
\begin{proof}[Proof of Claim]
 Let $g\in G$, and choose $N$ so that $g\in E_{i}$ for all $i\geq N$.  For each $i\geq N$, we have that $A-g_{i}+F\supset E_{i}\ni g$, meaning
 \[\sum_{h\in F}1_{A-g_{i}}(g-h)\geq 1.\]  Then \begin{equation}\label{equationFinite}
   1\leq \liminf_{i\to \infty} \sum_{h\in F} 1_{A-g_{i}}(g-h)\leq  \sum_{h\in F} x(g-h),
 \end{equation} so $x(g-h)=1$ for some $h\in F$.
\end{proof}
   The Claim implies that $x_{0}$ is not in the orbit closure $Y$ of $x$. It follows that no minimal subsystem of $X$ is contained in the orbit closure of $x$ is equal to $\{x_{0}\}$.  Since $x$ is in the orbit closure of $1_{A}$, we can consider any minimal subsystem contained in $Y$ to establish condition (ii).
\end{proof}

\subsection{Piecewise syndeticity}

Condition (i) in the following lemma is the standard definition of ``piecewise syndetic," condition (iii) is our definition.

\begin{lemma}\label{lemmaPWSequivalents}
  Let $G$ be a countable abelian group and $A\subseteq G$.  The following are equivalent.

  \begin{enumerate}
    \item[(i)] There is a syndetic set $S\subseteq G$ and a thick set $R\subseteq G$ such that $S\cap R\subseteq A$.
    \item[(ii)] There is a finite set $F$ such that $A+F$ is thick.
    \item[(iii)] There is a syndetic set $S\subseteq G$ such that for all finite $F\subseteq S$, there is a $g\in G$ such that $F+g\subseteq S$.  \end{enumerate}
\end{lemma}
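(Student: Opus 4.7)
The plan is to prove the cycle (i) $\iff$ (ii) $\iff$ (iii). As written, condition (iii) must contain a typo — the inclusion $F+g\subseteq S$ holds trivially with $g=0$ — so I interpret it with $F+g\subseteq A$, matching Definition \ref{definitionSyndetic}. For (i) $\Rightarrow$ (ii), I choose a finite set $F$ with $S+F=G$; given finite $E\subseteq G$, thickness of $R$ produces $g$ with $(E-F)+g\subseteq R$, and writing each $e+g=s_e+f_e$ with $s_e\in S$, $f_e\in F$, one finds $s_e=e+g-f_e\in R\cap S\subseteq A$, hence $e+g\in A+F$, showing $A+F$ is thick. For (ii) $\Rightarrow$ (i), I take $R:=A+F$ (thick by hypothesis) and $S:=A\cup(G\setminus R)$; then $S\cap R\subseteq A$ is immediate, and $S$ is syndetic since $R=A+F\subseteq S+F$ and $G\setminus R\subseteq S$ jointly give $S+(F\cup\{0\})=G$.

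For (iii) $\Rightarrow$ (ii), I again select a finite $F$ with $S+F=G$ and, for a finite $E\subseteq G$, write each $e\in E$ as $s_e+f_e$ with $s_e\in S$, $f_e\in F$. Applying (iii) to the finite subset $F_0:=\{s_e:e\in E\}\subseteq S$ yields $g$ with $F_0+g\subseteq A$, whence $e+g=(s_e+g)+f_e\in A+F$ for every $e\in E$, so $A+F$ is thick.

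The main obstacle is (ii) $\Rightarrow$ (iii); here the dynamical machinery of Lemmas \ref{lemmaPWorbits} and \ref{lemmaMinimalSyndetic} earns its keep. Given $A+F$ thick, Lemma \ref{lemmaPWorbits} furnishes a minimal subsystem $M$ of the orbit closure of $1_A$ in $\{0,1\}^G$ with $M\ne\{x_0\}$. I pick $y_0\in M\setminus\{x_0\}$, translate so that $y:=\sigma^h y_0$ satisfies $y(0)=1$, and set $S':=y^{-1}(1)$. The clopen set $U:=\{z\in X:z(0)=1\}$ meets $M$ at $y$, so Lemma \ref{lemmaMinimalSyndetic} applied to the minimal system $(M,\sigma)$ and the nonempty open set $U\cap M$ shows that $S'=\{g\in G:\sigma^g y\in U\}$ is syndetic. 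For any finite $F_0\subseteq S'$, the function $y$ is identically $1$ on $F_0$; approximating $y$ in the product topology by translates $\sigma^{h_n}1_A=1_{A-h_n}$, for all sufficiently large $n$ we have $1_A(f+h_n)=1$ for every $f\in F_0$, i.e., $F_0+h_n\subseteq A$, as required.
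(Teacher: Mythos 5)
Your proof is correct, and you were right to read the final inclusion in (iii) as $F+g\subseteq A$: as printed it is a typo, which the paper's Definition \ref{definitionSyndetic} and its own use of (iii) confirm. Three of your four implications coincide with the paper's argument: (i) $\implies$ (ii) is the same covering-plus-thickness computation; (iii) $\implies$ (ii) is the implication the paper dismisses as ``straightforward,'' and your version is a correct spelling-out; and (ii) $\implies$ (iii) runs exactly through Lemmas \ref{lemmaPWorbits} and \ref{lemmaMinimalSyndetic}, with your explicit translation of $y_{0}$ to arrange $y(0)=1$ making precise a point the paper glosses over. The genuine divergence is (ii) $\implies$ (i). The paper proves it dynamically: it takes a minimal subsystem $Y$ of the orbit closure of $1_{A}$, invokes Furstenberg's Proposition 8.6 to find $y\in Y$ proximal to $1_{A}$, and uses the window characterization (\ref{equationWindow}) of proximality to manufacture the thick set $R$ on which $1_{A}$ agrees with $y$, taking $S:=y^{-1}(1)$ syndetic. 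You instead give a purely combinatorial construction: $R:=A+F$ is thick by hypothesis, and $S:=A\cup(G\setminus R)$ satisfies $S\cap R=A\cap R\subseteq A$ and is syndetic because $S+(F\cup\{0\})\supseteq (A+F)\cup(G\setminus R)=G$. I checked this and it is correct; it is more elementary, avoiding proximality and the citation to Furstenberg entirely, at the cost of producing a less structured syndetic set than the dynamically defined one -- which for the purposes of this lemma costs nothing.
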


\begin{proof}
(i) $\implies$ (ii)    Let $S, R\subseteq G$ be syndetic and thick sets, respectively, such that $S\cap R\subseteq A$.   Let $F\subseteq G$ be a finite set such that $S+F=G$ and let $K$ be an arbitrary finite subset of $S$.

Choose $g$ so that $K-F+g\subseteq R$.  Since $S+F=G$, we may choose elements $s_{1},\dots, s_{n}\in S$ and $f_{1},\dots, f_{n}\in F$ so that $K+g=\{s_{i}+f_{i}: 1\leq i \leq n\}$.  Then for each $i\leq n$, $s_{i}\in K-F+g$, so $s_{i}\in S\cap R$.  It follows that $K+g\subseteq (S\cap R)+F$, so $A$ satisfies condition (ii).

(ii) $\implies$ (iii)  Suppose $F\subseteq G$ is finite and $A+F$ is thick.  Let $X=\{0,1\}^{G}$ be the shift space with shift action $\sigma$, and let $1_{A}\in X$ be the characteristic function of $A$.  By Lemma \ref{lemmaPWorbits} there is a minimal subsystem $Y$ contained in the orbit closure of $1_{A}$ which is not equal to $\{x_{0}\}$, where $x_{0}$ is the constant $0$ function.  If $y\in Y$, Lemma \ref{lemmaMinimalSyndetic} implies the set $S:=\{g: y(g)=1\}$ is syndetic.  Since $y$ is in the orbit closure of $1_{A}$, we have that for all finite $E\subseteq G$, there exists $g_{E}\in G$ such that $1_{A}(g)=y(g+g_{E})$ for all $g\in E$.  It follows that $A$ contains $(S\cap E)-g_{E}$ for each finite $E\subseteq G$, so condition (iii) is satisfied.

The implication (iii) $\implies$ (ii) is straightforward.

To prove (ii) $\implies$ (i) we need the following definitions.

Let $(X,T)$ be a topological $G$-system and $d$ a metric on $X$ generating the topology on $X$.  We say that $x, y\in X$ are \emph{proximal} if there is a sequence of elements $g_{n}\in G$ such that $d(T^{g_{n}}x,T^{g_{n}}y)\to 0$ as $n\to \infty$.

If $A\subseteq X$ and $x\in X$, we say that $x$ is \emph{proximal to $A$} if there is a sequence of elements $g_{n}\in G$ such that $\inf_{a\in A} d(T^{g_{n}}x,T^{g_{n}}a)\to 0$ as $n\to \infty$.

Proposition 8.6 of \cite{Furstenberg} says that if $A$ is a $T$-invariant closed subset of $X$ and $x\in X$ is proximal to $A$, then $x$ is proximal to some $y\in A$.

In the shift space $(X,\sigma)$, proximality of $x$ and $y$ is equivalent to the following condition:
\begin{align}\label{equationWindow}  \text{For all finite $F\subseteq G$, there is a $g\in G$ such that $x|_{F+g}=y|_{F+g}$.}
\end{align}
We now prove (ii) $\implies$ (i).  Let $A, F\subseteq G$ be such that $F$ is finite and $A+F$ is thick.  Let $(X,\sigma)$ be the shift space, and by Lemma \ref{lemmaPWorbits} let $Y\subseteq X$ be a minimal subsystem of the orbit closure of $1_{A}$ such that $x_{0}\notin Y$.  By Proposition 8.6 of \cite{Furstenberg} we may choose $y\in Y$ such that $1_{A}$ is proximal to $y$.  The minimality of $Y$ implies $S:=\{g: y(g)=1\}$ is syndetic, and condition (\ref{equationWindow}) then implies $A$ satisfies condition (i). \end{proof}

\begin{lemma}\label{lemmaPWPartitionRegular}  Let $G$ be a countable abelian group.  If $G=\bigcup_{i=1}^{k} A_{i}$, then there is an $i\leq k$
 such that $A_{i}$ is piecewise syndetic.  Consequently, if $A, B\subseteq G$ are not piecewise syndetic, then $A\cup B$ is not piecewise syndetic.
\end{lemma}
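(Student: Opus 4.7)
The plan is to prove both parts via a shift space argument using minimal subsystems and Lemma \ref{lemmaMinimalSyndetic}. For the first part, embed the partition into the shift space $X = \{1, \ldots, k\}^G$ with shift action $\sigma$, by defining $x \in X$ with $x(g) = i$ if and only if $g \in A_i$. Take a minimal subsystem $Y$ of the orbit closure of $x$, and fix any $y \in Y$. Since the values of $y$ lie in $\{1, \ldots, k\}$, some index $i$ occurs as $y(g_0)$ for at least one $g_0 \in G$; then $U_i := \{z \in Y : z(0) = i\}$ is a nonempty open subset of $Y$ (by applying $\sigma^{g_0}$). By Lemma \ref{lemmaMinimalSyndetic}, the return set $S_i := \{g : \sigma^g y \in U_i\} = \{g : y(g) = i\}$ is syndetic. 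Since $y$ lies in the orbit closure of $x$, for every finite $F \subseteq G$ there exists $g \in G$ with $x(h + g) = y(h)$ for every $h \in F$; taking $F$ to be an arbitrary finite subset of $S_i$, each $h \in F$ yields $x(h+g) = i$, i.e. $F + g \subseteq A_i$. This is precisely the paper's definition of $A_i$ being piecewise syndetic.

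For the consequently, the contrapositive suffices: if $A \cup B$ is piecewise syndetic then $A$ or $B$ is. By characterization (ii) of Lemma \ref{lemmaPWSequivalents}, there is a finite $F \subseteq G$ such that $(A + F) \cup (B + F)$ is thick. Set $T_1 = A + F$, $T_2 = B + F$, and form $\tilde{x} \in \{0, 1, 2\}^G$ by $\tilde{x}(g) = 1$ for $g \in T_1$, $\tilde{x}(g) = 2$ for $g \in T_2 \setminus T_1$, and $\tilde{x}(g) = 0$ otherwise. Thickness of $T_1 \cup T_2$ gives, via a compactness argument on translates by $g_n$ with $E_n + g_n \subseteq T_1 \cup T_2$ for an exhausting sequence $E_n \nearrow G$, a point in the orbit closure $Y$ of $\tilde{x}$ taking only values in $\{1, 2\}$; hence the closed shift-invariant set $Y \cap \{1, 2\}^G$ is nonempty and contains a minimal subsystem $Y_0$ by Zorn's lemma. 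Repeating the first-part argument on the $2$-coloring induced on $Y_0$ produces some $T_j$ which is piecewise syndetic, and then $T_j = A + F$ (respectively $B + F$) being piecewise syndetic implies $A$ (respectively $B$) is piecewise syndetic: if $T_j + F'$ is thick then $A + (F + F')$ is thick, so characterization (ii) applies to $A$ directly.

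The main delicate point is ensuring the minimal subsystem $Y_0 \subseteq Y \cap \{1, 2\}^G$ exists in the consequently step; this rests on the thickness of $T_1 \cup T_2$ guaranteeing $Y \cap \{1,2\}^G$ is nonempty, after which closedness and shift-invariance are automatic. The rest is bookkeeping: both applications of the minimal-subsystem argument require only Lemma \ref{lemmaMinimalSyndetic} and the definition of the orbit closure, and the translation between characterizations (ii) and (iii) of piecewise syndeticity (Lemma \ref{lemmaPWSequivalents}) lets us move freely between the thickness and syndetic-intersection-of-thick formulations as needed.
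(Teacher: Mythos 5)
Your proof is correct, and it takes a genuinely different route from the paper: the paper does not prove this lemma itself but cites Theorem 4.40 of \cite{HindmanStrauss}, where partition regularity of piecewise syndeticity is derived from the algebraic structure of the Stone--\v{C}ech compactification $\beta G$ (ultrafilters and the smallest ideal). Your argument instead stays entirely within the paper's own appendix toolkit -- the shift space, existence of minimal subsystems, Lemma \ref{lemmaMinimalSyndetic}, and the equivalences of Lemma \ref{lemmaPWSequivalents} -- in effect re-running the paper's proof of (ii) $\implies$ (iii) of Lemma \ref{lemmaPWSequivalents} over a $k$-letter alphabet for the first sentence and a two-letter alphabet for the second. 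What this buys is self-containedness (no external black box) at the cost of a slightly longer argument; what the citation buys is the stronger statement that \emph{any} finite partition of a piecewise syndetic set has a piecewise syndetic cell, of which your ``consequently'' step is the two-cell instance. You correctly recognized that the second sentence does not follow formally from the first (the piecewise syndetic cell of $G = A \cup B \cup (G\setminus(A\cup B))$ could be the complement), and your workaround -- passing to $T_{1}=A+F$, $T_{2}=B+F$ with $T_{1}\cup T_{2}$ thick, extracting a limit of translates supported on $\{1,2\}$, and running the minimal-subsystem argument inside $Y\cap\{1,2\}^{G}$ -- is sound, as is the final transfer from piecewise syndeticity of $A+F$ back to $A$ via characterization (ii). One cosmetic repair: the hypothesis is only a cover $G=\bigcup_{i=1}^{k}A_{i}$, so ``$x(g)=i$ if and only if $g\in A_{i}$'' need not be well defined; let $x(g)$ be the least $i$ with $g\in A_{i}$, and note that piecewise syndeticity in the sense of Definition \ref{definitionSyndetic} passes from a subset to any superset, so the conclusion for $\{g: x(g)=i\}$ still gives it for $A_{i}$.
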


For a proof, see Theorem 4.40 of \cite{HindmanStrauss}.

\subsection{Correspondence Principle}
We need the following lemma to relate the properties $(R_{i})$ and $(R_{i}^{\bullet})$ to properties of difference sets and state various forms of the conditions $(R_{i})$ defined in Section \ref{sectionDefinitions}.

\begin{lemma}\label{lemmaMeasureCorrespondence}
  Let $G$ be a countable abelian group and let $A\subseteq G$ have $d^{*}(A)>0$.  Then there is an ergodic measure preserving $G$-system $(X,\mu,T)$ and a set $D\subseteq X$ with $\mu(D)\geq d^{*}(A)$ such that $A-A$ contains $\{g: \mu(D\cap T^{g}D)>0\}$.
\end{lemma}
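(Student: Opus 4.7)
The plan is to construct $(X,\mu,T)$ by the standard Furstenberg correspondence, applied to the shift action on $\{0,1\}^{G}$, and then pass to a suitable ergodic component to obtain ergodicity while preserving the density lower bound for $D$ and the containment of return-time sets in $A-A$.

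First I would choose a F{\o}lner sequence $(\Phi_{n})_{n\in\mathbb N}$ in $G$ such that $d^{*}(A)=\lim_{n\to\infty}|A\cap\Phi_{n}|/|\Phi_{n}|$, which is possible by the discussion in Section \ref{sectionUBD}. Let $X=\{0,1\}^{G}$ with the product topology and let $T=\sigma$ be the shift action defined by $(\sigma^{g}y)(h)=y(h+g)$; then $X$ is compact and metrizable, and $\sigma$ acts by homeomorphisms. Set $x_{0}=1_{A}\in X$ and consider the probability measures
\[
\mu_{n}:=\frac{1}{|\Phi_{n}|}\sum_{g\in\Phi_{n}}\delta_{\sigma^{g}x_{0}}.
\]
By weak-$*$ compactness of $\mathcal P(X)$, we may pass to a subsequence (still denoted $\mu_{n}$) converging weakly to some $\mu\in\mathcal P(X)$. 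The F{\o}lner property of $(\Phi_{n})$ gives, for each fixed $g\in G$ and each continuous $f\in C(X)$, that $\int f\circ\sigma^{g}\,d\mu_{n}-\int f\,d\mu_{n}\to 0$, so $\mu$ is $\sigma$-invariant.

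Next I would let $D:=\{y\in X:y(0)=1\}$, a clopen cylinder, so $\mathbf 1_{D}$ is continuous. Then
\[
\mu(D)=\lim_{n\to\infty}\mu_{n}(D)=\lim_{n\to\infty}\frac{|A\cap\Phi_{n}|}{|\Phi_{n}|}=d^{*}(A).
\]
For any $g\in G$, the set $D\cap\sigma^{g}D=\{y:y(0)=y(g)=1\}$ is also clopen, and
\[
\mu(D\cap\sigma^{g}D)=\lim_{n\to\infty}\frac{|A\cap(A-g)\cap\Phi_{n}|}{|\Phi_{n}|}.
\]
If this limit is positive, then $A\cap(A-g)\neq\varnothing$, so $g\in A-A$. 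Hence $\{g:\mu(D\cap T^{g}D)>0\}\subseteq A-A$ for the invariant (not necessarily ergodic) system $(X,\mu,\sigma)$.

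The main obstacle is ergodicity. I would invoke the ergodic decomposition $\mu=\int\nu\,dP(\nu)$, where $P$ is a probability measure on the ergodic invariant probability measures for $\sigma$. Averaging gives $\int\nu(D)\,dP(\nu)=\mu(D)=d^{*}(A)$, so the set $\mathcal E_{1}:=\{\nu:\nu(D)\geq d^{*}(A)\}$ has positive $P$-measure. For each $g$ with $\mu(D\cap\sigma^{g}D)=0$, the same averaging argument forces $\nu(D\cap\sigma^{g}D)=0$ for $P$-a.e.\ $\nu$; since $G$ is countable, the set $\mathcal E_{2}$ of ergodic components $\nu$ for which $\nu(D\cap\sigma^{g}D)=0$ for every such $g$ has full $P$-measure. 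Pick any $\nu\in\mathcal E_{1}\cap\mathcal E_{2}$ (nonempty since $P(\mathcal E_{1})>0$ and $P(\mathcal E_{2})=1$); then $(X,\nu,\sigma)$ is ergodic, $\nu(D)\geq d^{*}(A)$, and $\{g:\nu(D\cap\sigma^{g}D)>0\}\subseteq\{g:\mu(D\cap\sigma^{g}D)>0\}\subseteq A-A$. This $\nu$ and $D$ are the required ergodic system and set.
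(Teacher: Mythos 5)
Your proof is correct and is essentially the argument the paper itself relies on: the paper proves this lemma by appeal to the standard Furstenberg correspondence principle (Furstenberg's Theorem 3.18, adapted to a general countable abelian group), which is exactly your weak-$*$ limit of the empirical measures $\mu_{n}$ along a F{\o}lner sequence realizing $d^{*}(A)$, with ergodicity then extracted via the ergodic decomposition, just as you do. The only nitpick is the harmless sign slip: with the shift convention $(\sigma^{g}y)(h)=y(h+g)$ one has $\sigma^{g}D=\{y:y(-g)=1\}$ rather than $\{y:y(g)=1\}$, which does not affect the conclusion since $A-A$ is symmetric.
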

Lemma \ref{lemmaMeasureCorrespondence} is proved for $G=\mathbb Z$ in Theorem 3.18 of \cite{Furstenberg}.  Ergodicity is not mentioned there, but the proof is easily modified to obtain it.  For an outline of a proof in the general case, see \cite[Lemma 5.1]{Griesmer3Fold}.

\subsection{Implications}\label{sectionImplications} We prove $(R_{1}) \implies (R_{2})$ and briefly discuss the implications $(R_{i})\implies (R_{i+1})$ for $i\geq 2$.

We need some tools from harmonic analysis on compact abelian groups, as presented in \cite{RudinFourier}.  If $G$ is a countable abelian group, equip $G$ with the discrete topology, and let $\widehat{G}$ denote the group of homomorphisms (or \emph{characters}) $\chi:G\to \mathcal S^{1}\subseteq \mathbb C$ with the topology of pointwise convergence and the group operation of pointwise multiplication.  Then $\widehat{G}$ is a compact abelian group.  We write $\chi_{0}$ for the identity element of $\widehat{G}$, which is the constant character: $\chi_{0}(g)=1$ for all $g\in G$.

\begin{lemma}
  $(R_{1}) \implies (R_{2})$.
\end{lemma}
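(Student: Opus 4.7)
The plan is to prove $(R_1)\implies(R_2)$ via spectral theory for unitary representations of $G$, along the classical Bochner/Herglotz route.

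First I would fix a measure preserving $G$-system $(X,\mu,T)$ and a measurable $D\subseteq X$; the conclusion is vacuous when $\mu(D)=0$, so assume $\mu(D)>0$. Set $f=1_D\in L^2(\mu)$ and let $U^g$ denote the Koopman operator $U^g f(x)=f(T^{-g}x)$, so that $U$ is a unitary representation of $G$. The function $g\mapsto\langle f,U^g f\rangle=\mu(D\cap T^g D)$ is positive definite, so by Bochner's theorem applied to the discrete abelian group $G$ (with compact dual $\widehat{G}$), there is a unique finite positive Borel measure $\sigma_f$ on $\widehat{G}$ satisfying
\[
\mu(D\cap T^g D)=\int_{\widehat{G}}\chi(g)\,d\sigma_f(\chi)\quad\text{for all }g\in G.
\]

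Next I would identify $\sigma_f(\{\chi_0\})$ with $\|Pf\|_2^2$, where $P$ is orthogonal projection in $L^2(\mu)$ onto the subspace of $G$-invariant functions: this is a direct consequence of the functional calculus, because the indicator of $\{\chi_0\}\subseteq\widehat{G}$ corresponds via the spectral theorem to the projection onto invariants. The mean ergodic theorem for actions of the countable amenable group $G$ gives $\int Pf\,d\mu=\int f\,d\mu=\mu(D)$, and since $Pf\geq 0$ on a probability space, Cauchy--Schwarz yields
\[
\sigma_f(\{\chi_0\})=\|Pf\|_2^2\geq\Bigl(\int Pf\,d\mu\Bigr)^{2}=\mu(D)^2.
\]

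Then I would apply the hypothesis. Let $S_j\subseteq S$ be finite subsets witnessing $(R_1)$. Averaging the integral representation over $S_j$ gives
\[
\frac{1}{|S_j|}\sum_{g\in S_j}\mu(D\cap T^g D)=\int_{\widehat{G}}\Bigl(\frac{1}{|S_j|}\sum_{g\in S_j}\chi(g)\Bigr)d\sigma_f(\chi).
\]
The inner average equals $1$ at $\chi_0$, is bounded by $1$ in absolute value throughout, and tends to $0$ at every non-constant $\chi$ by the definition of equidistribution. Since $\sigma_f$ is a finite measure, dominated convergence shows that the right-hand side tends to $\sigma_f(\{\chi_0\})\geq\mu(D)^2$ as $j\to\infty$.

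Finally, given $c<\mu(D)^2$, choose $j$ large enough that the averaged correlation exceeds $c$; then some individual term $\mu(D\cap T^g D)$ with $g\in S_j\subseteq S$ must exceed $c$, which is exactly $(R_2)$. The main technical point I expect to handle carefully is the identification of the point mass $\sigma_f(\{\chi_0\})$ and its lower bound by $\mu(D)^2$ via the mean ergodic theorem; the rest is a routine dominated convergence argument against the equidistribution hypothesis.
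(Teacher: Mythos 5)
Your proof is correct and follows essentially the same route as the paper: Bochner--Herglotz gives a spectral measure $\sigma_f$ on $\widehat{G}$, equidistribution plus dominated convergence makes the averaged correlations converge to $\sigma_f(\{\chi_0\})$, and this atom is bounded below by $\mu(D)^2$ via the projection onto invariant functions. The only cosmetic difference is that you identify $\sigma_f(\{\chi_0\})=\|Pf\|_2^2$ directly from the spectral theorem, whereas the paper obtains the same identification by averaging along a F{\o}lner sequence and invoking the mean ergodic theorem; both are standard and the argument is sound.
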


\begin{proof}  Let $S$ satisfy ($R_{1}$), so that we may choose a sequence $(S_{j})_{j\in \mathbb N}$ of finite sets $S_{j}\subseteq S$, satisfying
\begin{align}\label{equationEquidsitribution}
  \lim_{j\to \infty} \frac{1}{|S_{j}|}\sum_{g\in S_{j}} \chi(g) =0 && \text{ for all } \chi\in \widehat{G}\setminus\{\chi_{0}\}.
\end{align}

  Let $(X,\mu,T)$ be a measure preserving $G$-system and $D\subseteq X$ a measurable set.  We will prove that
\begin{align}\label{equationErgodicInequality}
  \lim_{j\to \infty} \frac{1}{|S_{j}|}\sum_{g\in S_{j}}\mu(D\cap T^{g}D)\geq \mu(D)^{2},
\end{align}
which implies that for all $\varepsilon>0$ there exists $j\in \mathbb N$ and $g\in S_{j}$ such that $\mu(D\cap T^{g}D)>\mu(D)^{2}-\varepsilon$.  It therefore suffices to prove Inequality (\ref{equationErgodicInequality}) to show that $S$ satisfies ($R_{2}$).

   Let $f=1_{D}$.  Then $f\in L^{2}(\mu)$, and $\mu(D\cap T^{g}D) = \int f\cdot f\circ T^{g} \,d\mu$.  The action $U_{T}$ of $G$ on $L^{2}(\mu)$ given by $U_{T}^{g} f = f\circ T^{g}$ is unitary, meaning that for each $g$, $U_{T}^{g}:L^{2}(\mu)\to L^{2}(\mu)$ is an invertible linear isometry.  The Bochner-Herglotz theorem therefore implies the existence of a positive Borel measure $\sigma$ on $\widehat{G}$ such that $\int  f\cdot f\circ T^{g} \,d\mu = \int \chi(g) \,d\sigma(\chi)$ for all $g\in G$.  We have $\sigma(\widehat{G}) =  \int 1_{\widehat{G}} \,d\sigma(\chi) = \int \chi(0)\, d\sigma(\chi) = \mu(D)$. Then
\begin{align}
\label{equationAverageLimit} \begin{split}
    \lim_{j\to \infty} \frac{1}{|S_{j}|}\sum_{g\in S_{j}} \mu(D\cap T^{g}D) &=\lim_{j\to \infty} \frac{1}{|S_{j}|}\sum_{g\in S_{j}}\int f\cdot f\circ T^{g}\,d\mu\\
  &= \lim_{j\to \infty} \frac{1}{|S_{j}|}\sum_{g\in S_{j}}\int \chi(g)\, d\sigma(\chi)\\
  &= \int \lim_{j\to \infty} \frac{1}{|S_{j}|}\sum_{g\in S_{j}} \chi(g)\, d\sigma(\chi)\\
  &= \int 1_{\{\chi_{0}\}}(\chi) \,d\sigma(\chi)\\
  &= \sigma(\{\chi_{0}\}).
\end{split}
\end{align}
The limit in (\ref{equationAverageLimit}) is therefore independent of the sequence $(S_{j})_{j\in \mathbb N}$, as long as Equation (\ref{equationEquidsitribution}) is satisfied.  When $\mathbf{\Phi}=(\Phi_{j})_{j\in \mathbb N}$ is a F{\o}lner sequence,  $\mathbf{\Phi}$ satisfies (\ref{equationEquidsitribution}) and the mean ergodic theorem (see \cite{Glasner}) implies that $\lim_{j\to \infty} \frac{1}{|\Phi_{j}|}\sum_{g\in \Phi_{j}} f\circ T^{g} = Pf$, where $Pf$ is the orthogonal projection of $f$ onto the closed space of $T$-invariant functions in $L^{2}(\mu)$.  By (\ref{equationAverageLimit}), we then have
\begin{align*}
  \sigma(\{\chi_{0}\}) &= \int f\cdot Pf \,d\mu\\
  &= \int Pf\cdot Pf \,d\mu && \text{since $P$ is an orthogonal projection}\\
  &\geq \Bigl(\int Pf \,d\mu\Bigr)^{2} && t\mapsto t^{2}\text{ is convex}\\
  &= \Bigl( \int f \,d\mu\Bigr)^{2} && P1_{X} = 1_{X}\\
  &= \mu(D)^{2} && f=1_{D},
\end{align*}
so $\sigma(\{\chi_{0}\})\geq \mu(D)^{2}$, and (\ref{equationAverageLimit}) then implies Inequality (\ref{equationErgodicInequality}). \end{proof}

The implications $(R_{2}) \implies (R_{3})$ and $(R_{3})\implies (R_{4})$ are straightforward.  The implication $(R_{4})\implies (R_{5})$ is a consequence of the Bogoliouboff-Kryloff Theorem: every topological $G$-system $(X,T)$ admits a $T$-invariant probability measure.  Consequently, every minimal topological $G$-system $(X,T)$ admits a $T$-invariant probability measure $\mu$ having full support, since the support of a $T$-invariant measure is a $T$-invariant compact subset of $X$.  It follows that every nonempty open set $U\subseteq X$ has $\mu(U)>0$, and then the fact that $S$ satisfies $(R_{4})$ implies that there is a $g\in S$ such that $U\cap T^{g}U\neq \varnothing$.

The implication $(R_{5}) \implies (R_{6})$ is straightforward.

  \bibliographystyle{amsplain}
\frenchspacing
\bibliography{Recurrence_problems}

\providecommand{\bysame}{\leavevmode\hbox to3em{\hrulefill}\thinspace}
\providecommand{\MR}{\relax\ifhmode\unskip\space\fi MR }
\providecommand{\MRhref}[2]{%
  \href{http://www.ams.org/mathscinet-getitem?mr=#1}{#2}
}
\providecommand{\href}[2]{#2}
\begin{thebibliography}{10}

\bibitem{BeiglbockUltrafilter}
Mathias Beiglb{\"o}ck, \emph{An ultrafilter approach to {J}in's theorem},
  Israel J. Math. \textbf{185} (2011), 369--374.

\bibitem{BBF}
Mathias Beiglb{\"o}ck, Vitaly Bergelson, and Alexander Fish, \emph{Sumset
  phenomenon in countable amenable groups}, Adv. Math. \textbf{223} (2010),
  no.~2, 416--432.

\bibitem{BergelsonMcCutcheonRecurrence}
V.~Bergelson and R.~McCutcheon, \emph{Recurrence for semigroup actions and a
  non-commutative {S}chur theorem}, Topological dynamics and applications
  ({M}inneapolis, {MN}, 1995), Contemp. Math., vol. 215, Amer. Math. Soc.,
  Providence, RI, 1998, pp.~205--222.

\bibitem{BFW}
Vitaly Bergelson, Hillel Furstenberg, and Benjamin Weiss,
  \emph{Piecewise-{B}ohr sets of integers and combinatorial number theory},
  Topics in discrete mathematics, Algorithms Combin., vol.~26, Springer,
  Berlin, 2006, pp.~13--37.

\bibitem{BergelsonRuzsa}
Vitaly Bergelson and Imre~Z. Ruzsa, \emph{Sumsets in difference sets}, Israel
  J. Math. \textbf{174} (2009), 1--18.

\bibitem{BjorklundFish}
Michael Bj{\"o}rklund and Alexander Fish, \emph{Product set phenomena for
  countable groups}, Adv. Math. \textbf{275} (2015), 47--113.

\bibitem{BFPlunnecke}
\bysame, \emph{Pl{\"u}nnecke inequalities for countable abelian groups}, J.
  Reine Angew. Math. (To appear).

\bibitem{BoshernitzanGlasner}
Michael Boshernitzan and Eli Glasner, \emph{On two recurrence problems}, Fund.
  Math. \textbf{206} (2009), 113--130.

\bibitem{BKQW}
Michael Boshernitzan, Grigori Kolesnik, Anthony Quas, and M{\'a}t{\'e} Wierdl,
  \emph{Ergodic averaging sequences}, J. Anal. Math. \textbf{95} (2005),
  63--103. \MR{2145587}

\bibitem{Dinasso}
Mauro Di~Nasso, \emph{An elementary proof of {J}in's theorem with a bound},
  Electron. J. Combin. \textbf{21} (2014), no.~2, Paper 2.37, 7.

\bibitem{DinassoEtAl}
Mauro Di~Nasso, Isaac Goldbring, Renling Jin, Steven Leth, Martino Lupini, and
  Karl Mahlburg, \emph{High density piecewise syndeticity of sumsets}, Adv.
  Math. \textbf{278} (2015), 1--33.

\bibitem{Folner54}
Erling F{\o}lner, \emph{Generalization of a theorem of {B}ogolio\`uboff to
  topological abelian groups. {W}ith an appendix on {B}anach mean values in
  non-abelian groups}, Math. Scand. \textbf{2} (1954), 5--18.

\bibitem{ForrestPaper}
A.~H. Forrest, \emph{The construction of a set of recurrence which is not a set
  of strong recurrence}, Israel J. Math. \textbf{76} (1991), no.~1-2, 215--228.

\bibitem{ForrestThesis}
Alan~Hunter Forrest, \emph{Recurrence in dynamical systems: {A} combinatorial
  approach}, ProQuest LLC, Ann Arbor, MI, 1990, Thesis (Ph.D.)--The Ohio State
  University.

\bibitem{Furstenberg}
H.~Furstenberg, \emph{Recurrence in ergodic theory and combinatorial number
  theory}, Princeton University Press, Princeton, N.J., 1981, M. B. Porter
  Lectures.

\bibitem{RuzsaBook}
Alfred Geroldinger and Imre~Z. Ruzsa, \emph{Combinatorial number theory and
  additive group theory}, Advanced Courses in Mathematics. CRM Barcelona,
  Birkh\"auser Verlag, Basel, 2009, Courses and seminars from the DocCourse in
  Combinatorics and Geometry held in Barcelona, 2008.

\bibitem{GlasnerPolish}
Eli Glasner, \emph{On minimal actions of {P}olish groups}, Topology Appl.
  \textbf{85} (1998), no.~1-3, 119--125, 8th Prague Topological Symposium on
  General Topology and Its Relations to Modern Analysis and Algebra (1996).

\bibitem{Glasner}
\bysame, \emph{Ergodic theory via joinings}, Mathematical Surveys and
  Monographs, vol. 101, American Mathematical Society, Providence, RI, 2003.

\bibitem{GriesmerThesis}
J.~T. {Griesmer}, \emph{Ergodic averages, correlation sequences, and sumsets},
  ProQuest LLC, Ann Arbor, MI, 2009, Thesis (Ph.D.)--The Ohio State University.

\bibitem{GriesmerIsr}
\bysame, \emph{Sumsets of dense sets and sparse sets}, Israel J. Math.
  \textbf{190} (2012), 229--252.

\bibitem{GriesmerRRPD}
\bysame, \emph{{Recurrence, rigidity, and popular differences}}, ArXiv e-prints
  (2015).

\bibitem{Griesmer3Fold}
\bysame, \emph{{Bohr nieghborhoods in three-fold difference sets}}, ArXiv
  e-prints (2016).

\bibitem{GriesmerBohr}
\bysame, \emph{Bohr topology and difference sets for some abelian groups},
  ArXiv e-prints (2016).

\bibitem{HegyvariRuzsa}
Norbert Hegyv{\'a}ri and Imre~Z. Ruzsa, \emph{Additive structure of difference
  sets and a theorem of {F}\o lner}, Australas. J. Combin. \textbf{64} (2016),
  437--443.

\bibitem{HindmanStrauss}
Neil Hindman and Dona Strauss, \emph{Algebra in the {S}tone-\v {C}ech
  compactification}, de Gruyter Expositions in Mathematics, vol.~27, Walter de
  Gruyter \& Co., Berlin, 1998, Theory and applications.

\bibitem{JinSP}
Renling Jin, \emph{The sumset phenomenon}, Proc. Amer. Math. Soc. \textbf{130}
  (2002), no.~3, 855--861 (electronic).

\bibitem{KatznelsonStockholm}
Y.~Katznelson, \emph{Sequnces of integers dense in the {Bohr} group}, Proc.
  Roy. Inst. of Tech, (1973), 79--86.

\bibitem{KatznelsonChromatic}
\bysame, \emph{Chromatic numbers of {C}ayley graphs on {$\mathbb Z$} and
  recurrence}, Combinatorica \textbf{21} (2001), no.~2, 211--219, Paul
  Erd{\H{o}}s and his mathematics (Budapest, 1999).

\bibitem{Kleitman}
Daniel~J. Kleitman, \emph{On a combinatorial conjecture of {E}rd{\H o}s}, J.
  Combinatorial Theory \textbf{1} (1966), 209--214.

\bibitem{Kriz}
Igor K{\v{r}}{\'{\i}}{\v{z}}, \emph{Large independent sets in shift-invariant
  graphs: solution of {B}ergelson's problem}, Graphs Combin. \textbf{3} (1987),
  no.~2, 145--158.

\bibitem{Lovasz}
L.~Lov{\'a}sz, \emph{Kneser's conjecture, chromatic number, and homotopy}, J.
  Combin. Theory Ser. A \textbf{25} (1978), no.~3, 319--324.

\bibitem{McCutcheonAlexandria}
Randall McCutcheon, \emph{Three results in recurrence}, Ergodic theory and its
  connections with harmonic analysis ({A}lexandria, 1993), London Math. Soc.
  Lecture Note Ser., vol. 205, Cambridge Univ. Press, Cambridge, 1995,
  pp.~349--358.

\bibitem{McCutcheonBook}
\bysame, \emph{Elemental methods in ergodic {R}amsey theory}, Lecture Notes in
  Mathematics, vol. 1722, Springer-Verlag, Berlin, 1999.

\bibitem{RudinFourier}
Walter Rudin, \emph{Fourier analysis on groups}, Interscience Tracts in Pure
  and Applied Mathematics, No. 12, Interscience Publishers (a division of John
  Wiley and Sons), New York-London, 1962.

\bibitem{RuzsaComponents}
Imre~Z. Ruzsa, \emph{Essential components}, Proc. London Math. Soc. (3)
  \textbf{54} (1987), no.~1, 38--56.

\bibitem{RuzsaProgressions}
\bysame, \emph{Arithmetic progressions in sumsets}, Acta Arith. \textbf{60}
  (1991), no.~2, 191--202.

\bibitem{Saeki}
Sadahiro Saeki, \emph{Bohr compactification and continuous measures}, Proc.
  Amer. Math. Soc. \textbf{80} (1980), no.~2, 244--246.

\bibitem{Sarkozy}
A.~S{\'a}rk{\"o}zy, \emph{On difference sets of sequences of integers. {III}},
  Acta Math. Acad. Sci. Hungar. \textbf{31} (1978), no.~3-4, 355--386.
  \MR{487031}

\bibitem{Weiss}
Benjamin Weiss, \emph{Single orbit dynamics}, CBMS Regional Conference Series
  in Mathematics, vol.~95, American Mathematical Society, Providence, RI, 2000.
  \MR{1727510}

\bibitem{WolfPopular}
J.~Wolf, \emph{The structure of popular difference sets}, Israel J. Math.
  \textbf{179} (2010), 253--278.

\end{thebibliography}

\end{document}